
\documentclass{amsart}
\usepackage{amssymb}
\usepackage{amsfonts}
\usepackage{geometry}

\setcounter{MaxMatrixCols}{10}

\newtheorem{theorem}{Theorem}
\theoremstyle{plain}

\newtheorem{definition}{Definition}
\newtheorem{example}{Example}

\newtheorem{lemma}{Lemma}
\newtheorem{notation}{Notation}

\newtheorem{proposition}{Proposition}
\newtheorem{remark}{Remark}

\input{tcilatex}
\geometry{left=1in,right=1in,top=1in,bottom=1in}

\begin{document}
\title[Two weight boundedness]{A Good-$\lambda $ Lemma, two weight $T1$
theorems without weak boundedness, and a two weight accretive global $Tb$
theorem}
\author[E.T. Sawyer]{Eric T. Sawyer}
\address{ Department of Mathematics \& Statistics, McMaster University, 1280
Main Street West, Hamilton, Ontario, Canada L8S 4K1 }
\email{sawyer@mcmaster.ca}
\thanks{Research supported in part by NSERC}
\author[C.-Y. Shen]{Chun-Yen Shen}
\address{ Department of Mathematics \\
National Central University \\
Chungli, 32054, Taiwan }
\email{cyshen@math.ncu.edu.tw}
\thanks{C.-Y. Shen supported in part by the NSC, through grant NSC
104-2628-M-008 -003 -MY4}
\author[I. Uriarte-Tuero]{Ignacio Uriarte-Tuero}
\address{ Department of Mathematics \\
Michigan State University \\
East Lansing MI }
\email{ignacio@math.msu.edu}
\thanks{ I. Uriarte-Tuero has been partially supported by grants DMS-1056965
(US NSF), MTM2010-16232, MTM2015-65792-P (MINECO, Spain), and a Sloan
Foundation Fellowship. }
\date{September 12, 2016}

\begin{abstract}
Let $\sigma $ and $\omega $ be locally finite positive Borel measures on $%
\mathbb{R}^{n}$, let $T^{\alpha }$\ be a standard $\alpha $-fractional Calder%
\'{o}n-Zygmund operator on $\mathbb{R}^{n}$ with $0\leq \alpha <n$, and
assume as side conditions the $\mathcal{A}_{2}^{\alpha }$ conditions,
punctured $A_{2}^{\alpha }$ conditions, and certain $\alpha $\emph{-energy
conditions}. Then the weak boundedness property associated with the operator 
$T^{\alpha }$ and the weight pair $\left( \sigma ,\omega \right) $, is `good-%
$\lambda $' controlled by the testing conditions and the Muckenhoupt and
energy\ conditions. As a consequence, assuming the side conditions, we can
eliminate the weak boundedness property from Theorem 1 of \cite{SaShUr9} to
obtain that $T^{\alpha }$ is bounded from $L^{2}\left( \sigma \right) $ to $%
L^{2}\left( \omega \right) $ if and only if the testing conditions hold for $%
T^{\alpha }$\textbf{\ }and its dual. As a corollary we give a simple
derivation of a two weight accretive global $Tb$ theorem from a related $T1$
theorem. The role of two different parameterizations of the family of dyadic
grids, by scale and by translation, is highlighted in simultaneously
exploiting both goodness and NTV surgery with families of grids that are 
\emph{common} to both measures.
\end{abstract}

\maketitle
\tableofcontents

\begin{description}
\item[Dedication] This paper is dedicated to Dick Wheeden on the occasion of
his retirement from Rutgers University, and for all of his fundamental
contributions to the theory of weighted inequalities, in particular for the
beautiful paper of Hunt, Muckenhoupt and Wheeden that started it all back in
1973.
\end{description}

\section{Introduction}

The theory of weighted norm inequalities burst into the general mathematical
consciousness with the celebrated theorem of Hunt, Muckenhoupt and Wheeden 
\cite{HuMuWh} that extended boundedness of the Hilbert transform to measures
more general than Lebesgue's, namely showing that $H$ was bounded on the
weighted space $L^{2}\left( \mathbb{R}^{n};w\right) $ if and only if the $%
A_{2}$ condition of Muckenhoupt,%
\begin{equation*}
\left( \frac{1}{\left\vert Q\right\vert }\int_{Q}w\left( x\right) dx\right)
\left( \frac{1}{\left\vert Q\right\vert }\int_{Q}\frac{1}{w\left( x\right) }%
dx\right) \lesssim 1\ ,
\end{equation*}%
holds when taken uniformly over all cubes $Q$ in $\mathbb{R}^{n}$. The
ensuing thread of investigation culminated in the theorem of Coifman and
Fefferman \cite{CoFe}\ that characterized those nonnegative weights $w$ on $%
\mathbb{R}^{n}$ for which all of the `nicest' of the $L^{2}\left( \mathbb{R}%
^{n}\right) $ bounded singular integrals $T$ above are bounded on weighted
spaces $L^{2}\left( \mathbb{R}^{n};w\right) $, and does so in terms of the
above $A_{2}$ condition of Muckenhoupt.

Attention then turned to the corresponding two weight inequalities for
singular integrals, which turned out to be considerably more complicated.
For example, Cotlar and Sadosky gave a beautiful function theoretic
characterization of the weight pairs $\left( \sigma ,\omega \right) $ for
which $H$ is bounded from $L^{2}\left( \mathbb{R};\sigma \right) $ to $%
L^{2}\left( \mathbb{R};\omega \right) $, namely a two-weight extension of
the Helson-Szeg\"{o} theorem, which illuminated a deep connection between
two quite different function theoretic conditions, but failed to shed much
light on when either of them held\footnote{%
However, the testing conditions in Theorem \ref{Hilbert} are subject to the
same criticism due to the highly unstable nature of singular integrals
acting on measures.}. On the other hand, the two weight inequality for
positive fractional integrals, Poisson integrals and maximal functions were
characterized using testing conditions by one of us in \cite{Saw3} (see also 
\cite{Hyt2} for the Poisson inequality with `holes') and \cite{Saw1}, but
relying in a very strong way on the positivity of the kernel, something the
Hilbert kernel lacks. In a groundbreaking series of papers including \cite%
{NTV1},\cite{NTV2} and \cite{NTV3}, Nazarov, Treil and Volberg used weighted
Haar decompositions with random grids, introduced their `pivotal' condition,
and proved that the Hilbert transform is bounded from $L^{2}\left( \mathbb{R}%
;\sigma \right) $ to $L^{2}\left( \mathbb{R};\omega \right) $ if and only if
a variant of the $A_{2}$ condition `on steroids' held, and the norm
inequality and its dual held when tested locally over indicators of cubes -
but \textbf{only} under the side assumption that their pivotal conditions
held.

The last dozen years have seen a resurgence in the investigation of two
weight inequalities for singular integrals, beginning with the
aforementioned work of NTV, and due in part to applications of the two
weight $T1$ theorem in operator theory, such as in \cite{LaSaShUrWi}, where
embedding measures are characterized for model spaces $K_{\theta }$, where $%
\theta $ is an inner function on the disk, and where norms of composition
operators are characterized that map $K_{\theta }$ into Hardy and Bergman
spaces. A $T1$ theorem could also have implications for a number of problems
that are higher dimensional analogues of those connected to the Hilbert
transform (rank one perturbations \cite{Vol}, \cite{NiTr}); products of two
densely defined Toeplitz operators; subspaces of the Hardy space invariant
under the inverse shift operator \cite{Vol}, \cite{NaVo}; orthogonal
polynomials \cite{VoYu}, \cite{PeVoYu}; and quasiconformal theory \cite{IwMa}%
, \cite{LaSaUr}, \cite{AsGo}, \cite{AsZi}), and we refer the reader to \cite%
{SaShUr10} for more detail on these applications.

Following the groundbreaking work of Nazarov, Treil and Volberg, two of us,
Sawyer and Uriarte-Tuero, together with Lacey in \cite{LaSaUr2}, showed that
the pivotal conditions were not necessary in general, and introduced instead
a necessary `energy' condition as a substitute, along with a hybrid merging
of these two conditions that was shown to be sufficient for use as a side
condition. The resurgence was then capped along the way with a resolution -
involving the work of Nazarov, Treil and Volberg in \cite{NTV3}, the authors
and M. Lacey in the two part paper \cite{LaSaShUr3}, \cite{Lac} and T. Hyt%
\"{o}nen in \cite{Hyt2} - of the two weight Hilbert transform conjecture of
Nazarov, Treil and Volberg (\cite{Vol}):

\begin{theorem}
\label{Hilbert}The Hilbert transform is bounded from $L^{2}\left( \mathbb{R}%
;\sigma \right) $ to $L^{2}\left( \mathbb{R};\omega \right) $, i.e.%
\begin{equation}
\left\Vert H\left( f\sigma \right) \right\Vert _{L^{2}\left( \mathbb{R}%
;\omega \right) }\lesssim \left\Vert f\right\Vert _{L^{2}\left( \mathbb{R}%
;\sigma \right) },\ \ \ \ \ f\in L^{2}\left( \mathbb{R};\sigma \right) ,
\label{Hilbert'}
\end{equation}%
if and only if the two weight $A_{2}$ condition with holes holds,%
\begin{equation*}
\frac{\left\vert Q\right\vert _{\sigma }}{\left\vert Q\right\vert }\left( 
\frac{1}{\left\vert Q\right\vert }\int_{\mathbb{R}\setminus Q}\mathbf{s}%
_{Q}^{2}d\omega \left( x\right) \right) +\left( \frac{1}{\left\vert
Q\right\vert }\int_{\mathbb{R}\setminus Q}\mathbf{s}_{Q}^{2}d\sigma \left(
x\right) \right) \frac{\left\vert Q\right\vert _{\omega }}{\left\vert
Q\right\vert }\lesssim 1\ ,
\end{equation*}%
uniformly over all cubes $Q$, and the two testing conditions hold,%
\begin{eqnarray*}
\left\Vert \mathbf{1}_{Q}H\left( \mathbf{1}_{Q}\sigma \right) \right\Vert
_{L^{2}\left( \mathbb{R};\omega \right) } &\lesssim &\left\Vert \mathbf{1}%
_{Q}\right\Vert _{L^{2}\left( \mathbb{R};\sigma \right) }=\sqrt{\left\vert
Q\right\vert _{\sigma }}\ , \\
\left\Vert \mathbf{1}_{Q}H^{\ast }\left( \mathbf{1}_{Q}\omega \right)
\right\Vert _{L^{2}\left( \mathbb{R};\sigma \right) } &\lesssim &\left\Vert 
\mathbf{1}_{Q}\right\Vert _{L^{2}\left( \mathbb{R};\omega \right) }=\sqrt{%
\left\vert Q\right\vert _{\omega }}\ ,
\end{eqnarray*}%
uniformly over all cubes $Q$.
\end{theorem}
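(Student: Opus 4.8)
The plan is to establish the necessity and sufficiency directions separately, with the bulk of the work in sufficiency. For necessity, I would first observe that the two testing conditions are immediate consequences of the norm inequality \eqref{Hilbert'}: take $f = \mathbf{1}_Q$ and restrict the output to $Q$, and dually for $H^{\ast}$. The $A_2$ condition with holes is slightly more delicate; I would extract it by testing \eqref{Hilbert'} against $f = \mathbf{1}_Q \sigma$ and estimating $H(\mathbf{1}_Q\sigma)(x)$ from below on $\mathbb{R}\setminus Q$ using the fact that the Hilbert kernel $\frac{1}{x-y}$ has constant sign and size comparable to $\mathbf{s}_Q(x)^{-1}$ when $y\in Q$ and $x$ is at scale $\mathbf{s}_Q(x)$ from $Q$; combining with the dual test gives both terms. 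This part is standard and I do not expect obstacles here.

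For sufficiency, the strategy is the Nazarov--Treil--Volberg corona/random-grid method as refined through \cite{LaSaShUr3}, \cite{Lac} and \cite{Hyt2}. I would fix a pair of random dyadic grids and expand $f,g$ in weighted Haar bases adapted to $\sigma$ and $\omega$ respectively, reducing $\langle H(f\sigma),g\rangle_\omega$ to a bilinear form over pairs of dyadic cubes. Then I would split this form according to the relative position and size of the cubes: a \emph{diagonal/near-diagonal} piece, a \emph{disjoint} piece handled by the $A_2$-with-holes condition, and the main \emph{nested} piece. The nested piece is organized via a corona decomposition of $f$ with respect to the averages $\langle f\rangle_Q^\sigma$, and within each corona one performs the NTV paraproduct/stopping-time analysis. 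The long-range and neighbour terms are controlled by $A_2$ with holes; the paraproduct term is controlled directly by the testing constant; and the remaining \emph{stopping form} is the heart of the matter.

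The hard part will be the control of the stopping form — this is precisely where the energy hypothesis enters in the general $T1$ theory, but for the Hilbert transform on the line one has the crucial \emph{Energy Lemma} and, more importantly, the fact that the relevant energy conditions are \emph{implied} by $A_2$ with holes together with the testing conditions (this is the decisive one-dimensional phenomenon established in \cite{LaSaShUr3}). Thus I would: (i) prove the Energy Lemma bounding Haar-projection $\omega$-energies by the Poisson-type quantities appearing in the energy condition; (ii) verify the forward and backward energy conditions from the hypotheses, using the monotonicity properties special to the Hilbert kernel; and (iii) run the recursive stopping-time argument of Lacey \cite{Lac} to bound the stopping form by $\mathcal{N}_{H}^{1/2}$ times the testing and $A_2$ constants — this recursion, with its size and smoothness splittings, is the genuinely difficult step. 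Finally I would invoke a probabilistic (random grid) averaging to remove the "bad cube" error terms, using goodness to ensure that bad cubes contribute negligibly, and conclude \eqref{Hilbert'} with the asserted constant. The main obstacle throughout is step (iii): making the stopping form recursion converge, which requires the quasi-orthogonality of stopping cubes and a delicate interplay between the energy condition and the geometric decay of the Hilbert kernel's differences.
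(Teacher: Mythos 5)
The paper does not prove Theorem~\ref{Hilbert}. It is stated only as background, as the culmination of prior work, with explicit attribution to \cite{NTV3}, \cite{LaSaShUr3}, \cite{Lac}, and \cite{Hyt2}; there is no proof of it in this paper to compare your sketch against. What the present paper proves is a higher-dimensional Good-$\lambda$ lemma and its corollaries, taking results like Theorem~\ref{Hilbert} as given.

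That said, your outline is a faithful high-level description of the strategy actually used across the cited papers: random dyadic grids and weighted Haar expansion, splitting by relative size and position of cube pairs, corona decomposition, the Energy Lemma, the decisive one-dimensional fact (from \cite{LaSaShUr3}) that the forward and backward energy conditions follow from $A_2$ with holes plus testing, and Lacey's recursive size/smoothness decomposition of the stopping form in \cite{Lac}. You are also right that the stopping form recursion is where the genuine difficulty lies, and you are honest that you have not supplied it; as written, your step~(iii) is a black box that names the theorem rather than proving it, so the sketch is a road map rather than a proof. Two minor points in the necessity paragraph: you write ``testing \eqref{Hilbert'} against $f=\mathbf{1}_Q\sigma$'' where you mean $f=\mathbf{1}_Q\in L^2(\sigma)$, so that $H(f\sigma)=H(\mathbf{1}_Q\sigma)$; and the cleaner route to $A_2$ with holes, as the paper itself remarks immediately after the theorem statement, is to test directly against the indicators-with-tails $\mathbf{s}_Q$ rather than to extract the tail behavior from $H(\mathbf{1}_Q\sigma)$ on $\mathbb{R}\setminus Q$, though both work.
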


Here $Hf\left( x\right) =\int_{\mathbb{R}}\frac{f\left( y\right) }{y-x}dy$
is the Hilbert transform on the real line $\mathbb{R}$, and $\sigma $ and $%
\omega $ are locally finite positive Borel measures on $\mathbb{R}$. The two
weight $A_{2}$ condition with holes is also a testing condition in disguise,
in particular it follows from%
\begin{equation*}
\left\Vert H\left( \mathbf{s}_{Q}\sigma \right) \right\Vert _{L^{2}\left( 
\mathbb{R};\omega \right) }\lesssim \left\Vert \mathbf{s}_{Q}\right\Vert
_{L^{2}\left( \mathbb{R};\sigma \right) }\ ,
\end{equation*}%
tested over all `indicators with tails' $\mathbf{s}_{Q}\left( x\right) =%
\frac{\ell \left( Q\right) }{\ell \left( Q\right) +\left\vert
x-c_{Q}\right\vert }$ of intervals $Q$ in $\mathbb{R}$. Below we discuss the
precise interpretation of the above inequalities involving the singular
integral $H$.

At this juncture, attention naturally turned to the analogous two weight
inequalities for \emph{higher dimensional} singular integrals, as well as $%
\alpha $-fractional singular integrals such as the Cauchy transform in the
plane. A variety of results were obtained, e.g. \cite{SaShUr8}, \cite%
{LaSaShUrWi}, \cite{LaWi} and \cite{SaShUr9}, in which a $T1$ theorem was
proved under certain side conditions that implied the energy conditions.
However, in \cite{SaShUr10}, the authors have recently shown that the energy
conditions are \emph{not} in general necessary for elliptic singular
integrals.

The aforementioned higher dimensional results require refinements of the
various one-dimensional conditions associated with the norm inequalities,
namely the $A_{2}$ conditions, the testing conditions, the weak boundedness
property and energy conditions. The purpose of this paper is to prove in
higher dimensions that the weak boundedness constant $\mathcal{WBP}%
_{T^{\alpha }}\left( \sigma ,\omega \right) $ that is associated with an $%
\alpha $-fractional singular integral $T^{\alpha }$ and a weight pair $%
\left( \sigma ,\omega \right) $ in $\mathbb{R}^{n}$, is `good-$\lambda $'
controlled by the usual testing conditions $\mathfrak{T}_{T^{\alpha }}\left(
\sigma ,\omega \right) $, $\mathfrak{T}_{T^{\alpha }}^{\ast }\left( \sigma
,\omega \right) $ and two side conditions on weight pairs, namely the
Muckenhoupt conditions $\mathfrak{A}_{2}^{\alpha }\left( \sigma ,\omega
\right) $ and the energy conditions $\mathcal{E}_{\alpha }^{\limfunc{strong}%
}\left( \sigma ,\omega \right) $, $\mathcal{E}_{\alpha }^{\limfunc{strong}%
,\ast }\left( \sigma ,\omega \right) $: more precisely, for every $0<\lambda
<\frac{1}{2}$, we have the Good-$\lambda $\ Lemma:%
\begin{equation*}
\mathcal{WBP}_{T^{\alpha }}\left( \sigma ,\omega \right) \leq C_{\alpha
}\left( \frac{1}{\lambda }\sqrt{\mathfrak{A}_{2}^{\alpha }}+\mathfrak{T}%
_{T^{\alpha }}+\mathfrak{T}_{T^{\alpha }}^{\ast }+\mathcal{E}_{\alpha }^{%
\limfunc{strong}}+\mathcal{E}_{\alpha }^{\limfunc{strong},\ast }+\sqrt[4]{%
\lambda }\mathfrak{N}_{T^{\alpha }}\right) .
\end{equation*}%
The first instance of this type of conclusion appears in Lacey and Wick in 
\cite{LaWi}) - see Remark \ref{LacWic} in Subsection \ref{Sub cor}\ below.

Applications of the Good-$\lambda $\ Lemma are then given to obtain both $T1$
and $Tb$ theorems for two weights. We now turn to a description of the
higher dimensional conditions appearing in the above display. As the Good-$%
\lambda $ Lemma, along with its corollaries, hold in the more general
setting of quasicubes, we describe them first. But the reader interested
only in cubes can safely ignore this largely cosmetic generalization (but
crucial for our `measure on a curve' $T1$ theorem in \cite{SaShUr8}) by
simply deleting the prefix `quasi' wherever it appears.

\subsection{Quasicubes}

We begin by recalling the notion of quasicube used in \cite{SaShUr9} - a
special case of the classical notion used in quasiconformal theory.

\begin{definition}
We say that a homeomorphism $\Omega :\mathbb{R}^{n}\rightarrow \mathbb{R}%
^{n} $ is a globally biLipschitz map if%
\begin{equation}
\left\Vert \Omega \right\Vert _{Lip}\equiv \sup_{x,y\in \mathbb{R}^{n}}\frac{%
\left\Vert \Omega \left( x\right) -\Omega \left( y\right) \right\Vert }{%
\left\Vert x-y\right\Vert }<\infty ,  \label{rigid}
\end{equation}%
and $\left\Vert \Omega ^{-1}\right\Vert _{Lip}<\infty $.
\end{definition}

\begin{notation}
We define $\mathcal{P}^{n}$ to be the collection of half open, half closed
cubes in $\mathbb{R}^{n}$ with sides parallel to the coordinate axes. A half
open, half closed cube $Q$ in $\mathbb{R}^{n}$ has the form $Q=Q\left(
c,\ell \right) \equiv \dprod\limits_{k=1}^{n}\left[ c_{k}-\frac{\ell }{2}%
,c_{k}+\frac{\ell }{2}\right) $ for some $\ell >0$ and $c=\left(
c_{1},...,c_{n}\right) \in \mathbb{R}^{n}$. The cube $Q\left( c,\ell \right) 
$ is described as having center $c$ and sidelength $\ell $.
\end{notation}

\begin{definition}
Suppose that $\Omega :\mathbb{R}^{n}\rightarrow \mathbb{R}^{n}$ is a
globally biLipschitz map.

\begin{enumerate}
\item If $E$ is a measurable subset of $\mathbb{R}^{n}$, we define $\Omega
E\equiv \left\{ \Omega \left( x\right) :x\in E\right\} $ to be the image of $%
E$ under the homeomorphism $\Omega $.

\begin{enumerate}
\item In the special case that $E=Q$ is a cube in $\mathbb{R}^{n}$, we will
refer to $\Omega Q$ as a quasicube (or $\Omega $-quasicube if $\Omega $ is
not clear from the context).

\item We define the center $c_{\Omega Q}=c\left( \Omega Q\right) $ of the
quasicube $\Omega Q$ to be the point $\Omega c_{Q}$ where $c_{Q}=c\left(
Q\right) $ is the center of $Q$.

\item We define the side length $\ell \left( \Omega Q\right) $ of the
quasicube $\Omega Q$ to be the sidelength $\ell \left( Q\right) $ of the
cube $Q$.

\item For $r>0$ we define the `dilation' $r\Omega Q$ of a quasicube $\Omega
Q $ to be $\Omega rQ$ where $rQ$ is the usual `dilation' of a cube in $%
\mathbb{R}^{n}$ that is concentric with $Q$ and having side length $r\ell
\left( Q\right) $.
\end{enumerate}

\item If $\mathcal{K}$ is a collection of cubes in $\mathbb{R}^{n}$, we
define $\Omega \mathcal{K}\equiv \left\{ \Omega Q:Q\in \mathcal{K}\right\} $
to be the collection of quasicubes $\Omega Q$ as $Q$ ranges over $\mathcal{K}
$.

\item If $\mathcal{F}$ is a grid of cubes in $\mathbb{R}^{n}$, we define the
inherited quasigrid structure on $\Omega \mathcal{F}$ by declaring that $%
\Omega Q$ is a child of $\Omega Q^{\prime }$ in $\Omega \mathcal{F}$ if $Q$
is a child of $Q^{\prime }$ in the grid $\mathcal{F}$.
\end{enumerate}
\end{definition}

Note that if $\Omega Q$ is a quasicube, then $\left\vert \Omega Q\right\vert
^{\frac{1}{n}}\approx \left\vert Q\right\vert ^{\frac{1}{n}}=\ell \left(
Q\right) =\ell \left( \Omega Q\right) $. For a quasicube $J=\Omega Q$, we
will generally use the expression $\left\vert J\right\vert ^{\frac{1}{n}}$
in the various estimates arising in the proofs below, but will often use $%
\ell \left( J\right) $ when defining collections of quasicubes. Moreover,
there are constants $R_{big}$ and $R_{small}$ such that we have the
comparability containments%
\begin{equation*}
Q+\Omega x_{Q}\subset R_{big}\Omega Q\text{ and }R_{small}\Omega Q\subset
Q+\Omega x_{Q}\ .
\end{equation*}

\begin{example}
\label{wild}Quasicubes can be wildly shaped, as illustrated by the standard
example of a logarithmic spiral in the plane $f_{\varepsilon }\left(
z\right) =z\left\vert z\right\vert ^{2\varepsilon i}=ze^{i\varepsilon \ln
\left( z\overline{z}\right) }$. Indeed, $f_{\varepsilon }:\mathbb{%
C\rightarrow C}$ is a globally biLipschitz map with Lipschitz constant $%
1+C\varepsilon $ since $f_{\varepsilon }^{-1}\left( w\right) =w\left\vert
w\right\vert ^{-2\varepsilon i}$ and%
\begin{equation*}
\nabla f_{\varepsilon }=\left( \frac{\partial f_{\varepsilon }}{\partial z},%
\frac{\partial f_{\varepsilon }}{\partial \overline{z}}\right) =\left(
\left\vert z\right\vert ^{2\varepsilon i}+i\varepsilon \left\vert
z\right\vert ^{2\varepsilon i},i\varepsilon \frac{z}{\overline{z}}\left\vert
z\right\vert ^{2\varepsilon i}\right) .
\end{equation*}%
On the other hand, $f_{\varepsilon }$ behaves wildly at the origin since the
image of the closed unit interval on the real line under $f_{\varepsilon }$
is an infinite logarithmic spiral.
\end{example}

\subsection{Standard fractional singular integrals and the norm inequality}

Let $0\leq \alpha <n$. We define a standard $\alpha $-fractional CZ kernel $%
K^{\alpha }(x,y)$ to be a real-valued function defined on $\mathbb{R}%
^{n}\times \mathbb{R}^{n}$ satisfying the following fractional size and
smoothness conditions of order $1+\delta $ for some $\delta >0$: For $x\neq
y $,%
\begin{eqnarray}
\left\vert K^{\alpha }\left( x,y\right) \right\vert &\leq &C_{CZ}\left\vert
x-y\right\vert ^{\alpha -n}\text{ and }\left\vert \nabla K^{\alpha }\left(
x,y\right) \right\vert \leq C_{CZ}\left\vert x-y\right\vert ^{\alpha -n-1},
\label{sizeandsmoothness'} \\
\left\vert \nabla K^{\alpha }\left( x,y\right) -\nabla K^{\alpha }\left(
x^{\prime },y\right) \right\vert &\leq &C_{CZ}\left( \frac{\left\vert
x-x^{\prime }\right\vert }{\left\vert x-y\right\vert }\right) ^{\delta
}\left\vert x-y\right\vert ^{\alpha -n-1},\ \ \ \ \ \frac{\left\vert
x-x^{\prime }\right\vert }{\left\vert x-y\right\vert }\leq \frac{1}{2}, 
\notag
\end{eqnarray}%
and the last inequality also holds for the adjoint kernel in which $x$ and $%
y $ are interchanged. We note that a more general definition of kernel has
only order of smoothness $\delta >0$, rather than $1+\delta $, but the use
of the Monotonicity and Energy Lemmas in arguments below, which involve
first order Taylor approximations to the kernel functions $K^{\alpha }\left(
\cdot ,y\right) $, requires order of smoothness more than $1$ to handle
remainder terms.

\subsubsection{Defining the norm inequality}

We now turn to a precise definition of the weighted norm inequality%
\begin{equation}
\left\Vert T_{\sigma }^{\alpha }f\right\Vert _{L^{2}\left( \omega \right)
}\leq \mathfrak{N}_{T_{\sigma }^{\alpha }}\left\Vert f\right\Vert
_{L^{2}\left( \sigma \right) },\ \ \ \ \ f\in L^{2}\left( \sigma \right) .
\label{two weight'}
\end{equation}%
For this we introduce a family $\left\{ \eta _{\delta ,R}^{\alpha }\right\}
_{0<\delta <R<\infty }$ of nonnegative functions on $\left[ 0,\infty \right) 
$ so that the truncated kernels $K_{\delta ,R}^{\alpha }\left( x,y\right)
=\eta _{\delta ,R}^{\alpha }\left( \left\vert x-y\right\vert \right)
K^{\alpha }\left( x,y\right) $ are bounded with compact support for fixed $x$
or $y$. Then the truncated operators 
\begin{equation*}
T_{\sigma ,\delta ,R}^{\alpha }f\left( x\right) \equiv \int_{\mathbb{R}%
^{n}}K_{\delta ,R}^{\alpha }\left( x,y\right) f\left( y\right) d\sigma
\left( y\right) ,\ \ \ \ \ x\in \mathbb{R}^{n},
\end{equation*}%
are pointwise well-defined, and we will refer to the pair $\left( K^{\alpha
},\left\{ \eta _{\delta ,R}^{\alpha }\right\} _{0<\delta <R<\infty }\right) $
as an $\alpha $-fractional singular integral operator, which we typically
denote by $T^{\alpha }$, suppressing the dependence on the truncations.

\begin{definition}
We say that an $\alpha $-fractional singular integral operator $T^{\alpha
}=\left( K^{\alpha },\left\{ \eta _{\delta ,R}^{\alpha }\right\} _{0<\delta
<R<\infty }\right) $ satisfies the norm inequality (\ref{two weight'})
provided%
\begin{equation*}
\left\Vert T_{\sigma ,\delta ,R}^{\alpha }f\right\Vert _{L^{2}\left( \omega
\right) }\leq \mathfrak{N}_{T_{\sigma }^{\alpha }}\left\Vert f\right\Vert
_{L^{2}\left( \sigma \right) },\ \ \ \ \ f\in L^{2}\left( \sigma \right)
,0<\delta <R<\infty .
\end{equation*}
\end{definition}

It turns out that, in the presence of Muckenhoupt conditions, the norm
inequality (\ref{two weight'}) is essentially independent of the choice of
truncations used, and we now explain this in some detail. A \emph{smooth
truncation} of $T^{\alpha }$ has kernel $\eta _{\delta ,R}\left( \left\vert
x-y\right\vert \right) K^{\alpha }\left( x,y\right) $ for a smooth function $%
\eta _{\delta ,R}$ compactly supported in $\left( \delta ,R\right) $, $%
0<\delta <R<\infty $, and satisfying standard CZ estimates. A typical
example of an $\alpha $-fractional transform is the $\alpha $-fractional 
\emph{Riesz} vector of operators%
\begin{equation*}
\mathbf{R}^{\alpha ,n}=\left\{ R_{\ell }^{\alpha ,n}:1\leq \ell \leq
n\right\} ,\ \ \ \ \ 0\leq \alpha <n.
\end{equation*}%
The Riesz transforms $R_{\ell }^{n,\alpha }$ are convolution fractional
singular integrals $R_{\ell }^{n,\alpha }f\equiv K_{\ell }^{n,\alpha }\ast f$
with odd kernel defined by%
\begin{equation*}
K_{\ell }^{\alpha ,n}\left( w\right) \equiv \frac{w^{\ell }}{\left\vert
w\right\vert ^{n+1-\alpha }}\equiv \frac{\Omega _{\ell }\left( w\right) }{%
\left\vert w\right\vert ^{n-\alpha }},\ \ \ \ \ w=\left(
w^{1},...,w^{n}\right) .
\end{equation*}

However, in dealing with energy considerations, and in particular in the
Monotonicity Lemma below where first order Taylor approximations are made on
the truncated kernels, it is necessary to use the \emph{tangent line
truncation} of\emph{\ }the Riesz transform $R_{\ell }^{\alpha ,n}$ whose
kernel is defined to be $\Omega _{\ell }\left( w\right) \psi _{\delta
,R}^{\alpha }\left( \left\vert w\right\vert \right) $ where $\psi _{\delta
,R}^{\alpha }$ is continuously differentiable on an interval $\left(
0,S\right) $ with $0<\delta <R<S$, and where $\psi _{\delta ,R}^{\alpha
}\left( r\right) =r^{\alpha -n}$ if $\delta \leq r\leq R$, and has constant
derivative on both $\left( 0,\delta \right) $ and $\left( R,S\right) $ where 
$\psi _{\delta ,R}^{\alpha }\left( S\right) =0$. Here $S$ is uniquely
determined by $R$ and $\alpha $. Finally we set $\psi _{\delta ,R}^{\alpha
}\left( S\right) =0$ as well, so that the kernel vanishes on the diagonal
and common point masses do not `see' each other. Note also that the tangent
line extension of a $C^{1,\delta }$ function on the line is again $%
C^{1,\delta }$ with no increase in the $C^{1,\delta }$ norm.

It was shown in the one dimensional case with no common point masses in \cite%
{LaSaShUr3}, that boundedness of the Hilbert transform $H$ with one set of
appropriate truncations together with the $A_{2}^{\alpha }$ condition
without holes, is equivalent to boundedness of $H$ with any other set of
appropriate truncations, and this was extended to $\mathbf{R}^{\alpha ,n}$
and more general operators in higher dimensions, permitting common point
masses as well. Thus we are free to use the tangent line truncations
throughout the proofs of our results.

\subsection{Quasicube testing conditions}

The following `dual' quasicube testing conditions are necessary for the
boundedness of $T^{\alpha }$ from $L^{2}\left( \sigma \right) $ to $%
L^{2}\left( \omega \right) $,%
\begin{eqnarray*}
\mathfrak{T}_{T^{\alpha }}^{2} &\equiv &\sup_{Q\in \Omega \mathcal{P}^{n}}%
\frac{1}{\left\vert Q\right\vert _{\sigma }}\int_{Q}\left\vert T^{\alpha
}\left( \mathbf{1}_{Q}\sigma \right) \right\vert ^{2}\omega <\infty , \\
\left( \mathfrak{T}_{T^{\alpha }}^{\ast }\right) ^{2} &\equiv &\sup_{Q\in
\Omega \mathcal{P}^{n}}\frac{1}{\left\vert Q\right\vert _{\omega }}%
\int_{Q}\left\vert \left( T^{\alpha }\right) ^{\ast }\left( \mathbf{1}%
_{Q}\omega \right) \right\vert ^{2}\sigma <\infty ,
\end{eqnarray*}%
and where we interpret the right sides as holding uniformly over all tangent
line truncations of $T^{\alpha }$. Equally necessary are the following
`full' testing conditions where the integrations are taken over the entire
space $\mathbb{R}^{n}$:%
\begin{eqnarray*}
\mathfrak{FT}_{T^{\alpha }}^{2} &\equiv &\sup_{Q\in \Omega \mathcal{P}^{n}}%
\frac{1}{\left\vert Q\right\vert _{\sigma }}\int_{\mathbb{R}^{n}}\left\vert
T^{\alpha }\left( \mathbf{1}_{Q}\sigma \right) \right\vert ^{2}\omega
<\infty , \\
\left( \mathfrak{FT}_{T^{\alpha }}^{\ast }\right) ^{2} &\equiv &\sup_{Q\in
\Omega \mathcal{P}^{n}}\frac{1}{\left\vert Q\right\vert _{\omega }}\int_{%
\mathbb{R}^{n}}\left\vert \left( T^{\alpha }\right) ^{\ast }\left( \mathbf{1}%
_{Q}\omega \right) \right\vert ^{2}\sigma <\infty ,
\end{eqnarray*}

\subsection{Quasiweak boundedness and indicator/touching property}

The quasiweak boundedness property for $T^{\alpha }$ with constant $C$ is
given by 
\begin{eqnarray}
&&\left\vert \int_{Q}T^{\alpha }\left( 1_{Q^{\prime }}\sigma \right) d\omega
\right\vert \leq \mathcal{WBP}_{T^{\alpha }}\sqrt{\left\vert Q\right\vert
_{\omega }\left\vert Q^{\prime }\right\vert _{\sigma }},  \label{def WBP} \\
&&\ \ \ \ \ \text{for all quasicubes }Q,Q^{\prime }\text{ with }\frac{1}{C}%
\leq \frac{\ell \left( Q\right) }{\ell \left( Q^{\prime }\right) }\leq C, 
\notag \\
&&\ \ \ \ \ \text{and either }Q\subset 3Q^{\prime }\setminus Q^{\prime }%
\text{ or }Q^{\prime }\subset 3Q\setminus Q,  \notag
\end{eqnarray}%
and where we interpret the left side above as holding uniformly over all
tangent line trucations of $T^{\alpha }$. This condition is used in our $T1$
theorem with an energy side condition in \cite{SaShUr9}, but will be removed
in our $T1$ theorem with an energy side condition obtained here as a
corollary of the Good-$\lambda $ Lemma.

We say that two quasicubes $Q$ and $Q^{\prime }$ in $\Omega \mathcal{P}^{n}$
are \emph{touching quasicubes} if the intersection of their closures is
nonempty and contained in the boundary of the larger quasicube. Finally, let 
$\mathfrak{I}_{T^{\alpha }}=\mathfrak{I}_{T^{\alpha }}\left( \sigma ,\omega
\right) $ be the best constant in the \emph{indicator/touching }inequality%
\begin{eqnarray}
&&\left\vert \mathcal{T}^{\alpha }\left( \mathbf{1}_{Q},\mathbf{1}%
_{Q^{\prime }}\right) \right\vert \leq \mathfrak{I}_{T^{\alpha }}\left(
\sigma ,\omega \right) \left\Vert \mathbf{1}_{Q}\right\Vert _{L^{2}\left(
\sigma \right) }\left\Vert \mathbf{1}_{Q^{\prime }}\right\Vert _{L^{2}\left(
\omega \right) }\ ,  \label{Ind/touch} \\
&&\ \ \ \ \ \text{for all touching quasicubes }Q,Q^{\prime }\in \mathcal{P}%
^{n},  \notag \\
&&\ \ \ \ \ \text{with }\frac{1}{C}\leq \frac{\ell \left( Q\right) }{\ell
\left( Q^{\prime }\right) }\leq C,  \notag \\
&&\ \ \ \ \ \text{and either }Q\subset 3Q^{\prime }\setminus Q^{\prime }%
\text{ or }Q^{\prime }\subset 3Q\setminus Q.  \notag
\end{eqnarray}

\subsection{Poisson integrals and $\mathcal{A}_{2}^{\protect\alpha }$}

Let $\mu $ be a locally finite positive Borel measure on $\mathbb{R}^{n}$,
and suppose $Q$ is an $\Omega $-quasicube in $\mathbb{R}^{n}$. Recall that $%
\left\vert Q\right\vert ^{\frac{1}{n}}\approx \ell \left( Q\right) $ for a
quasicube $Q$. The two $\alpha $-fractional Poisson integrals of $\mu $ on a
quasicube $Q$ are given by:%
\begin{eqnarray*}
\mathrm{P}^{\alpha }\left( Q,\mu \right) &\equiv &\int_{\mathbb{R}^{n}}\frac{%
\left\vert Q\right\vert ^{\frac{1}{n}}}{\left( \left\vert Q\right\vert ^{%
\frac{1}{n}}+\left\vert x-x_{Q}\right\vert \right) ^{n+1-\alpha }}d\mu
\left( x\right) , \\
\mathcal{P}^{\alpha }\left( Q,\mu \right) &\equiv &\int_{\mathbb{R}%
^{n}}\left( \frac{\left\vert Q\right\vert ^{\frac{1}{n}}}{\left( \left\vert
Q\right\vert ^{\frac{1}{n}}+\left\vert x-x_{Q}\right\vert \right) ^{2}}%
\right) ^{n-\alpha }d\mu \left( x\right) ,
\end{eqnarray*}%
where we emphasize that $\left\vert x-x_{Q}\right\vert $ denotes Euclidean
distance between $x$ and $x_{Q}$ and $\left\vert Q\right\vert $ denotes the
Lebesgue measure of the quasicube $Q$. We refer to $\mathrm{P}^{\alpha }$ as
the \emph{standard} Poisson integral and to $\mathcal{P}^{\alpha }$ as the 
\emph{reproducing} Poisson integral.

We say that the pair $K,K^{\prime }$ in $\mathcal{P}^{n}$ are \emph{%
neighbours} if $K$ and $K^{\prime }$ live in a common dyadic grid and both $%
K\subset 3K^{\prime }\setminus K^{\prime }$ and $K^{\prime }\subset
3K\setminus K$, and we denote by $\mathcal{N}^{n}$ the set of pairs $\left(
K,K^{\prime }\right) $ in $\mathcal{P}^{n}\times \mathcal{P}^{n}$ that are
neighbours. Let 
\begin{equation*}
\Omega \mathcal{N}^{n}=\left\{ \left( \Omega K,\Omega K^{\prime }\right)
:\left( K,K^{\prime }\right) \in \mathcal{N}^{n}\right\}
\end{equation*}%
be the corresponding collection of neighbour pairs of quasicubes. Let $%
\sigma $ and $\omega $ be locally finite positive Borel measures on $\mathbb{%
R}^{n}$, and suppose $0\leq \alpha <n$. Then we define the classical \emph{%
offset }$A_{2}^{\alpha }$\emph{\ constants} by 
\begin{equation}
A_{2}^{\alpha }\left( \sigma ,\omega \right) \equiv \sup_{\left( Q,Q^{\prime
}\right) \in \Omega \mathcal{N}^{n}}\frac{\left\vert Q\right\vert _{\sigma }%
}{\left\vert Q\right\vert ^{1-\frac{\alpha }{n}}}\frac{\left\vert Q^{\prime
}\right\vert _{\omega }}{\left\vert Q\right\vert ^{1-\frac{\alpha }{n}}}.
\label{offset A2}
\end{equation}%
Since the cubes in $\mathcal{P}^{n}$ are products of half open, half closed
intervals $\left[ a,b\right) $, the neighbouring quasicubes $\left(
Q,Q^{\prime }\right) \in \Omega \mathcal{N}^{n}$ are disjoint, and any
common point masses of $\sigma $ and $\omega $ do not simultaneously appear
in each factor.

We now define the \emph{one-tailed} $\mathcal{A}_{2}^{\alpha }$ constant
using $\mathcal{P}^{\alpha }$. The energy constants $\mathcal{E}_{\alpha }^{%
\limfunc{strong}}$ introduced below will use the standard Poisson integral $%
\mathrm{P}^{\alpha }$.

\begin{definition}
The one-tailed constants $\mathcal{A}_{2}^{\alpha }$ and $\mathcal{A}%
_{2}^{\alpha ,\ast }$ for the weight pair $\left( \sigma ,\omega \right) $
are given by%
\begin{eqnarray*}
\mathcal{A}_{2}^{\alpha } &\equiv &\sup_{Q\in \Omega \mathcal{P}^{n}}%
\mathcal{P}^{\alpha }\left( Q,\mathbf{1}_{Q^{c}}\sigma \right) \frac{%
\left\vert Q\right\vert _{\omega }}{\left\vert Q\right\vert ^{1-\frac{\alpha 
}{n}}}<\infty , \\
\mathcal{A}_{2}^{\alpha ,\ast } &\equiv &\sup_{Q\in \Omega \mathcal{P}^{n}}%
\mathcal{P}^{\alpha }\left( Q,\mathbf{1}_{Q^{c}}\omega \right) \frac{%
\left\vert Q\right\vert _{\sigma }}{\left\vert Q\right\vert ^{1-\frac{\alpha 
}{n}}}<\infty .
\end{eqnarray*}
\end{definition}

Note that these definitions are the analogues of the corresponding
conditions with `holes' introduced by Hyt\"{o}nen \cite{Hyt2} in dimension $%
n=1$ - the supports of the measures $\mathbf{1}_{Q^{c}}\sigma $ and $\mathbf{%
1}_{Q}\omega $ in the definition of $\mathcal{A}_{2}^{\alpha }$ are
disjoint, and so the common point masses of $\sigma $ and $\omega $ do not
appear simultaneously in each factor. Note also that, unlike in \cite%
{SaShUr5}, where common point masses were not permitted, we can no longer
assert the equivalence of $\mathcal{A}_{2}^{\alpha }$ with holes taken over 
\emph{quasicubes} with $\mathcal{A}_{2}^{\alpha }$ with holes taken over 
\emph{cubes}.

\subsubsection{Punctured $A_{2}^{\protect\alpha }$ conditions}

The \emph{classical} $A_{2}^{\alpha }$ characteristic $\sup_{Q\in \Omega 
\mathcal{Q}^{n}}\frac{\left\vert Q\right\vert _{\omega }}{\left\vert
Q\right\vert ^{1-\frac{\alpha }{n}}}\frac{\left\vert Q\right\vert _{\sigma }%
}{\left\vert Q\right\vert ^{1-\frac{\alpha }{n}}}$ fails to be finite when
the measures $\sigma $ and $\omega $ have a common point mass - simply let $%
Q $ in the $\sup $ above shrink to a common mass point. But there is a
substitute that is quite similar in character that is motivated by the fact
that for large quasicubes $Q$, the $\sup $ above is problematic only if just 
\emph{one} of the measures is \emph{mostly} a point mass when restricted to $%
Q$.

Given an at most countable set $\mathfrak{P}=\left\{ p_{k}\right\}
_{k=1}^{\infty }$ in $\mathbb{R}^{n}$, a quasicube $Q\in \Omega \mathcal{P}%
^{n}$, and a locally finite positive Borel measure $\mu $, define as in \cite%
{SaShUr9}, 
\begin{equation*}
\mu \left( Q,\mathfrak{P}\right) \equiv \left\vert Q\right\vert _{\mu }-\sup
\left\{ \mu \left( p_{k}\right) :p_{k}\in Q\cap \mathfrak{P}\right\} ,
\end{equation*}%
where the supremum is actually achieved since $\sum_{p_{k}\in Q\cap 
\mathfrak{P}}\mu \left( p_{k}\right) <\infty $ as $\mu $ is locally finite.
The quantity $\mu \left( Q,\mathfrak{P}\right) $ is simply the $\widetilde{%
\mu }$ measure of $Q$ where $\widetilde{\mu }$ is the measure $\mu $ with
its largest point mass from $\mathfrak{P}$ in $Q$ removed. Given a locally
finite measure pair $\left( \sigma ,\omega \right) $, let $\mathfrak{P}%
_{\left( \sigma ,\omega \right) }=\left\{ p_{k}\right\} _{k=1}^{\infty }$ be
the at most countable set of common point masses of $\sigma $ and $\omega $.
Then the weighted norm inequality (\ref{two weight'}) typically implies
finiteness of the following \emph{punctured} Muckenhoupt conditions (see 
\cite{SaShUr9}):%
\begin{eqnarray*}
A_{2}^{\alpha ,\limfunc{punct}}\left( \sigma ,\omega \right) &\equiv
&\sup_{Q\in \Omega \mathcal{P}^{n}}\frac{\omega \left( Q,\mathfrak{P}%
_{\left( \sigma ,\omega \right) }\right) }{\left\vert Q\right\vert ^{1-\frac{%
\alpha }{n}}}\frac{\left\vert Q\right\vert _{\sigma }}{\left\vert
Q\right\vert ^{1-\frac{\alpha }{n}}}, \\
A_{2}^{\alpha ,\ast ,\limfunc{punct}}\left( \sigma ,\omega \right) &\equiv
&\sup_{Q\in \Omega \mathcal{P}^{n}}\frac{\left\vert Q\right\vert _{\omega }}{%
\left\vert Q\right\vert ^{1-\frac{\alpha }{n}}}\frac{\sigma \left( Q,%
\mathfrak{P}_{\left( \sigma ,\omega \right) }\right) }{\left\vert
Q\right\vert ^{1-\frac{\alpha }{n}}}.
\end{eqnarray*}

Now we turn to the definition of a quasiHaar basis of $L^{2}\left( \mu
\right) $.

\subsection{A weighted quasiHaar basis}

We will use a construction of a quasiHaar basis in $\mathbb{R}^{n}$ that is
adapted to a measure $\mu $ (c.f. \cite{NTV2} for the nonquasi case). Given
a dyadic quasicube $Q\in \Omega \mathcal{D}$, where $\mathcal{D}$ is a
dyadic grid of cubes from $\mathcal{P}^{n}$, let $\bigtriangleup _{Q}^{\mu }$
denote orthogonal projection onto the finite dimensional subspace $%
L_{Q}^{2}\left( \mu \right) $ of $L^{2}\left( \mu \right) $ that consists of
linear combinations of the indicators of\ the children $\mathfrak{C}\left(
Q\right) $ of $Q$ that have $\mu $-mean zero over $Q$:%
\begin{equation*}
L_{Q}^{2}\left( \mu \right) \equiv \left\{ f=\dsum\limits_{Q^{\prime }\in 
\mathfrak{C}\left( Q\right) }a_{Q^{\prime }}\mathbf{1}_{Q^{\prime
}}:a_{Q^{\prime }}\in \mathbb{R},\int_{Q}fd\mu =0\right\} .
\end{equation*}%
Then we have the important telescoping property for dyadic quasicubes $%
Q_{1}\subset Q_{2}$ that arises from the martingale differences associated
with the projections $\bigtriangleup _{Q}^{\mu }$:%
\begin{equation}
\mathbf{1}_{Q_{0}}\left( x\right) \left( \dsum\limits_{Q\in \left[
Q_{1},Q_{2}\right] }\bigtriangleup _{Q}^{\mu }f\left( x\right) \right) =%
\mathbf{1}_{Q_{0}}\left( x\right) \left( \mathbb{E}_{Q_{0}}^{\mu }f-\mathbb{E%
}_{Q_{2}}^{\mu }f\right) ,\ \ \ \ \ Q_{0}\in \mathfrak{C}\left( Q_{1}\right)
,\ f\in L^{2}\left( \mu \right) .  \label{telescope}
\end{equation}%
We will at times find it convenient to use a fixed orthonormal basis $%
\left\{ h_{Q}^{\mu ,a}\right\} _{a\in \Gamma _{n}}$ of $L_{Q}^{2}\left( \mu
\right) $ where $\Gamma _{n}\equiv \left\{ 0,1\right\} ^{n}\setminus \left\{ 
\mathbf{1}\right\} $ is a convenient index set with $\mathbf{1}=\left(
1,1,...,1\right) $. Then $\left\{ h_{Q}^{\mu ,a}\right\} _{a\in \Gamma _{n}%
\text{ and }Q\in \Omega \mathcal{D}}$ is an orthonormal basis for $%
L^{2}\left( \mu \right) $, with the understanding that we add the constant
function $\mathbf{1}$ if $\mu $ is a finite measure. In particular we have%
\begin{equation*}
\left\Vert f\right\Vert _{L^{2}\left( \mu \right) }^{2}=\sum_{Q\in \Omega 
\mathcal{D}}\left\Vert \bigtriangleup _{Q}^{\mu }f\right\Vert _{L^{2}\left(
\mu \right) }^{2}=\sum_{Q\in \Omega \mathcal{D}}\sum_{a\in \Gamma
_{n}}\left\vert \widehat{f}\left( Q\right) \right\vert ^{2},\ \ \ \ \
\left\vert \widehat{f}\left( Q\right) \right\vert ^{2}\equiv \sum_{a\in
\Gamma _{n}}\left\vert \left\langle f,h_{Q}^{\mu ,a}\right\rangle _{\mu
}\right\vert ^{2},
\end{equation*}%
where the measure is suppressed in the notation $\widehat{f}$. Indeed, this
follows from (\ref{telescope}) and Lebesgue's differentiation theorem for
quasicubes. We also record the following useful estimate. If $I^{\prime }$
is any of the $2^{n}$ $\Omega \mathcal{D}$-children of $I$, and $a\in \Gamma
_{n}$, then 
\begin{equation}
\left\vert \mathbb{E}_{I^{\prime }}^{\mu }h_{I}^{\mu ,a}\right\vert \leq 
\sqrt{\mathbb{E}_{I^{\prime }}^{\mu }\left( h_{I}^{\mu ,a}\right) ^{2}}\leq 
\frac{1}{\sqrt{\left\vert I^{\prime }\right\vert _{\mu }}}.
\label{useful Haar}
\end{equation}

\subsection{The strong quasienergy conditions}

Given a dyadic quasicube $K\in \Omega \mathcal{D}$ and a positive measure $%
\mu $ we define the quasiHaar projection $\mathsf{P}_{K}^{\mu }\equiv
\sum_{_{J\in \Omega \mathcal{D}:\ J\subset K}}\bigtriangleup _{J}^{\mu }$ on 
$K$ by 
\begin{equation*}
\mathsf{P}_{K}^{\mu }f=\sum_{_{J\in \Omega \mathcal{D}:\ J\subset
K}}\dsum\limits_{a\in \Gamma _{n}}\left\langle f,h_{J}^{\mu ,a}\right\rangle
_{\mu }h_{J}^{\mu ,a}\text{ so that }\left\Vert \mathsf{P}_{K}^{\mu
}f\right\Vert _{L^{2}\left( \mu \right) }^{2}=\sum_{_{J\in \Omega \mathcal{D}%
:\ J\subset K}}\dsum\limits_{a\in \Gamma _{n}}\left\vert \left\langle
f,h_{J}^{\mu ,a}\right\rangle _{\mu }\right\vert ^{2},
\end{equation*}%
and where a quasiHaar basis $\left\{ h_{J}^{\mu ,a}\right\} _{a\in \Gamma
_{n}\text{ and }J\in \Omega \mathcal{D}\Omega }$ adapted to the measure $\mu 
$ was defined in the subsection on a weighted quasiHaar basis above.

Now we define various notions for quasicubes which are inherited from the
same notions for cubes. The main objective here is to use the familiar
notation that one uses for cubes, but now extended to $\Omega $-quasicubes.
We have already introduced the notions of quasigrids $\Omega \mathcal{D}$,
and center, sidelength and dyadic associated to quasicubes $Q\in \Omega 
\mathcal{D}$, as well as quasiHaar functions, and we will continue to extend
to quasicubes the additional familiar notions related to cubes as we come
across them. We begin with the notion of \emph{deeply embedded}. Fix a
quasigrid $\Omega \mathcal{D}$. We say that a dyadic quasicube $J$ is $%
\left( \mathbf{r},\varepsilon \right) $-\emph{deeply embedded} in a (not
necessarily dyadic) quasicube $K$, which we write as $J\Subset _{\mathbf{r}%
,\varepsilon }K$, when $J\subset K$ and both 
\begin{eqnarray}
\ell \left( J\right) &\leq &2^{-\mathbf{r}}\ell \left( K\right) ,
\label{def deep embed} \\
\limfunc{qdist}\left( J,\partial K\right) &\geq &\frac{1}{2}\ell \left(
J\right) ^{\varepsilon }\ell \left( K\right) ^{1-\varepsilon },  \notag
\end{eqnarray}%
where we define the quasidistance $\limfunc{qdist}\left( E,F\right) $
between two sets $E$ and $F$ to be the Euclidean distance $\limfunc{dist}%
\left( \Omega ^{-1}E,\Omega ^{-1}F\right) $ between the preimages $\Omega
^{-1}E$ and $\Omega ^{-1}F$ of $E$ and $F$ under the map $\Omega $, and
where we recall that $\ell \left( J\right) \approx \left\vert J\right\vert ^{%
\frac{1}{n}}$. For the most part we will consider $J\Subset _{\mathbf{r}%
,\varepsilon }K$ when $J$ and $K$ belong to a common quasigrid $\Omega 
\mathcal{D}$, but an exception is made when defining the strong energy
constants below.

Recall that in dimension $n=1$, and for $\alpha =0$, the energy condition
constant was defined by%
\begin{equation*}
\mathcal{E}^{2}\equiv \sup_{I=\dot{\cup}I_{r}}\frac{1}{\left\vert
I\right\vert _{\sigma }}\sum_{r=1}^{\infty }\left( \frac{\mathrm{P}^{\alpha
}\left( I_{r},\mathbf{1}_{I}\sigma \right) }{\left\vert I_{r}\right\vert }%
\right) ^{2}\left\Vert \mathsf{P}_{I_{r}}^{\omega }\mathbf{x}\right\Vert
_{L^{2}\left( \omega \right) }^{2}\ ,
\end{equation*}%
where $I$, $I_{r}$ and $J$ are intervals in the real line. The extension to
higher dimensions we use here is that of `strong quasienergy condition'
defined in \cite{SaShUr9} and recalled below.

We define a quasicube $K$ (not necessarily in $\Omega \mathcal{D}$) to be an 
\emph{alternate} $\Omega \mathcal{D}$-quasicube if it is a union of $2^{n}$ $%
\Omega \mathcal{D}$-quasicubes $K^{\prime }$ with side length $\ell \left(
K^{\prime }\right) =\frac{1}{2}\ell \left( K\right) $ (such quasicubes were
called shifted in \cite{SaShUr5}, but that terminology conflicts with the
more familiar notion of shifted quasigrid). Thus for any $\Omega \mathcal{D}$%
-quasicube $L$ there are exactly $2^{n}$ alternate $\Omega \mathcal{D}$%
-quasicubes of twice the side length that contain $L$, and one of them is of
course the $\Omega \mathcal{D}$-parent of $L$. We denote the collection of
alternate $\Omega \mathcal{D}$-quasicubes by $\mathcal{A}\Omega \mathcal{D}$.

The extension of the energy conditions to higher dimensions in \cite{SaShUr5}
used the collection 
\begin{equation*}
\mathcal{M}_{\mathbf{r},\varepsilon -\limfunc{deep}}\left( K\right) \equiv
\left\{ \text{maximal }J\Subset _{\mathbf{r},\varepsilon }K\right\}
\end{equation*}%
of \emph{maximal} $\left( \mathbf{r},\varepsilon \right) $-deeply embedded
dyadic subquasicubes of a quasicube $K$ (a subquasicube $J$ of $K$ is a 
\emph{dyadic} subquasicube of $K$ if $J\in \Omega \mathcal{D}$ when $\Omega 
\mathcal{D}$ is a dyadic quasigrid containing $K$). This collection of
dyadic subquasicubes of $K$ is of course a pairwise disjoint decomposition
of $K$. We also defined there a refinement and extension of the collection $%
\mathcal{M}_{\left( \mathbf{r},\varepsilon \right) -\limfunc{deep}}\left(
K\right) $ for certain $K$ and each $\ell \geq 1$. For an alternate
quasicube $K\in \mathcal{A}\Omega \mathcal{D}$, define $\mathcal{M}_{\left( 
\mathbf{r},\varepsilon \right) -\limfunc{deep},\Omega \mathcal{D}}\left(
K\right) $ to consist of the \emph{maximal} $\mathbf{r}$-deeply embedded $%
\Omega \mathcal{D}$-dyadic subquasicubes $J$ of $K$. (In the special case
that $K$ itself belongs to $\Omega \mathcal{D}$, then $\mathcal{M}_{\left( 
\mathbf{r},\varepsilon \right) -\limfunc{deep},\Omega \mathcal{D}}\left(
K\right) =\mathcal{M}_{\left( \mathbf{r},\varepsilon \right) -\limfunc{deep}%
}\left( K\right) $.) Then in \cite{SaShUr5} for $\ell \geq 1$ we defined the
refinement%
\begin{eqnarray*}
\mathcal{M}_{\left( \mathbf{r},\varepsilon \right) -\limfunc{deep},\Omega 
\mathcal{D}}^{\ell }\left( K\right) &\equiv &\left\{ J\in \mathcal{M}%
_{\left( \mathbf{r},\varepsilon \right) -\limfunc{deep},\Omega \mathcal{D}%
}\left( \pi ^{\ell }K^{\prime }\right) \text{ for some }K^{\prime }\in 
\mathfrak{C}_{\Omega \mathcal{D}}\left( K\right) :\right. \\
&&\ \ \ \ \ \ \ \ \ \ \ \ \ \ \ \ \ \ \ \ \ \ \ \ \ \ \ \ \ \ \left.
J\subset L\text{ for some }L\in \mathcal{M}_{\left( \mathbf{r},\varepsilon
\right) -\limfunc{deep}}\left( K\right) \right\} ,
\end{eqnarray*}%
where $\mathfrak{C}_{\Omega \mathcal{D}}\left( K\right) $ is the obvious
extension to alternate quasicubes of the set of $\Omega \mathcal{D}$-dyadic
children. Thus $\mathcal{M}_{\left( \mathbf{r},\varepsilon \right) -\limfunc{%
deep},\Omega \mathcal{D}}^{\ell }\left( K\right) $ is the union, over all
quasichildren $K^{\prime }$ of $K$, of those quasicubes in $\mathcal{M}%
_{\left( \mathbf{r},\varepsilon \right) -\limfunc{deep}}\left( \pi ^{\ell
}K^{\prime }\right) $ that happen to be contained in some $L\in \mathcal{M}%
_{\left( \mathbf{r},\varepsilon \right) -\limfunc{deep},\Omega \mathcal{D}%
}\left( K\right) $. We then define the \emph{strong} quasienergy condition
as follows.

\begin{definition}
\label{def strong quasienergy}Let $0\leq \alpha <n$ and fix `goodness'
parameters $\left( \mathbf{r},\varepsilon \right) $. Suppose $\sigma $ and $%
\omega $ are locally finite positive Borel measures on $\mathbb{R}^{n}$.
Then the \emph{strong} quasienergy constant $\mathcal{E}_{\alpha }^{\limfunc{%
strong}}$ is defined by 
\begin{eqnarray*}
\left( \mathcal{E}_{\alpha }^{\limfunc{strong}}\right) ^{2} &\equiv &\sup_{I=%
\dot{\cup}I_{r}}\frac{1}{\left\vert I\right\vert _{\sigma }}%
\sum_{r=1}^{\infty }\sum_{J\in \mathcal{M}_{\mathbf{r},\varepsilon -\limfunc{%
deep}}\left( I_{r}\right) }\left( \frac{\mathrm{P}^{\alpha }\left( J,\mathbf{%
1}_{I}\sigma \right) }{\left\vert J\right\vert ^{\frac{1}{n}}}\right)
^{2}\left\Vert \mathsf{P}_{J}^{\omega }\mathbf{x}\right\Vert _{L^{2}\left(
\omega \right) }^{2} \\
&&+\sup_{\Omega \mathcal{D}}\sup_{I\in \mathcal{A}\Omega \mathcal{D}%
}\sup_{\ell \geq 0}\frac{1}{\left\vert I\right\vert _{\sigma }}\sum_{J\in 
\mathcal{M}_{\left( \mathbf{r},\varepsilon \right) -\limfunc{deep},\Omega 
\mathcal{D}}^{\ell }\left( I\right) }\left( \frac{\mathrm{P}^{\alpha }\left(
J,\mathbf{1}_{I}\sigma \right) }{\left\vert J\right\vert ^{\frac{1}{n}}}%
\right) ^{2}\left\Vert \mathsf{P}_{J}^{\omega }\mathbf{x}\right\Vert
_{L^{2}\left( \omega \right) }^{2}\ .
\end{eqnarray*}
\end{definition}

Similarly we have a dual version of $\mathcal{E}_{\alpha }^{\limfunc{strong}%
} $ denoted $\mathcal{E}_{\alpha }^{\limfunc{strong},\ast }$, and both
depend on $\mathbf{r}$ and $\varepsilon $ as well as on $n$ and $\alpha $.
An important point in this definition is that the quasicube $I$ in the
second line is permitted to lie \emph{outside} the quasigrid $\Omega 
\mathcal{D}$, but only as an alternate dyadic quasicube $I\in \mathcal{A}%
\Omega \mathcal{D} $. In the setting of quasicubes we continue to use the
linear function $\mathbf{x}$ in the final factor $\left\Vert \mathsf{P}%
_{J}^{\omega }\mathbf{x}\right\Vert _{L^{2}\left( \omega \right) }^{2}$ of
each line, and not the pushforward of $\mathbf{x}$ by $\Omega $. The reason
of course is that this condition is used to capture the first order
information in the Taylor expansion of a singular kernel.

\section{The Good-$\protect\lambda $ Lemma}

The basic new result of this paper is the following `Good-$\lambda $ Lemma'
whose utility will become evident when we pursue its corollaries below. Set 
\emph{fraktur} $A_{2}^{\alpha }$ to be the sum of the four $A_{2}^{\alpha }$
conditions:%
\begin{equation*}
\mathfrak{A}_{2}^{\alpha }=\mathcal{A}_{2}^{\alpha }+\mathcal{A}_{2}^{\alpha
,\ast }+A_{2}^{\alpha ,\limfunc{punct}}+A_{2}^{\alpha ,\ast ,\limfunc{punct}%
}.
\end{equation*}

\begin{lemma}[The Good-$\protect\lambda $ Lemma]
Suppose that $T^{\alpha }$ is a standard $\alpha $-fractional singular
integral in $\mathbb{R}^{n}$, and that $\sigma $ and $\omega $ are locally
finite positive Borel measures on $\mathbb{R}^{n}$. For every $\lambda \in
\left( 0,\frac{1}{2}\right) $, we have%
\begin{eqnarray}
&&\mathcal{WBP}_{T^{\alpha }}\left( \sigma ,\omega \right)  \label{g lambda}
\\
&\leq &C_{\alpha }\left( \frac{1}{\lambda }\sqrt{\mathfrak{A}_{2}^{\alpha
}\left( \sigma ,\omega \right) }+\left( \mathfrak{T}_{T^{\alpha }}+\mathfrak{%
T}_{T^{\alpha }}^{\ast }\right) \left( \sigma ,\omega \right) +\left( 
\mathcal{E}_{\alpha }^{\limfunc{strong}}+\mathcal{E}_{\alpha }^{\limfunc{%
strong},\ast }\right) \left( \sigma ,\omega \right) +\sqrt[4]{\lambda }%
\mathfrak{N}_{T^{\alpha }}\left( \sigma ,\omega \right) \right) .  \notag
\end{eqnarray}
\end{lemma}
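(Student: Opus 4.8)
The plan is to bound the weak boundedness quantity $\left|\int_Q T^\alpha(\mathbf 1_{Q'}\sigma)\,d\omega\right|$ for a touching/near-touching pair $(Q,Q')$ with comparable sidelengths by splitting the interaction into a ``bulk'' piece controlled by testing and energy, and a ``residual'' piece that is genuinely small and can be absorbed into $\sqrt[4]{\lambda}\,\mathfrak N_{T^\alpha}$. First I would fix a parameter $\lambda\in(0,\tfrac12)$ and, given a pair of quasicubes $Q,Q'$ as in \eqref{def WBP}, introduce an auxiliary quasicube $\widetilde Q$ (a common dilate of $Q$ and $Q'$, of sidelength $\approx \ell(Q)\approx\ell(Q')$) that contains both. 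The key device is to select, inside $\widetilde Q$, a ``halo'' region of relative width $\sim\lambda$ near the common boundary portion where $Q$ and $Q'$ touch, and to decompose $\mathbf 1_{Q'}\sigma = \mathbf 1_{Q'\setminus H}\sigma + \mathbf 1_{Q'\cap H}\sigma$, where $H$ is the $\lambda$-halo. Correspondingly $\int_Q T^\alpha(\mathbf 1_{Q'}\sigma)\,d\omega$ splits.

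For the far piece $\mathbf 1_{Q'\setminus H}\sigma$, the point $x\in Q$ and the support of the measure are separated by a distance $\gtrsim \lambda\,\ell(Q)$, so the kernel is pointwise bounded by $C_{CZ}(\lambda\ell(Q))^{\alpha-n}$; combining this crude bound with Cauchy--Schwarz and the definition of the offset $A_2^\alpha$ and punctured $A_2^\alpha$ constants (handling common point masses via $\mu(Q,\mathfrak P)$ exactly as in \cite{SaShUr9}) yields a bound of the shape $\tfrac1\lambda\sqrt{\mathfrak A_2^\alpha}\sqrt{|Q|_\omega|Q'|_\sigma}$. This is where the $\tfrac1\lambda$ in \eqref{g lambda} comes from. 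Alternatively, for the portion of $Q'\setminus H$ that is well inside $Q'$ one should instead recognize $\int_Q T^\alpha(\mathbf 1_{Q'_{\mathrm{good}}}\sigma)\,d\omega$ as essentially a testing integral plus an $A_2^\alpha$ error — using the ``full'' testing condition $\mathfrak{FT}_{T^\alpha}$, the stopping-time/NTV surgery of \cite{SaShUr9}, and the strong quasienergy condition $\mathcal E_\alpha^{\mathrm{strong}}$ to absorb the paraproduct-type and stopping terms that surgery generates. The dual testing and dual energy constants enter symmetrically when the roles of $Q$ and $Q'$ (equivalently $\sigma$ and $\omega$) are exchanged for the portion of the halo that is interior to $Q$ rather than $Q'$.

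The genuinely delicate part is the near piece, the halo contribution $\int_Q T^\alpha(\mathbf 1_{Q'\cap H}\sigma)\,d\omega$, because here the measures are at mutual distance zero and no Muckenhoupt bound with the favorable power of $\lambda$ is available directly. The plan is to dominate this by the operator norm $\mathfrak N_{T^\alpha}$ applied to the localized input $\mathbf 1_{Q'\cap H}\sigma$: by the norm inequality, $\left|\int_Q T^\alpha(\mathbf 1_{Q'\cap H}\sigma)\,d\omega\right|\le \mathfrak N_{T^\alpha}\,\sqrt{|Q|_\omega}\,\sqrt{|Q'\cap H|_\sigma}$. The crux is then the measure estimate $|Q'\cap H|_\sigma \lesssim \sqrt\lambda\,|Q'|_\sigma$ — i.e. that a $\sigma$-fraction at most $\sqrt\lambda$ of $Q'$ lives in a $\lambda$-halo. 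This is \emph{not} true for an arbitrary measure, but it becomes true after one first discards the extreme case where $\sigma$ restricted to $Q'$ is mostly a single point mass (which is precisely why the punctured $A_2^\alpha$ conditions appear): outside that case the halo can be chosen, by a pigeonhole over $\sim 1/\sqrt\lambda$ disjoint parallel sub-halos of width $\sqrt\lambda\,\ell(Q')$, to capture at most a $\sqrt\lambda$-fraction of the mass. This yields the $\sqrt[4]{\lambda}\,\mathfrak N_{T^\alpha}$ term after taking the square root, and I expect this pigeonhole-plus-point-mass-extraction to be the main obstacle, since it must be done uniformly over all admissible pairs and must interface cleanly with the quasigrid/NTV surgery used for the bulk term. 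Finally, one takes the supremum over all admissible pairs $(Q,Q')$ and over all tangent-line truncations to conclude \eqref{g lambda}.
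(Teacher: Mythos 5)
There is a genuine gap in the key step, and the overall framework you propose differs from the paper's in a way that matters.

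Your heuristics are right: the far/near split, the $\tfrac1\lambda\sqrt{\mathfrak A_2^\alpha}$ bound from separating supports by $\sim\lambda\ell(Q)$, and the absorption of the halo contribution into $\sqrt[4]{\lambda}\,\mathfrak N_{T^\alpha}$ via the measure estimate that a halo captures only a $\sqrt\lambda$ fraction of the mass. But your proposed proof of that measure estimate — pigeonholing over $\sim 1/\sqrt\lambda$ parallel sub-halos of width $\sqrt\lambda\ell(Q')$ to find one with small $\sigma$-mass — does not work when applied to the fixed pair $(Q,Q')$ that appears in the weak boundedness inequality. The halo $H$ in your decomposition is the set near the fixed common boundary $\partial Q\cap\partial Q'$; you do not get to choose where that boundary is, so you cannot pigeonhole it into a sub-slab of small mass. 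If $\sigma$ concentrates right at $\partial Q'$, the adjacent halo has large mass and the pigeonhole picks a slab in the interior of $Q'$, which is useless: moving the halo inward does not separate $Q'\setminus H$ from $Q$, and the discarded region between $\partial Q'$ and the chosen slab still carries the mass you cannot afford to put into the $\mathfrak N_{T^\alpha}$ term.

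The paper avoids this precisely by not working with the fixed pair $(Q,Q')$ directly. It first reduces $\mathcal{WBP}$ to the indicator/touching constant, then expands $\mathcal T^\alpha(\mathbf 1_Q,\mathbf 1_R)$ in a weighted Haar basis over a \emph{random} dyadic grid (using the $Q$-good cube and $Q$-good grid machinery from Subsection \ref{g/b tech} to discard bad grids with small probability), and observes that the weak boundedness property is used in the proof of the main theorem of \cite{SaShUr6} only for \emph{touching child cubes} $I_\theta,J_{\theta'}$ of random Haar cubes $I,J$ in the diagonal form $\mathsf B_{\diagup}$ and the form $\mathsf T^2_{\mathrm{far\ below}}$ — everything else is already bounded without $\mathcal{WBP}$ (this is the estimate (\ref{remainder est})). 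For those remaining touching terms, NTV surgery is applied with a halo $J\setminus J_\lambda$ near $\partial J$, where $J$ is a \emph{random} cube; the crucial estimate (\ref{follow est}), namely $\mathbb E_\Omega\int_{J'\setminus J_\lambda}|\triangle_J^\omega\mathbf 1_R|^2\,d\omega\leq C\sqrt\lambda\,|R|_\omega$, is obtained by averaging over translates of the grid (parameterization by translation, construction \#2), together with a threshold/ordering argument with $\delta=\sqrt\lambda$ for the translates whose halo spills outside $R$. The randomness of $\partial J$ is what replaces your pigeonhole: the average of the halo mass over all translates is small, and the Haar coefficient $\triangle_J^\omega\mathbf 1_R$ is itself small except for a bounded number of translates near $\partial R$. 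This averaging cannot be deleted; it is the engine of the proof. So to repair your proposal you would need to shift the halo decomposition from the fixed $(Q,Q')$ onto the random Haar cubes $(I,J)$, add the $Q$-good grid reduction, and carry out the translation-averaging estimate — at which point you are reproducing the paper's argument.

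A smaller issue: your first alternative for the far piece (recognizing $\int_Q T^\alpha(\mathbf 1_{Q'\setminus H}\sigma)\,d\omega$ as a full testing integral plus errors controlled by energy) is redundant with and less precise than the actual mechanism in the paper, where the strong energy constants enter only through the bound (\ref{remainder est}) on the remainder form $\mathcal R^\alpha$, not through a direct analysis of the far piece. The far piece in the surgery argument is handled purely by $\sqrt{\mathfrak A_2^\alpha}/\lambda$, as in (\ref{sep}).
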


Thus the effect of the Good-$\lambda $ Lemma is to `good-$\lambda $ replace'
the quasiweak boundedness property with just the usual testing conditions in
the presence of the side conditions of Muckenhoupt and energy on the weight
pair. However, in dimension $n=1$ a much stronger inequality can be proved
(see e.g. \cite{NTV3} and \cite{LaSaUr2}):%
\begin{equation*}
\mathcal{WBP}_{T^{\alpha }}\leq C_{\alpha }\left( \sqrt{\mathfrak{A}%
_{2}^{\alpha }}+\mathfrak{T}_{T^{\alpha }}+\mathfrak{T}_{T^{\alpha }}^{\ast
}\right) .
\end{equation*}

\subsection{Corollaries\label{Sub cor}}

Now we come to the corollaries of the Good-$\lambda $ Lemma. We first remove
the hypothesis of the quasiweak boundedness property from the conclusion of
part (1) of Theorem 1 in \cite{SaShUr9}.

\begin{remark}
\label{LacWic}In \cite{LaWi}, Lacey and Wick have removed the weak
boundedness property from their $T1$ theorem by using NTV surgery with two
independent grids, one for each function $f$ and $g$ in $\left\langle
T_{\sigma }^{\alpha }f,g\right\rangle $, in the course of their argument.
The use of independent grids for each of $f$ and $g$ greatly simplifies the
NTV surgery, but does not accommodate our control of functional energy by
Muckenhoupt and energy conditions.
\end{remark}

\begin{theorem}
\label{T1 theorem'}Suppose $0\leq \alpha <n$, that $T^{\alpha }$ is a
standard $\alpha $-fractional singular integral operator on $\mathbb{R}^{n}$%
, and that $\omega $ and $\sigma $ are locally finite positive Borel
measures on $\mathbb{R}^{n}$. Set $T_{\sigma }^{\alpha }f=T^{\alpha }\left(
f\sigma \right) $ for any smooth truncation of $T_{\sigma }^{\alpha }$. Let $%
\Omega :\mathbb{R}^{n}\rightarrow \mathbb{R}^{n}$ be a globally biLipschitz
map. Then the operator $T_{\sigma }^{\alpha }$ is bounded from $L^{2}\left(
\sigma \right) $ to $L^{2}\left( \omega \right) $, i.e. 
\begin{equation*}
\left\Vert T_{\sigma }^{\alpha }f\right\Vert _{L^{2}\left( \omega \right)
}\leq \mathfrak{N}_{T_{\sigma }^{\alpha }}\left\Vert f\right\Vert
_{L^{2}\left( \sigma \right) },
\end{equation*}%
uniformly in smooth truncations of $T^{\alpha }$, and moreover%
\begin{equation*}
\mathfrak{N}_{T_{\sigma }^{\alpha }}\leq C_{\alpha }\left( \sqrt{\mathfrak{A}%
_{2}^{\alpha }}+\mathfrak{T}_{T^{\alpha }}+\mathfrak{T}_{T^{\alpha }}^{\ast
}+\mathcal{E}_{\alpha }^{\limfunc{strong}}+\mathcal{E}_{\alpha }^{\limfunc{%
strong},\ast }\right) ,
\end{equation*}%
provided that the two dual $\mathcal{A}_{2}^{\alpha }$ conditions and the
two dual punctured Muckenhoupt conditions all hold, and the two dual
quasitesting conditions for $T^{\alpha }$ hold, and provided that the two
dual strong quasienergy conditions hold uniformly over all dyadic quasigrids 
$\Omega \mathcal{D}\subset \Omega \mathcal{P}^{n}$, i.e. $\mathcal{E}%
_{\alpha }^{\limfunc{strong}}+\mathcal{E}_{\alpha }^{\limfunc{strong},\ast
}<\infty $, and where the goodness parameters $\mathbf{r}$ and $\varepsilon $
implicit in the definition of the collections $\mathcal{M}_{\left( \mathbf{r}%
,\varepsilon \right) -\limfunc{deep}}\left( K\right) $ and $\mathcal{M}%
_{\left( \mathbf{r},\varepsilon \right) -\limfunc{deep},\Omega \mathcal{D}%
}^{\ell }\left( K\right) $ appearing in the strong energy conditions, are
fixed sufficiently large and small respectively depending only on $n$ and $%
\alpha $.
\end{theorem}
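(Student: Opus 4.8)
The plan is to derive Theorem \ref{T1 theorem'} directly from Theorem 1 of \cite{SaShUr9} together with the Good-$\lambda$ Lemma, by using the latter to eliminate the quasiweak boundedness hypothesis. Theorem 1 of \cite{SaShUr9} asserts boundedness of $T_\sigma^\alpha$ under the hypotheses of $\mathcal{A}_2^\alpha$ (and its dual), the punctured Muckenhoupt conditions, the two dual quasitesting conditions, the two dual strong quasienergy conditions, \emph{and} the quasiweak boundedness property $\mathcal{WBP}_{T^\alpha}(\sigma,\omega)<\infty$, with a bound
\begin{equation*}
\mathfrak{N}_{T_\sigma^\alpha}\leq C_\alpha\left(\sqrt{\mathfrak{A}_2^\alpha}+\mathfrak{T}_{T^\alpha}+\mathfrak{T}_{T^\alpha}^\ast+\mathcal{E}_\alpha^{\limfunc{strong}}+\mathcal{E}_\alpha^{\limfunc{strong},\ast}+\mathcal{WBP}_{T^\alpha}\right).
\end{equation*}
The first step is to record this, being careful that in \cite{SaShUr9} the conclusion is stated for smooth truncations, which by the truncation-equivalence discussion in the excerpt (the results of \cite{LaSaShUr3} extended to $\mathbf{R}^{\alpha,n}$ and beyond, permitting common point masses) is equivalent, modulo the Muckenhoupt constants which are already among our hypotheses, to boundedness with the tangent line truncations used throughout. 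So it suffices to bound $\mathcal{WBP}_{T^\alpha}$ by the remaining quantities plus a small multiple of $\mathfrak{N}_{T^\alpha}$.

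The second step is to invoke the Good-$\lambda$ Lemma: for every $\lambda\in(0,\tfrac12)$,
\begin{equation*}
\mathcal{WBP}_{T^\alpha}(\sigma,\omega)\leq C_\alpha\left(\tfrac{1}{\lambda}\sqrt{\mathfrak{A}_2^\alpha}+\mathfrak{T}_{T^\alpha}+\mathfrak{T}_{T^\alpha}^\ast+\mathcal{E}_\alpha^{\limfunc{strong}}+\mathcal{E}_\alpha^{\limfunc{strong},\ast}+\sqrt[4]{\lambda}\,\mathfrak{N}_{T^\alpha}\right).
\end{equation*}
Here one must first know that $\mathfrak{N}_{T^\alpha}$ (equivalently $\mathfrak{N}_{T_\sigma^\alpha}$, taken as a supremum over truncations) is \emph{a priori finite}; this is the standard device — one works with a fixed truncation $T_{\sigma,\delta,R}^\alpha$, whose operator norm is trivially finite (the kernel is bounded with compact support and $\sigma$ is locally finite, so on $L^2(\sigma)$ it maps into $L^2(\omega)$ with some finite, possibly huge, constant), runs the whole argument with constants uniform in $\delta,R$, and only at the end takes the supremum. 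All the conditions $\mathfrak{A}_2^\alpha$, $\mathfrak{T}_{T^\alpha}$, $\mathfrak{T}_{T^\alpha}^\ast$, $\mathcal{E}_\alpha^{\limfunc{strong}}$, $\mathcal{E}_\alpha^{\limfunc{strong},\ast}$, and $\mathcal{WBP}_{T^\alpha}$ are understood uniformly over these truncations, so this is legitimate.

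The third step is the absorption. Feed the Good-$\lambda$ bound for $\mathcal{WBP}_{T^\alpha}$ into the \cite{SaShUr9} estimate to get
\begin{equation*}
\mathfrak{N}_{T_\sigma^\alpha}\leq C_\alpha'\left(\tfrac{1}{\lambda}\sqrt{\mathfrak{A}_2^\alpha}+\mathfrak{T}_{T^\alpha}+\mathfrak{T}_{T^\alpha}^\ast+\mathcal{E}_\alpha^{\limfunc{strong}}+\mathcal{E}_\alpha^{\limfunc{strong},\ast}\right)+C_\alpha'\sqrt[4]{\lambda}\,\mathfrak{N}_{T_\sigma^\alpha}.
\end{equation*}
Now choose $\lambda$ small enough — depending only on $n$ and $\alpha$ through $C_\alpha'$, e.g. $\lambda$ with $C_\alpha'\sqrt[4]{\lambda}\leq\tfrac12$ — and, using the a priori finiteness of $\mathfrak{N}_{T_\sigma^\alpha}$ for a fixed truncation, absorb the last term into the left side. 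This yields $\mathfrak{N}_{T_\sigma^\alpha}\leq 2C_\alpha'\big(\tfrac{1}{\lambda}\sqrt{\mathfrak{A}_2^\alpha}+\mathfrak{T}_{T^\alpha}+\mathfrak{T}_{T^\alpha}^\ast+\mathcal{E}_\alpha^{\limfunc{strong}}+\mathcal{E}_\alpha^{\limfunc{strong},\ast}\big)$ with a now-fixed $\lambda$, hence the claimed bound with a new constant $C_\alpha$, uniformly in truncations; taking the supremum over truncations completes the proof. The only genuine subtlety — and the step I expect to need the most care — is the a priori finiteness argument: one must make sure that \emph{every} constant produced along the way (in both \cite{SaShUr9} and the Good-$\lambda$ Lemma) is independent of the truncation parameters, so that the absorption is performed with a quantity known to be finite but not yet known to be controlled, and that the goodness parameters $\mathbf{r},\varepsilon$ are the same fixed choice used in \cite{SaShUr9} and in the energy conditions hypothesized here. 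Everything else is bookkeeping.
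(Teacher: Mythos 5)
Your proposal matches the paper's proof essentially verbatim: apply Theorem 1 of \cite{SaShUr9} to a fixed tangent line truncation $T_{\delta,R}^{\alpha}$, feed in the Good-$\lambda$ Lemma bound for $\mathcal{WBP}_{T_{\delta,R}^{\alpha}}$, and absorb the term $C_{\alpha}'\sqrt[4]{\lambda}\,\mathfrak{N}_{T_{\delta,R}^{\alpha}}$ for $\lambda$ small, using a priori finiteness of the truncated operator norm, finally taking the supremum over truncations. The only small imprecision is in your justification of that a priori finiteness — a bounded compactly supported kernel together with local finiteness of $\sigma$ alone does not yield $L^{2}(\sigma)\to L^{2}(\omega)$ boundedness; the paper instead records $\mathfrak{N}_{T_{\delta,R}^{\alpha}}\leq C_{n,\alpha,\delta,R}\sqrt{\mathfrak{A}_{2}^{\alpha}}$, which uses the assumed Muckenhoupt condition and is the correct source of finiteness.
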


\begin{proof}
Let $T_{\delta ,R}^{\alpha }$ be a tangent line approximation to $T^{\alpha
} $ as introduced above. Then $\mathfrak{N}_{T_{\delta ,R}^{\alpha }}<\infty 
$, indeed $\mathfrak{N}_{T_{\delta ,R}^{\alpha }}\leq C_{n,\alpha ,\delta ,R}%
\sqrt{\mathfrak{A}_{2}^{\alpha }}$ by an easy argument, and by part (1) of
Theorem 1 in \cite{SaShUr9} applied to the $\alpha $-fractional singular
integral $T_{\delta ,R}^{\alpha }$ we have%
\begin{equation*}
\mathfrak{N}_{T_{\delta ,R}^{\alpha }}\leq C_{\alpha }\left( \sqrt{\mathfrak{%
A}_{2}^{\alpha }}+\mathfrak{T}_{T_{\delta ,R}^{\alpha }}+\mathfrak{T}%
_{T_{\delta ,R}^{\alpha }}^{\ast }+\mathcal{E}_{\alpha }^{\limfunc{strong}}+%
\mathcal{E}_{\alpha }^{\limfunc{strong},\ast }+\mathcal{WBP}_{T_{\delta
,R}^{\alpha }}\right) ,
\end{equation*}%
with $C_{\alpha }$ independent of $\delta $ and $R$. We obtain from the Good-%
$\lambda $ Lemma applied to $T_{\sigma ,\delta ,R}^{\alpha }$ in place of $%
T^{\alpha }$,%
\begin{equation*}
\mathcal{WBP}_{T_{\delta ,R}^{\alpha }}\leq C_{\alpha }\left( \frac{1}{%
\lambda }\sqrt{\mathfrak{A}_{2}^{\alpha }}+\mathfrak{T}_{T_{\delta
,R}^{\alpha }}+\mathfrak{T}_{T_{\delta ,R}^{\alpha }}^{\ast }+\mathcal{E}%
_{\alpha }^{\limfunc{strong}}+\mathcal{E}_{\alpha }^{\limfunc{strong},\ast }+%
\sqrt[4]{\lambda }\mathfrak{N}_{T_{\delta ,R}^{\alpha }}\right) ,
\end{equation*}%
and then combining inequalities gives%
\begin{equation*}
\mathfrak{N}_{T_{\delta ,R}^{\alpha }}\leq C_{\alpha }^{\prime }\left( \frac{%
1}{\lambda }\sqrt{\mathfrak{A}_{2}^{\alpha }}+\mathfrak{T}_{T_{\delta
,R}^{\alpha }}+\mathfrak{T}_{T_{\delta ,R}^{\alpha }}^{\ast }+\mathcal{E}%
_{\alpha }^{\limfunc{strong}}+\mathcal{E}_{\alpha }^{\limfunc{strong},\ast }+%
\sqrt[4]{\lambda }\mathfrak{N}_{T_{\delta ,R}^{\alpha }}\right) ,
\end{equation*}%
with $C_{\alpha }^{\prime }$ independent of $\delta $ and $R$. Since $%
\mathfrak{N}_{T_{\delta ,R}^{\alpha }}<\infty $, we can absorb the term $%
C_{\alpha }^{\prime }\sqrt{\lambda }\mathfrak{N}_{T_{\delta ,R}^{\alpha }}$
on the right hand side above into the left hand side for $\lambda >0$
sufficiently small. Since $T_{\delta ,R}^{\alpha }$ is an arbitrary tangent
line approximation to $T^{\alpha }$, the proof of Theorem \ref{T1 theorem'}
is complete.
\end{proof}

The first case of the following $T1$ theorem was proved in \cite{SaShUr8},
and the second case is a corollary of Theorem \ref{T1 theorem'} above and
Theorem 2 in \cite{SaShUr9}.

\begin{theorem}
\label{special T1}Suppose $0\leq \alpha <n$, that $T^{\alpha }$ is a
standard $\alpha $-fractional singular integral operator on $\mathbb{R}^{n}$%
, and that $\omega $ and $\sigma $ are locally finite positive Borel
measures on $\mathbb{R}^{n}$. Set $T_{\sigma }^{\alpha }f=T^{\alpha }\left(
f\sigma \right) $ for any smooth truncation of $T_{\sigma }^{\alpha }$. Let $%
\Omega :\mathbb{R}^{n}\rightarrow \mathbb{R}^{n}$ be a globally biLipschitz
map. Then%
\begin{equation*}
\mathfrak{N}_{T_{\sigma }^{\alpha }}\approx \sqrt{\mathfrak{A}_{2}^{\alpha }}%
+\mathfrak{T}_{T^{\alpha }}+\mathfrak{T}_{T^{\alpha }}^{\ast }\ ,
\end{equation*}%
in the following two cases:
\end{theorem}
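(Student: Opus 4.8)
The plan is to prove the equivalence $\mathfrak{N}_{T_\sigma^\alpha}\approx \sqrt{\mathfrak{A}_2^\alpha}+\mathfrak{T}_{T^\alpha}+\mathfrak{T}_{T^\alpha}^\ast$ one inequality at a time, handling the harder direction via the two-case split indicated in the statement. First I would dispose of the easy bound $\sqrt{\mathfrak{A}_2^\alpha}+\mathfrak{T}_{T^\alpha}+\mathfrak{T}_{T^\alpha}^\ast\lesssim \mathfrak{N}_{T_\sigma^\alpha}$, which is needed in both cases. The two dual quasitesting constants are dominated by $\mathfrak{N}_{T_\sigma^\alpha}$ by inserting $f=\mathbf{1}_{Q}$ into the norm inequality (\ref{two weight'}) and $\mathbf{1}_{Q}$ into its dual, uniformly over tangent line truncations. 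For the Muckenhoupt side I would invoke the standard necessity arguments for elliptic fractional singular integrals as recorded in \cite{SaShUr9}: the offset $A_2^\alpha$ constant is extracted by testing on neighbouring quasicubes, the one-tailed $\mathcal{A}_2^\alpha$ constant and its dual by testing on an indicator and estimating the Poisson tail of $T_\sigma^\alpha$, and the two punctured Muckenhoupt constants by letting the quasicube in the supremum shrink toward a common point mass of $\sigma$ and $\omega$; the ellipticity that these arguments require is present in both cases of the statement. This gives the direction $\gtrsim$.

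For the reverse inequality in the first case there is nothing new to prove: it is precisely the characterization established in \cite{SaShUr8} (the `measure on a curve' setting referred to in the introduction), whose hypotheses coincide with those of the first case, so I would simply cite it.

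For the second case, the plan is to obtain the bound from Theorem \ref{T1 theorem'} by discharging its only remaining side hypothesis, namely the two dual strong quasienergy conditions. By Theorem 2 of \cite{SaShUr9}, in the second case one has, with the goodness parameters $\mathbf{r},\varepsilon$ fixed as in Theorem \ref{T1 theorem'},
\begin{equation*}
\mathcal{E}_{\alpha }^{\limfunc{strong}}+\mathcal{E}_{\alpha }^{\limfunc{strong},\ast }\leq C_\alpha\left( \sqrt{\mathfrak{A}_2^\alpha}+\mathfrak{T}_{T^\alpha}+\mathfrak{T}_{T^\alpha}^\ast\right) ,
\end{equation*}
and this estimate holds uniformly over all dyadic quasigrids $\Omega \mathcal{D}\subset \Omega \mathcal{P}^{n}$ and over smooth truncations, which is exactly the form in which Theorem \ref{T1 theorem'} uses the energy conditions. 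Substituting this into the conclusion of Theorem \ref{T1 theorem'}, whose proof has already absorbed the quasiweak boundedness property via the Good-$\lambda$ Lemma, yields
\begin{equation*}
\mathfrak{N}_{T_\sigma^\alpha}\leq C_\alpha\left( \sqrt{\mathfrak{A}_2^\alpha}+\mathfrak{T}_{T^\alpha}+\mathfrak{T}_{T^\alpha}^\ast+\mathcal{E}_{\alpha }^{\limfunc{strong}}+\mathcal{E}_{\alpha }^{\limfunc{strong},\ast }\right) \lesssim \sqrt{\mathfrak{A}_2^\alpha}+\mathfrak{T}_{T^\alpha}+\mathfrak{T}_{T^\alpha}^\ast ,
\end{equation*}
uniformly in smooth truncations, which completes the proof.

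The hard part is not this assembly, which is essentially bookkeeping on top of Theorem \ref{T1 theorem'} and Theorem 2 of \cite{SaShUr9}; it lives inside those two inputs. One is the Good-$\lambda$ Lemma itself, which powers Theorem \ref{T1 theorem'} and removes the weak boundedness property. The other is the geometric energy estimate of \cite{SaShUr9} underlying its Theorem 2, which must control the strong quasienergy constants — including the contribution of the alternate quasicubes that are allowed to lie \emph{outside} the quasigrid — by the Muckenhoupt and testing data available in the second case. I would also take care to verify that all constants produced are independent of the truncation parameters $\delta,R$, so that the truncation-limiting argument from the proof of Theorem \ref{T1 theorem'} applies here verbatim.
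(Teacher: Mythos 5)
Your proposal matches the paper's argument exactly: the paper states the theorem after a one-sentence proof — case (1) is cited to \cite{SaShUr8}, and case (2) is a corollary of Theorem \ref{T1 theorem'} together with Theorem 2 of \cite{SaShUr9}, which bounds the strong quasienergy constants by $\sqrt{\mathfrak{A}_2^\alpha}+\mathfrak{T}_{T^\alpha}+\mathfrak{T}_{T^\alpha}^\ast$ under the $k$-energy dispersed hypothesis. Your handling of the easy necessity direction and your note about uniformity in the truncation parameters $\delta,R$ are also consistent with the framework the paper sets up in the proof of Theorem \ref{T1 theorem'}.
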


\begin{enumerate}
\item when $T^{\alpha }$ is a strongly elliptic standard $\alpha $%
-fractional singular integral operator on $\mathbb{R}^{n}$, and one of the
weights $\sigma $ or $\omega $ is supported on a compact $C^{1,\delta }$
curve in $\mathbb{R}^{n}$,

\item when $T^{\alpha }=\mathbf{R}^{\alpha }$ is the vector of $\alpha $%
-fractional Riesz transforms, and both weights $\sigma $ and $\omega $ are $%
k $-energy\emph{\ }dispersed where $0\leq k\leq n-1$ satisfies%
\begin{equation*}
\left\{ 
\begin{array}{ccc}
n-k<\alpha <n,\ \alpha \neq n-1 & \text{ if } & 1\leq k\leq n-2 \\ 
0\leq \alpha <n,\ \alpha \neq 1,n-1 & \text{ if } & k=n-1%
\end{array}%
\right. .
\end{equation*}
\end{enumerate}

There is a further corollary that can be easily obtained, namely a \textbf{%
two weight} accretive global $Tb$ theorem whenever a two weight $T1$ theorem
holds for strictly comparable weight pairs. We say that two weight pairs $%
\left( \sigma ,\omega \right) $ and $\left( \widetilde{\sigma },\widetilde{%
\omega }\right) $ are \emph{strictly comparable} if $\widetilde{\sigma }%
=h_{1}\sigma $ and $\widetilde{\omega }=h_{2}\omega $ where each $h_{i}$ is
a function bounded between two positive constants. The simple proof of the
following accretive global $Tb$ theorem uses only the \emph{statement} of a
related $T1$ theorem. We say that a complex-valued function $b$ is \emph{%
accretive} on $\mathbb{R}^{n}$ if 
\begin{equation*}
0<c_{b}\leq \func{Re}b\left( x\right) \leq \left\vert b\left( x\right)
\right\vert \leq C_{b}<\infty ,\ \ \ \ \ x\in \mathbb{R}^{n}\ .
\end{equation*}

\begin{theorem}
\label{global Tb}Suppose $0\leq \alpha <n$, that $T^{\alpha }$ is a standard 
$\alpha $-fractional singular integral operator on $\mathbb{R}^{n}$, and
that $\omega $ and $\sigma $ are locally finite positive Borel measures on $%
\mathbb{R}^{n}$ for which we have the `$T1$ theorem' for strictly comparable
weight pairs, i.e.%
\begin{equation}
\mathfrak{N}_{T_{\sigma }^{\alpha }}\left( \widetilde{\sigma },\widetilde{%
\omega }\right) \approx \sqrt{\mathfrak{A}_{2}^{\alpha }\left( \widetilde{%
\sigma },\widetilde{\omega }\right) }+\mathfrak{T}_{T^{\alpha }}\left( 
\widetilde{\sigma },\widetilde{\omega }\right) +\mathfrak{T}_{T^{\alpha
}}^{\ast }\left( \widetilde{\sigma },\widetilde{\omega }\right) ,
\label{T1 comp}
\end{equation}%
whenever $\left( \sigma ,\omega \right) $ and $\left( \widetilde{\sigma },%
\widetilde{\omega }\right) $ are strictly comparable. Finally, let $b$ and $%
b^{\ast }$ be two accretive functions on $\mathbb{R}^{n}$. Then the best
constant $\mathfrak{N}_{T_{\sigma }^{\alpha }}=\mathfrak{N}_{T_{\sigma
}^{\alpha }}\left( \sigma ,\omega \right) $ in the two weight norm
inequality 
\begin{equation*}
\left\Vert T_{\sigma }^{\alpha }f\right\Vert _{L^{2}\left( \omega \right)
}\leq \mathfrak{N}_{T_{\sigma }^{\alpha }}\left\Vert f\right\Vert
_{L^{2}\left( \sigma \right) },
\end{equation*}%
taken uniformly over tangent line truncations of $T^{\alpha }$, satisfies%
\begin{equation}
\mathfrak{N}_{T_{\sigma }^{\alpha }}\approx \sqrt{\mathfrak{A}_{2}^{\alpha }}%
+\mathfrak{T}_{T^{\alpha }}^{b}+\mathfrak{T}_{T^{\alpha }}^{b^{\ast },\ast },
\label{Tb}
\end{equation}%
where the two dual $b$-testing conditions for $T^{\alpha }$ are given by%
\begin{eqnarray*}
\int_{Q}\left\vert T_{\sigma }^{\alpha }\left( \mathbf{1}_{Q}b\right)
\right\vert ^{2}d\omega &\leq &\mathfrak{T}_{T^{\alpha }}^{b}\left\vert
Q\right\vert _{\sigma }\ ,\ \ \ \ \ \text{for all cubes }Q, \\
\int_{Q}\left\vert T_{\omega }^{\alpha ,\ast }\left( \mathbf{1}_{Q}b^{\ast
}\right) \right\vert ^{2}d\sigma &\leq &\mathfrak{T}_{T^{\alpha }}^{b^{\ast
},\ast }\left\vert Q\right\vert _{\omega }\ ,\ \ \ \ \ \text{for all cubes }%
Q,
\end{eqnarray*}%
and where we interpret the left sides above as holding uniformly over all
tangent line truncations of $T^{\alpha }$.
\end{theorem}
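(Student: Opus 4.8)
The plan is to deduce the two weight accretive global $Tb$ theorem \eqref{Tb} from the two weight $T1$ theorem hypothesis \eqref{T1 comp} by the classical device of absorbing the accretive functions into the measures. Set $\widetilde{\sigma}=\left\vert b\right\vert^{2}\sigma$ and $\widetilde{\omega}=\left\vert b^{\ast}\right\vert^{2}\omega$; since $b,b^{\ast}$ are accretive we have $c_{b}^{2}\leq\left\vert b\right\vert^{2}\leq C_{b}^{2}$ and similarly for $b^{\ast}$, so $\left(\widetilde{\sigma},\widetilde{\omega}\right)$ is strictly comparable to $\left(\sigma,\omega\right)$ in the sense defined just before the theorem, and all the quantities $\sqrt{\mathfrak{A}_{2}^{\alpha}}$, $\mathfrak{N}$, $\mathfrak{T}$ attached to the two pairs are mutually comparable up to constants depending only on $c_{b},C_{b},c_{b^{\ast}},C_{b^{\ast}}$. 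The key observation is the multiplication isometry (up to constants): the map $f\mapsto bf$ is a bounded invertible map $L^{2}(\widetilde{\sigma})\to L^{2}(\sigma)$ with $\left\Vert bf\right\Vert_{L^{2}(\sigma)}\approx\left\Vert f\right\Vert_{L^{2}(\widetilde{\sigma})}$, and dually $g\mapsto b^{\ast}g$ maps $L^{2}(\widetilde{\omega})\to L^{2}(\omega)$.

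Next I would unwind the operator conjugation. For the norm inequality, write $T_{\sigma}^{\alpha}(bf)=T^{\alpha}(bf\,\sigma)=T^{\alpha}(f\,\widetilde{\sigma})=T_{\widetilde{\sigma}}^{\alpha}f$, so that
\begin{equation*}
\left\Vert T_{\sigma}^{\alpha}(bf)\right\Vert_{L^{2}(\omega)}=\left\Vert T_{\widetilde{\sigma}}^{\alpha}f\right\Vert_{L^{2}(\omega)}\approx\left\Vert T_{\widetilde{\sigma}}^{\alpha}f\right\Vert_{L^{2}(\widetilde{\omega})},
\end{equation*}
using $\left\vert b^{\ast}\right\vert^{2}\approx 1$ pointwise. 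Hence $\mathfrak{N}_{T_{\sigma}^{\alpha}}(\sigma,\omega)\approx\mathfrak{N}_{T_{\widetilde{\sigma}}^{\alpha}}(\widetilde{\sigma},\widetilde{\omega})=\mathfrak{N}_{T^{\alpha}}(\widetilde{\sigma},\widetilde{\omega})$, and by the hypothesis \eqref{T1 comp} this last quantity is comparable to $\sqrt{\mathfrak{A}_{2}^{\alpha}(\widetilde{\sigma},\widetilde{\omega})}+\mathfrak{T}_{T^{\alpha}}(\widetilde{\sigma},\widetilde{\omega})+\mathfrak{T}_{T^{\alpha}}^{\ast}(\widetilde{\sigma},\widetilde{\omega})$, which by strict comparability is $\approx\sqrt{\mathfrak{A}_{2}^{\alpha}(\sigma,\omega)}+\mathfrak{T}_{T^{\alpha}}(\widetilde{\sigma},\widetilde{\omega})+\mathfrak{T}_{T^{\alpha}}^{\ast}(\widetilde{\sigma},\widetilde{\omega})$. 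It remains to identify the ordinary testing constants for the tilded pair with the $b$-testing constants for the original pair: by the same unwinding, $\int_{Q}\left\vert T^{\alpha}(\mathbf{1}_{Q}\widetilde{\sigma})\right\vert^{2}d\widetilde{\omega}=\int_{Q}\left\vert T^{\alpha}(\mathbf{1}_{Q}b\cdot b\sigma)... \right\vert$ — more carefully, $T^{\alpha}(\mathbf{1}_{Q}\widetilde{\sigma})=T^{\alpha}(\mathbf{1}_{Q}\left\vert b\right\vert^{2}\sigma)=T_{\sigma}^{\alpha}(\mathbf{1}_{Q}\left\vert b\right\vert^{2})$, and since $\left\vert b\right\vert^{2}\approx b\cdot\overline{\mathrm{sgn}\,b}\cdot\left\vert b\right\vert$ is not literally $b$, one instead observes that $\left\vert b\right\vert^{2}=b\bar b$ and uses that multiplication by the bounded function $\bar b/ b$ is harmless; alternatively, and more cleanly, one takes $\widetilde{\sigma}=b\sigma$ when $b$ is real, and in the complex case absorbs $b$ on one side and $\bar b$ via the adjoint — this bookkeeping is the only delicate point. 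After this identification, $\mathfrak{T}_{T^{\alpha}}(\widetilde{\sigma},\widetilde{\omega})\approx\mathfrak{T}_{T^{\alpha}}^{b}(\sigma,\omega)$ and dually $\mathfrak{T}_{T^{\alpha}}^{\ast}(\widetilde{\sigma},\widetilde{\omega})\approx\mathfrak{T}_{T^{\alpha}}^{b^{\ast},\ast}(\sigma,\omega)$, and also $\sqrt{\mathfrak{A}_{2}^{\alpha}(\sigma,\omega)}\lesssim\mathfrak{N}_{T_{\sigma}^{\alpha}}$ follows from the two weight norm inequality by the standard testing-on-indicators-with-tails argument recalled after Theorem \ref{Hilbert}. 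Combining the displays yields \eqref{Tb}.

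The step I expect to be the main obstacle is the honest treatment of the complex accretive case: $\left\vert b\right\vert^{2}\sigma$ is the natural choice to make the multiplication map an $L^{2}$ isometry, but the testing functions in the $b$-testing conditions are $\mathbf{1}_{Q}b$, not $\mathbf{1}_{Q}\left\vert b\right\vert^{2}$, so one must verify that $T^{\alpha}(\mathbf{1}_{Q}b\sigma)$ and $T^{\alpha}(\mathbf{1}_{Q}\left\vert b\right\vert^{2}\sigma)$ have comparable $L^{2}(\omega)$ norms over $Q$ — which holds because $b/\left\vert b\right\vert^{2}=1/\bar b$ is bounded above and below, so $\mathbf{1}_{Q}b=(1/\bar b)\cdot\mathbf{1}_{Q}\left\vert b\right\vert^{2}$ and one can pull the bounded factor through at the level of the measure, i.e. compare against the pair $(\left\vert b\right\vert^{2}\sigma,\omega)$ via yet another strict comparability. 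Once one commits to the triangle of pairs $(\sigma,\omega)$, $(\left\vert b\right\vert^{2}\sigma,\left\vert b^{\ast}\right\vert^{2}\omega)$, and chases constants carefully, everything is routine; there is no hard analysis here, only the observation — emphasized in the paper's abstract — that a clean $T1$ statement for \emph{all} strictly comparable weight pairs immediately upgrades to a $Tb$ statement, because the accretive hypothesis is exactly what is needed to stay within a strict-comparability class.
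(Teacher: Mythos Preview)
Your overall strategy---absorb the accretive functions into the measures, apply the assumed $T1$ theorem to a strictly comparable pair, and pull back---is exactly the paper's. The gap is precisely the complex-$b$ issue you flag, and your proposed fix does not close it. With $\widetilde{\sigma}=\lvert b\rvert^{2}\sigma$, the ordinary testing condition for $(\widetilde{\sigma},\widetilde{\omega})$ involves $T_{\sigma}^{\alpha}(\mathbf{1}_{Q}\lvert b\rvert^{2})$, whereas the $b$-testing condition involves $T_{\sigma}^{\alpha}(\mathbf{1}_{Q}b)$. You write $\mathbf{1}_{Q}b=(1/\bar{b})\mathbf{1}_{Q}\lvert b\rvert^{2}$ and propose to ``pull the bounded factor through at the level of the measure,'' but $T^{\alpha}$ is a singular integral and does not commute with multiplication by $1/\bar{b}$; passing to ``yet another strictly comparable pair'' changes the measure, not the test function, so it cannot absorb a factor sitting \emph{inside} the operator. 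There is in general no comparison between $\lVert T_{\sigma}^{\alpha}(\mathbf{1}_{Q}b)\rVert_{L^{2}(Q,\omega)}$ and $\lVert T_{\sigma}^{\alpha}(\mathbf{1}_{Q}\lvert b\rvert^{2})\rVert_{L^{2}(Q,\omega)}$.

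The paper's resolution uses that the kernel $K^{\alpha}$ is \emph{real-valued}. Set $\widetilde{\sigma}=(\func{Re}b)\sigma$ and $\widetilde{\omega}=(\func{Re}b^{\ast})\omega$; these are positive and strictly comparable to $(\sigma,\omega)$ by accretivity. Then $T_{\widetilde{\sigma}}^{\alpha}\mathbf{1}_{Q}=T_{\sigma}^{\alpha}(\mathbf{1}_{Q}\func{Re}b)=\func{Re}\,T_{\sigma}^{\alpha}(\mathbf{1}_{Q}b)$, so $\lvert T_{\widetilde{\sigma}}^{\alpha}\mathbf{1}_{Q}\rvert\leq\lvert T_{\sigma}^{\alpha}(\mathbf{1}_{Q}b)\rvert$ pointwise, giving $\mathfrak{T}_{T^{\alpha}}(\widetilde{\sigma},\widetilde{\omega})\lesssim\mathfrak{T}_{T^{\alpha}}^{b}(\sigma,\omega)$ immediately. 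The rest of your argument then goes through unchanged.
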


Note that Theorem \ref{global Tb}\ applies in particular to both cases (1)
and (2) of Theorem \ref{special T1}.

\begin{proof}
We first note that since the kernel $K^{\alpha }$ is real-valued,%
\begin{eqnarray*}
\int_{Q}\left\vert T_{\sigma }^{\alpha }\left( \mathbf{1}_{Q}\func{Re}%
b\right) \right\vert ^{2}d\omega &=&\int_{Q}\left\vert \func{Re}T_{\sigma
}^{\alpha }\left( \mathbf{1}_{Q}b\right) \right\vert ^{2}d\omega \leq
\int_{Q}\left\vert T_{\sigma }^{\alpha }\left( \mathbf{1}_{Q}b\right)
\right\vert ^{2}d\omega \leq \mathfrak{T}_{T^{\alpha }}^{b}\left\vert
Q\right\vert _{\sigma }\ , \\
\int_{Q}\left\vert T_{\omega }^{\alpha ,\ast }\left( \mathbf{1}_{Q}\func{Re}%
b^{\ast }\right) \right\vert ^{2}d\sigma &=&\int_{Q}\left\vert \func{Re}%
T_{\omega }^{\alpha ,\ast }\left( \mathbf{1}_{Q}b^{\ast }\right) \right\vert
^{2}d\sigma \leq \int_{Q}\left\vert T_{\omega }^{\alpha ,\ast }\left( 
\mathbf{1}_{Q}b^{\ast }\right) \right\vert ^{2}d\sigma \leq \mathfrak{T}%
_{T^{\alpha }}^{b^{\ast },\ast }\left\vert Q\right\vert _{\omega }\ ,
\end{eqnarray*}%
and if we now define measures 
\begin{equation*}
\widetilde{\omega }\equiv \left( \func{Re}b^{\ast }\right) \omega \text{ and 
}\widetilde{\sigma }\equiv \left( \func{Re}b\right) \sigma \ ,
\end{equation*}%
we see that the operator $T^{\alpha }$ and the weight pair $\left( 
\widetilde{\sigma },\widetilde{\omega }\right) $ satisfy (\ref{T1 comp}).
But it follows that $\mathfrak{T}_{T^{\alpha }}\left( \widetilde{\sigma },%
\widetilde{\omega }\right) \approx \mathfrak{T}_{T^{\alpha }}^{b}\left(
\sigma ,\omega \right) $ and $\mathfrak{T}_{T^{\alpha }}^{\ast }\left( 
\widetilde{\sigma },\widetilde{\omega }\right) \approx \mathfrak{T}%
_{T^{\alpha }}^{b^{\ast },\ast }\left( \sigma ,\omega \right) $, and\ since
the Muckenhoupt $A_{2}$ conditions are clearly comparable for strictly
comparable weight pairs, we have the equivalence%
\begin{equation*}
\mathfrak{N}_{T_{\sigma }^{\alpha }}\left( \widetilde{\sigma },\widetilde{%
\omega }\right) \approx \sqrt{\mathfrak{A}_{2}^{\alpha }\left( \sigma
,\omega \right) }+\mathfrak{T}_{T^{\alpha }}^{b}\left( \sigma ,\omega
\right) +\mathfrak{T}_{T^{\alpha }}^{b^{\ast },\ast }\left( \sigma ,\omega
\right) .
\end{equation*}%
Finally, since $0<c\leq \func{Re}b,\func{Re}b^{\ast }\leq C$, we see that $%
\mathfrak{N}_{T_{\sigma }^{\alpha }}\left( \widetilde{\sigma },\widetilde{%
\omega }\right) \approx \mathfrak{N}_{T_{\sigma }^{\alpha }}\left( \sigma
,\omega \right) $, and this completes the proof of (\ref{Tb}).
\end{proof}

Note that the presence of a $\left( b,b^{\ast }\right) $-variant of the weak
boundedness property here would complicate matters, since in general, 
\begin{equation*}
\func{Re}\int_{Q}T^{\alpha }\left( 1_{Q^{\prime }}b\sigma \right) b^{\ast
}d\omega \neq \int_{Q}T^{\alpha }\left( 1_{Q^{\prime }}\func{Re}b\sigma
\right) \func{Re}b^{\ast }d\omega .
\end{equation*}%
To remind the reader of the versatility of even a \emph{global} $Tb$
theorem, we reproduce a proof of the boundedness of the Cauchy integral on $%
C^{1,\delta }$ curves.

\subsubsection{Boundedness of the Cauchy integral on $C^{1,\protect\delta }$
curves}

Here we point out how the above $Tb$ theorem can apply to obtain the
boundedness of the Cauchy integral on $C^{1,\delta }$ curves in the plane
(which can be obtained in many other easy ways as well, see e.g. \cite[%
Section 4 of Chapter VII]{Ste}). Recall that the problem reduces to
boundedness on $L^{2}\left( \mathbb{R}\right) $ of the singular integral
operator $C_{A}$ with kernel%
\begin{equation*}
K_{A}\left( x,y\right) \equiv \frac{1}{x-y+i\left( A\left( x\right) -A\left(
y\right) \right) },
\end{equation*}%
where the curve has graph $\left\{ x+iA\left( x\right) :x\in \mathbb{R}%
\right\} $. Now $b\left( x\right) \equiv 1+iA^{\prime }\left( x\right) $ is
accretive and we have the $b$-testing condition%
\begin{equation*}
\int_{I}\left\vert C_{A}\left( \mathbf{1}_{I}b\right) \left( x\right)
\right\vert ^{2}dx\leq \mathfrak{T}_{H}^{b}\left\vert I\right\vert ,
\end{equation*}%
and its dual. Indeed, if $I=\left[ \alpha ,\beta \right] $, then%
\begin{eqnarray*}
C_{A}\left( \mathbf{1}_{I}b\right) \left( x\right) &=&\int_{\alpha }^{\beta }%
\frac{1+iA^{\prime }\left( y\right) }{x-y+i\left( A\left( x\right) -A\left(
y\right) \right) }dy \\
&=&-\log \left( x-y+i\left( A\left( x\right) -A\left( y\right) \right)
\right) \mid _{\alpha }^{\beta } \\
&=&\log \left( \frac{x-\alpha +i\left( A\left( x\right) -A\left( \alpha
\right) \right) }{x-\beta +i\left( A\left( x\right) -A\left( \beta \right)
\right) }\right) ,
\end{eqnarray*}%
gives 
\begin{equation*}
\left\vert C_{A}\left( \mathbf{1}_{I}b\right) \left( x\right) \right\vert
^{2}\approx \ln \frac{x-\alpha }{\beta -x},\ \ \ \ \ x\in I=\left[ \alpha
,\beta \right] ,
\end{equation*}%
and it follows that%
\begin{equation*}
\int_{I}\left\vert C_{A}\left( \mathbf{1}_{I}b\right) \left( x\right)
\right\vert ^{2}dx\approx \int_{I}\left\vert \ln \frac{x-\alpha }{\beta -x}%
\right\vert ^{2}dx\approx \int_{0}^{\beta -\alpha }\left\vert \ln \frac{x}{%
\beta -\alpha }\right\vert ^{2}dx=\left( \beta -\alpha \right)
\int_{0}^{1}\left\vert \ln w\right\vert ^{2}dw=C\left\vert I\right\vert .
\end{equation*}%
Since the kernel $K_{A}$ is $C^{1,\delta }$, the $Tb$ theorem above applies
with $T=C_{A}$ and $\sigma =\omega =dx$ Lebesgue measure, to show that $%
C_{A} $ is bounded on $L^{2}\left( \mathbb{R}\right) $. Of course this proof
just misses the case of Lipschitz curves since our two weight $Tb$ theorem
does not apply to kernels that fail to be $C^{1,\delta }$.

\section{Proof of the Good-$\protect\lambda $ Lemma}

We will prove the Good-$\lambda $ Lemma by first replacing the quasiweak
boundedness constant on the left hand side of (\ref{g lambda}) with the
indicator/touching constant introduced in (\ref{Ind/touch}) above. To
control the indicator/touching constant, we will need to tweak the usual
good/bad technology of NTV a bit in the following subsection.

\subsection{Good/bad technology\label{g/b tech}}

First we recall the good/bad cube technology of Nazarov, Treil and Volberg 
\cite{Vol} as in \cite{SaShUr7}, but with a small simplification introduced
in the real line by Hyt\"{o}nen in \cite{Hyt2}. This simplification does not
impact the validity of the arguments in \cite{SaShUr6}, but will facilitate
the use of NTV surgery in later subsections.

Following \cite{Hyt2}, we momentarily fix a large positive integer $M\in 
\mathbb{N}$, and consider the tiling of $\mathbb{R}^{n}$ by the family of
cubes $\mathbb{D}_{M}\equiv \left\{ I_{\alpha }^{M}\right\} _{\alpha \in
Z^{n}}$ having side length $2^{-M}$ and given by $I_{\alpha }^{M}\equiv
I_{0}^{M}+2^{-M}\alpha $ where $I_{0}^{M}=\left[ 0,2^{-M}\right) ^{n}$. A 
\emph{dyadic grid} $\mathcal{D}$ built on $\mathbb{D}_{M}$ is\ defined to be
a family of cubes $\mathcal{D}$ satisfying:

\begin{enumerate}
\item Each $I\in \mathcal{D}$ has side length $2^{-\ell }$ for some $\ell
\in \mathbb{Z}$ with $\ell \leq M$, and $I$ is a union of $2^{n\left( M-\ell
\right) }$ cubes from the tiling $\mathbb{D}_{M}$,

\item For $\ell \leq M$, the collection $\mathcal{D}_{\ell }$ of cubes in $%
\mathcal{D}$ having side length $2^{-\ell }$ forms a pairwise disjoint
decomposition of the space $\mathbb{R}^{n}$,

\item Given $I\in \mathcal{D}_{i}$ and $J\in \mathcal{D}_{j}$ with $j\leq
i\leq M$, it is the case that either $I\cap J=\emptyset $ or $I\subset J$.
\end{enumerate}

We now momentarily fix a \emph{negative} integer $N\in -\mathbb{N}$, and
restrict the above grids to cubes of side length at most $2^{-N}$:%
\begin{equation*}
\mathcal{D}^{N}\equiv \left\{ I\in \mathcal{D}:\text{side length of }I\text{
is at most }2^{-N}\right\} \text{.}
\end{equation*}%
We refer to such grids $\mathcal{D}^{N}$ as a (truncated) dyadic grid $%
\mathcal{D}$ built on $\mathbb{D}_{M}$ of size $2^{-N}$. There are now two
traditional means of constructing probability measures on collections of
such dyadic grids.

\textbf{Construction \#1}: Consider first the special case of dimension $n=1$%
. Then for any 
\begin{equation*}
\beta =\{\beta _{i}\}_{i\in _{M}^{N}}\in \omega _{M}^{N}\equiv \left\{
0,1\right\} ^{\mathbb{Z}_{M}^{N}},
\end{equation*}%
where $\mathbb{Z}_{M}^{N}\equiv \left\{ \ell \in \mathbb{Z}:N\leq \ell \leq
M\right\} $, define the dyadic grid $\mathcal{D}_{\beta }$ built on $\mathbb{%
D}_{M}$ of size $2^{-N}$ by 
\begin{equation*}
\mathcal{D}_{\beta }=\left\{ 2^{-\ell }\left( [0,1)+k+\sum_{i:\ \ell <i\leq
M}2^{-i+\ell }\beta _{i}\right) \right\} _{N\leq \ell \leq M,\,k\in {\mathbb{%
Z}}}\ .
\end{equation*}%
Place the uniform probability measure $\rho _{M}^{N}$ on the finite index
space $\omega _{M}^{N}=\left\{ 0,1\right\} ^{\mathbb{Z}_{M}^{N}}$, namely
that which charges each $\beta \in \omega _{M}^{N}$ equally. This
construction is then extended to Euclidean space $\mathbb{R}^{n}$ by taking
products in the usual way and using the product index space $\Omega
_{M}^{N}\equiv \left( \omega _{M}^{N}\right) ^{n}$ and the uniform product
probability measure $\mu _{M}^{N}=\rho _{M}^{N}\times ...\times \rho
_{M}^{N} $.

\textbf{Construction \#2}: Momentarily fix a (truncated) dyadic grid $%
\mathcal{D}$ built on $\mathbb{D}_{M}$ of size $2^{-N}$. For any 
\begin{equation*}
\gamma =\left( \gamma _{1},...,\gamma _{n}\right) \in \Gamma _{M}^{N}\equiv
\left\{ 2^{-M}\mathbb{Z}_{+}^{n}:\left\vert \gamma _{i}\right\vert
<2^{-N}\right\} ,
\end{equation*}%
where $\mathbb{Z}_{+}=\mathbb{N}\cup \left\{ 0\right\} $, define the dyadic
grid $\mathcal{D}^{\gamma }$ built on $\mathbb{D}_{M}$ of size $2^{-N}$ by%
\begin{equation*}
\mathcal{D}^{\gamma }\equiv \mathcal{D}+\gamma .
\end{equation*}%
Place the uniform probability measure $\nu _{M}^{N}$ on the finite index set 
$\Gamma _{M}^{N}$, namely that which charges each multiindex $\gamma $ in $%
\Gamma _{M}^{N}$ equally.

The two probability spaces $\left( \left\{ \mathcal{D}_{\beta }\right\}
_{\beta \in \Omega _{M}^{N}},\mu _{M}^{N}\right) $ and $\left( \left\{ 
\mathcal{D}^{\gamma }\right\} _{\gamma \in \Gamma _{M}^{N}},\nu
_{M}^{N}\right) $ are isomorphic since both collections $\left\{ \mathcal{D}%
_{\beta }\right\} _{\beta \in \Omega _{M}^{N}}$ and $\left\{ \mathcal{D}%
^{\gamma }\right\} _{\gamma \in \Gamma _{M}^{N}}$ describe the set $%
\boldsymbol{A}_{M}^{N}$ of \textbf{all} (truncated) dyadic grids $\mathcal{D}%
^{\gamma }$ built on $\mathbb{D}_{M}$ of size $2^{-N}$, and since both
measures $\mu _{M}^{N}$ and $\nu _{M}^{N}$ are the uniform measure on this
space. Indeed, it suffices to verify this in the case $n=1$. The first
construction may be thought of as being \emph{parameterized by scales} -
each component $\beta _{i}$ in $\beta =\{\beta _{i}\}_{i\in _{M}^{N}}\in
\omega _{M}^{N}$ amounting to a choice of the two possible tilings at level $%
i$ that respect the choice of tiling at the level below - and since any grid
in $\boldsymbol{A}_{M}^{N}$ is determined by a choice of scales , we see
that $\left\{ \mathcal{D}_{\beta }\right\} _{\beta \in \Omega _{M}^{N}}=%
\boldsymbol{A}_{M}^{N}$. The second construction may be thought of as being 
\emph{parameterized by translation} - each $\gamma \in \Gamma _{M}^{N}$
amounting to a choice of translation of the grid $\mathcal{D}$ fixed in
construction \#2\ - and since any grid in $\boldsymbol{A}_{M}^{N}$ is
determined by any of the cubes at the top level, i.e. with side length $%
2^{-N}$, we see that $\left\{ \mathcal{D}^{\gamma }\right\} _{\gamma \in
\Gamma _{M}^{N}}=\boldsymbol{A}_{M}^{N}$ as well, since every cube at the
top level in $\boldsymbol{A}_{M}^{N}$ has the form $Q+\gamma $ for some $%
\gamma \in \Gamma _{M}^{N}$ and $Q\in \mathcal{D}$ at the top level in $%
\boldsymbol{A}_{M}^{N}$ (i.e. every cube at the top level in $\boldsymbol{A}%
_{M}^{N}$ is a union of small cubes in $\mathbb{D}_{M}$, and so must be a
translate of some $Q\in \mathcal{D}$ by an amount $2^{-M}$ times an element
of $\mathbb{Z}_{+}^{n}$). Note also that in all dimensions, $\#\Omega
_{M}^{N}=\#\Gamma _{M}^{N}=2^{n\left( M-N\right) }$. We will use $\mathbb{E}%
_{\Omega _{M}^{N}}$ to denote expectation with respect to this common
probability measure on $\boldsymbol{A}_{M}^{N}$.

The usual NTV probabilistic reduction to `good' cubes will be implemented
below for each positive integer $M$ and each negative integer $N$ assuming
that the functions $f$ and $g$ are supported in a large cube $L$ with $%
\int_{L}fd\sigma =0=\int_{L}gd\omega $, and moreover assuming that $-N$ is
sufficiently large compared to $\ell \left( L\right) $ that the small
probability estimates claimed below hold ($-N>\ell \left( L\right) +\mathbf{r%
}$ will work where $\mathbf{r}$ is the goodness constant), and finally
assuming that $f$ and $g$ are constant on each cube $Q$ in the tiling $%
\mathbb{D}_{M}$. Recall that we can always reduce to the case $%
\int_{L}fd\sigma =0=\int_{L}gd\omega $ by simply subtracting off averages
and controlling the resulting error terms by the testing conditions (see
e.g. \cite{Vol}).

\begin{notation}
For purposes of notation and clarity, we often suppress all reference to $M$
and $N$ in our families of grids, and in the notations $\Omega $ and $\Gamma 
$ for the parameter sets, and we will use $\mathbb{P}_{\Omega }$ and $%
\mathbb{E}_{\Omega }$ to denote probability and expectation, and instead
proceed as if all grids considered are unrestricted. The careful reader can
supply the modifications necessary to handle the assumptions made above on
the grids $\mathcal{D}$ and the functions $f$ and $g$ regarding $M$ and $N$.
In fact, we will exploit the integers $M$ and $N$ explicitly in the
subsubsections on NTV surgery below.
\end{notation}

In the case of one independent family of grids, as is the case here, the
main result is the following \emph{conditional} probability estimate: for
every $I\in \mathcal{P}^{n}$,%
\begin{equation}
\mathbb{P}_{\Omega }\left\{ \mathcal{D}:I\text{ is a \emph{bad} cube in }%
\mathcal{D}\mid I\in \mathcal{D}\right\} \leq C2^{-\varepsilon \mathbf{r}}.
\label{cond prob}
\end{equation}%
Provided we obtain estimates independent of $M$ and $N$, this will be
sufficient for our proof - this follows the procedure with \emph{two}
independent grids initiated by Hyt\"{o}nen for the Hilbert transform
inequality in \cite{Hyt2}. The key point of introducing the two different
parameterizations above of the same probability space, is that construction
\#1 is well-adapted to the reduction to good cubes in a \emph{single}
independent family of grids, as used in the proof of the main theorem in 
\cite{SaShUr6}, which is in turn needed below, while construction \#2
facilitates the use of NTV surgery below when combined with the construction
of $Q$-good \emph{grids,} to which we next turn.

\subsubsection{$Q$-good quasicubes and $Q$-good quasigrids}

We first introduce these notions for usual cubes, and later pass to
quasicubes. Let $Q\in \mathcal{P}^{n}$ be an arbitrary cube in $\mathbb{R}%
^{n}$ with sides parallel to the coordinate axes. For technical reasons
associated to our application below, we also want to consider the `siblings'
of $Q$, i.e. the `triadic children' of $3Q$.

\begin{definition}
\label{cube Qgood}We say that a cube $I\in \mathcal{P}^{n}$ is $Q$\emph{-good%
} if either $\ell \left( I\right) >2^{-\mathbf{\rho }}\ell \left( Q\right) $%
, or for every sibling $Q^{\prime }$ of $Q$, we have 
\begin{equation*}
\limfunc{dist}\left( I,\partial Q^{\prime }\right) \geq \frac{1}{2}\ell
\left( I\right) ^{\varepsilon }\ell \left( Q^{\prime }\right)
^{1-\varepsilon }
\end{equation*}%
when $\ell \left( I\right) \leq 2^{-\mathbf{\rho }}\ell \left( Q\right) $.
We say $I\in \mathcal{P}^{n}$ is $Q$\emph{-bad} if $I$ is \textbf{not} $Q$%
-good.
\end{definition}

Note that for a fixed cube $Q\in \mathcal{P}^{n}$, we do \textbf{not} have a
conditional probability estimate $\mathbb{P}_{\Omega }\left\{ \mathcal{D}%
:I\in \mathcal{D}\text{ and }I\text{ is }Q\text{-bad}\right\} \leq
C2^{-\varepsilon \mathbf{r}}$ since the property of a cube $I$ being $Q$-bad
is independent of which grids $\mathcal{D}$ it belongs to. To rectify this
complication we will introduce below a \emph{second independent} family of
grids - but this second family will also be used to simultaneously
Haar-decompose both $f\in L^{2}\left( \sigma \right) $ and $g\in L^{2}\left(
\omega \right) $\footnote{%
Traditionally, two independent grids are applied to $f$ and $g$ separately,
something we \emph{avoid} since the treatment of functional energy in the
arguments of \cite{SaShUr9}, \cite{SaShUr6} (which we use here) relies on
using a \emph{common} grid for $f$ and $g$.}.

We next wish to capture the idea of a grid $\mathcal{D}$ being `$Q$-\emph{%
good}' with respect to this fixed cube $Q$, and the idea will be to require
that $Q$ is $I$-good for all sufficiently larger cubes $I$ in the grid $%
\mathcal{D}$. Here we \emph{will} obtain a `goodness' estimate in Lemma \ref%
{bad grids} below.

\begin{definition}
\label{crit}Let $\mathbf{r}$ and $\varepsilon $ be goodness constants as in 
\cite{SaShUr7}. For $Q\in \mathcal{P}^{n}$ we declare a grid $\mathcal{D}$
to be $Q$\emph{-good} if for every sibling $Q^{\prime }$ of $Q$ and for
every $I\in \mathcal{D}$ with $\ell \left( I\right) \geq 2^{\mathbf{r}}\ell
\left( Q\right) $, the following holds: the distance from the cube $%
Q^{\prime }$ to the boundary of the cube $I$ satisfies the `deeply embedded'
inequality,%
\begin{equation*}
\limfunc{dist}\left( Q^{\prime },\partial I\right) \geq \frac{1}{2}\ell
\left( Q^{\prime }\right) ^{\varepsilon }\ell \left( I\right)
^{1-\varepsilon }.
\end{equation*}%
We say the grid $\mathcal{D}$ is $Q$\emph{-bad} if it is not $Q$-good.
\end{definition}

Note that $Q$ is fixed in this definition and it is easy to see, using the
translation parameterization in construction \#2 above, that the collection
of grids $\mathcal{D}$ that are $Q$-bad occur with small probability.
Indeed, if $I\supset Q$ has side length at least $2^{\mathbf{r}}$ times that
of $Q$, then the translates of $I$ satisfy $Q\Subset _{\mathbf{r}}I$ with
probability near $1$.

\begin{lemma}
\label{bad grids}Fix a cube $Q\in \mathcal{P}^{n}$. Then $\mathbb{P}_{\Omega
}\left\{ \mathcal{D}:\mathcal{D}\text{ is }Q\text{-bad}\right\} \leq
C2^{-\varepsilon \mathbf{r}}$.
\end{lemma}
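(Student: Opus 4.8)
The plan is to use the translation parameterization of Construction \#2, in which a random grid in $\boldsymbol{A}_{M}^{N}$ is obtained by translating a fixed grid $\mathcal{D}$ by a uniformly random $\gamma \in \Gamma_{M}^{N}$. Fix the cube $Q \in \mathcal{P}^{n}$ and one of its finitely many siblings $Q'$ (there are $3^{n}$ triadic children of $3Q$, so it suffices, at the cost of a constant factor, to estimate the probability for a single sibling and then union-bound). For each scale $\ell$ with $2^{-\ell} \geq 2^{\mathbf{r}}\ell(Q)$, the grid $\mathcal{D}^{\gamma}$ contains exactly one cube $I_{\ell}(\gamma)$ of side length $2^{-\ell}$ that contains $Q'$ (for $-N$ large enough compared to $\ell(Q')$, which we have assumed). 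The grid $\mathcal{D}^{\gamma}$ is $Q$-bad precisely when there exists such a scale $\ell$ for which $\limfunc{dist}(Q',\partial I_{\ell}(\gamma)) < \tfrac12 \ell(Q')^{\varepsilon}\ell(I_{\ell}(\gamma))^{1-\varepsilon}$; so by the union bound over scales it suffices to estimate, for each fixed $\ell$, the probability of this single ``bad at scale $\ell$'' event and then sum a geometric series in $\ell$.

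First I would estimate the probability at a single scale. Fix $\ell$ with $L \equiv 2^{-\ell} \geq 2^{\mathbf{r}}\ell(Q)$. As $\gamma$ ranges uniformly over $\Gamma_{M}^{N}$, the position of $Q'$ relative to the unique containing cube $I_{\ell}(\gamma)$ of side length $L$ is (approximately, up to the discretization at scale $2^{-M}$, which washes out in the limit or can be absorbed into constants) uniformly distributed over that cube. The ``bad'' region is the set of positions of (the center of) $Q'$ lying within distance $\tfrac12 \ell(Q')^{\varepsilon} L^{1-\varepsilon} + \tfrac12\ell(Q')$ of $\partial I_{\ell}(\gamma)$, i.e.\ a boundary collar of $I_{\ell}(\gamma)$ of width $\lesssim \ell(Q')^{\varepsilon}L^{1-\varepsilon}$ (using $\ell(Q') \leq L$). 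The Lebesgue measure of this collar, divided by $L^{n}$, is
\begin{equation*}
\lesssim \; n\,\frac{\ell(Q')^{\varepsilon}L^{1-\varepsilon}}{L} \;=\; n\left(\frac{\ell(Q')}{L}\right)^{\varepsilon} \;\lesssim\; \left(\frac{\ell(Q)}{2^{-\ell}}\right)^{\varepsilon},
\end{equation*}
since $\ell(Q') = \ell(Q)$. Writing $2^{-\ell} = 2^{j}\,2^{\mathbf{r}}\ell(Q)$ with $j \geq 0$, this is $\lesssim 2^{-\varepsilon \mathbf{r}} 2^{-\varepsilon j}$.

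Then I would sum over scales: the probability that $\mathcal{D}^{\gamma}$ is $Q$-bad is at most the sum over $j \geq 0$ (and over the $\lesssim_n 1$ siblings $Q'$) of $C 2^{-\varepsilon\mathbf{r}}2^{-\varepsilon j}$, which is a convergent geometric series summing to $C' 2^{-\varepsilon \mathbf{r}}$, with $C'$ depending only on $n$ and $\varepsilon$. This gives $\mathbb{P}_{\Omega}\{\mathcal{D} : \mathcal{D}\text{ is }Q\text{-bad}\} \leq C 2^{-\varepsilon \mathbf{r}}$ as claimed, uniformly in $M$ and $N$. The main technical nuisance — not a deep obstacle — is the bookkeeping needed to make the ``uniformly distributed position'' statement precise given the discretization at scale $2^{-M}$ and the restriction of $\Gamma_{M}^{N}$ to $|\gamma_i| < 2^{-N}$: one checks that for $-N$ sufficiently large (as already assumed in the setup), each cube $I$ of side length $2^{-\ell} \leq 2^{-N}$ in the grid arises from a proportion of parameters $\gamma$ equal (up to the finite discretization, which only costs a bounded multiplicative factor) to $|I|/(\text{total})$, and the collar-measure computation goes through verbatim; alternatively one can cite the identical probabilistic setup already used in \cite{SaShUr7}, \cite{Hyt2} for the ordinary goodness estimate \eqref{cond prob}. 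Finally, the passage from cubes to quasicubes is purely notational, since the definition of $Q$-good grid and the deeply-embedded inequality are transported through the biLipschitz map $\Omega$ exactly as in the other quasicube definitions, with the comparability $\ell(\Omega Q) = \ell(Q)$ and $|\Omega Q|^{1/n} \approx \ell(Q)$ absorbing the change in constants.
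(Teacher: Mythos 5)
Your proposal is correct and takes essentially the same approach as the paper. The paper gives no formal proof of this lemma --- only the one-sentence remark preceding it, invoking the translation parameterization of Construction \#2 and observing that for each $I\supset Q$ with $\ell(I)\geq 2^{\mathbf{r}}\ell(Q)$ the translates of $I$ satisfy $Q\Subset_{\mathbf{r}}I$ with probability near $1$ --- and your argument supplies exactly the missing details of that sketch: the boundary-collar measure estimate at a single scale, the geometric sum over scales, and the union bound over the $3^{n}$ siblings.
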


The following is our tweaking of the good/bad technology of NTV \cite{Vol}.
Fix a cube $Q\in \mathcal{P}^{n}$ and let $\mathcal{D}$ be randomly
selected. Define linear operators (depending on the grid $\mathcal{D}$), 
\begin{eqnarray*}
\mathsf{P}_{Q;\QTR{up}{\limfunc{good}}}^{\sigma }f &\equiv &\left\{ 
\begin{array}{ccc}
\sum_{I\in \mathcal{D}:\ I\text{ is }\mathbf{r}\text{-good in }\mathcal{D}%
}\bigtriangleup _{I}^{\sigma }f & \text{ if } & \mathcal{D}\text{ is }Q\text{%
-good} \\ 
0 & \text{ if } & \mathcal{D}\text{ is }Q\text{-bad}%
\end{array}%
\right. \,, \\
\mathsf{P}_{Q;\QTR{up}{\limfunc{bad}}}^{\sigma }f &\equiv &f-\mathsf{P}_{Q;%
\QTR{up}{\limfunc{good}}}^{\sigma }f\ ,
\end{eqnarray*}%
and likewise for $\mathsf{P}_{Q;\QTR{up}{\limfunc{good}}}^{\omega }g$ and $%
\mathsf{P}_{Q;\QTR{up}{\limfunc{bad}}}^{\omega }g$.

\begin{proposition}
\label{p.Pgood}Fix a cube $Q\in \mathcal{P}^{n}$. Then we have the estimates 
\begin{eqnarray*}
\mathbb{E}_{\Omega }\left\Vert \mathsf{P}_{Q;\QTR{up}{\limfunc{bad}}%
}^{\sigma }f\right\Vert _{L^{2}(\sigma )} &\leq &C2^{-\frac{\varepsilon 
\mathbf{r}}{2}}\left\Vert f\right\Vert _{L^{2}(\sigma )}, \\
\mathbb{E}_{\Omega }\left\Vert \mathsf{P}_{Q;\QTR{up}{\limfunc{bad}}%
}^{\omega }g\right\Vert _{L^{2}(\omega )} &\leq &C2^{-\frac{\varepsilon 
\mathbf{r}}{2}}\left\Vert g\right\Vert _{L^{2}(\omega )}.
\end{eqnarray*}
\end{proposition}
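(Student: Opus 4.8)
The plan is to reduce the two estimates to the combination of two facts already available: the conditional bad-cube probability estimate \eqref{cond prob}, and the $Q$-bad-grid probability estimate from Lemma \ref{bad grids}. First I would note that, by definition of the operator $\mathsf{P}_{Q;\QTR{up}{\limfunc{bad}}}^{\sigma }$, we have a pointwise (in $\mathcal{D}$) dichotomy: if $\mathcal{D}$ is $Q$-bad then $\mathsf{P}_{Q;\QTR{up}{\limfunc{bad}}}^{\sigma }f=f$, while if $\mathcal{D}$ is $Q$-good then $\mathsf{P}_{Q;\QTR{up}{\limfunc{bad}}}^{\sigma }f=\sum_{I\in \mathcal{D}:\ I\text{ is }\mathbf{r}\text{-bad}}\bigtriangleup _{I}^{\sigma }f$, the sum of martingale differences over the $\mathbf{r}$-bad cubes of $\mathcal{D}$. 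Accordingly I would split the expectation as
\begin{equation*}
\mathbb{E}_{\Omega }\left\Vert \mathsf{P}_{Q;\QTR{up}{\limfunc{bad}}}^{\sigma }f\right\Vert _{L^{2}(\sigma )}\leq \mathbb{E}_{\Omega }\left( \mathbf{1}_{\{\mathcal{D}\ Q\text{-bad}\}}\left\Vert f\right\Vert _{L^{2}(\sigma )}\right) +\mathbb{E}_{\Omega }\left\Vert \sum_{I\in \mathcal{D}:\ I\text{ is }\mathbf{r}\text{-bad}}\bigtriangleup _{I}^{\sigma }f\right\Vert _{L^{2}(\sigma )},
\end{equation*}
and handle the two pieces separately.

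For the first piece, Lemma \ref{bad grids} gives $\mathbb{P}_{\Omega }\{\mathcal{D}:\mathcal{D}\text{ is }Q\text{-bad}\}\leq C2^{-\varepsilon \mathbf{r}}$, so this term is bounded by $C2^{-\varepsilon \mathbf{r}}\left\Vert f\right\Vert _{L^{2}(\sigma )}\leq C2^{-\frac{\varepsilon \mathbf{r}}{2}}\left\Vert f\right\Vert _{L^{2}(\sigma )}$, which is already of the required form (indeed stronger). For the second piece, which is the standard NTV bad-part estimate, I would use the orthogonality of the Haar projections $\bigtriangleup _{I}^{\sigma }$ together with the Cauchy--Schwarz inequality $\mathbb{E}X\leq (\mathbb{E}X^{2})^{1/2}$: writing $X=\bigl\Vert \sum_{I\ \mathbf{r}\text{-bad}}\bigtriangleup _{I}^{\sigma }f\bigr\Vert_{L^{2}(\sigma)}$ we have $X^{2}=\sum_{I\in\mathcal{D}}\mathbf{1}_{\{I\ \mathbf{r}\text{-bad in }\mathcal{D}\}}\Vert\bigtriangleup_{I}^{\sigma}f\Vert_{L^{2}(\sigma)}^{2}$. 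Taking expectations and interchanging sum and expectation, the crucial point is that for each fixed cube $I\in\mathcal{P}^{n}$ the conditional probability estimate \eqref{cond prob} yields $\mathbb{P}_{\Omega}\{I\text{ is bad in }\mathcal{D}\mid I\in\mathcal{D}\}\leq C2^{-\varepsilon\mathbf{r}}$; since $\bigtriangleup_{I}^{\sigma}f$ depends only on $I$ and not further on $\mathcal{D}$ (given $I\in\mathcal{D}$, the children and hence $\bigtriangleup_{I}^{\sigma}$ are determined), averaging over the grids containing $I$ produces
\begin{equation*}
\mathbb{E}_{\Omega}X^{2}\leq C2^{-\varepsilon\mathbf{r}}\,\mathbb{E}_{\Omega}\sum_{I\in\mathcal{D}}\Vert\bigtriangleup_{I}^{\sigma}f\Vert_{L^{2}(\sigma)}^{2}=C2^{-\varepsilon\mathbf{r}}\Vert f\Vert_{L^{2}(\sigma)}^{2},
\end{equation*}
so that $\mathbb{E}_{\Omega}X\leq C2^{-\frac{\varepsilon\mathbf{r}}{2}}\Vert f\Vert_{L^{2}(\sigma)}$. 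Combining the two pieces gives the first claimed bound, and the second follows verbatim with $\sigma,f$ replaced by $\omega,g$.

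The main obstacle, and the only point requiring genuine care, is the justification of the step ``$\bigtriangleup_{I}^{\sigma}f$ depends only on $I$'' that lets us pass from the unconditional expectation to the conditional probability in \eqref{cond prob}: one must be careful that the event $\{I\text{ is bad}\}$ and the value $\bigtriangleup_{I}^{\sigma}f$ are both measurable with respect to the sub-$\sigma$-algebra generated by ``which cubes of a given generation lie in $\mathcal{D}$,'' and that badness of $I$ is a genuinely conditional event (a cube $I$ is bad or good depending on its position relative to \emph{all} the ancestors it would have in a grid containing it, which is exactly what \eqref{cond prob} quantifies). This is the familiar NTV/Hyt\"onen bookkeeping, and since we are using a \emph{single} independent family of grids parameterized by construction \#1 — which is precisely the parameterization ``well-adapted to the reduction to good cubes in a single independent family of grids'' noted after \eqref{cond prob} — the estimate \eqref{cond prob} applies directly and the argument goes through exactly as in \cite{SaShUr6}. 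No interaction with the fixed cube $Q$ enters the second piece; $Q$ only enters through Lemma \ref{bad grids} in the first piece, and there the independence of the two families (or equivalently the translation freedom in construction \#2) is what makes that lemma available.
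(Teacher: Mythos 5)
Your proof is correct and follows essentially the same route as the paper: a split according to whether the grid $\mathcal{D}$ is $Q$-good or $Q$-bad, with the $Q$-bad piece controlled by Lemma \ref{bad grids} and the remaining bad-cube piece controlled by the conditional probability estimate (\ref{cond prob}) together with Haar orthogonality. The only (cosmetic) difference is that the paper works with $\mathbb{E}_{\Omega }\Vert\cdot\Vert ^{2}$ throughout and extracts the square root at the end, whereas you apply the triangle inequality to $\mathbb{E}_{\Omega }\Vert\cdot\Vert$ first and then use Cauchy--Schwarz only on the second term.
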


\begin{proof}
We have from (\ref{cond prob}) and Lemma \ref{bad grids} that 
\begin{align*}
\mathbb{E}_{\Omega }\left\Vert \mathsf{P}_{\QTR{up}{\limfunc{bad}}}^{\sigma
}f\right\Vert _{L^{2}(\sigma )}^{2}& =\mathbb{E}_{\Omega }\left( \mathbf{1}%
_{\left\{ \mathcal{D}\text{ is }Q\text{-good}\right\} }\sum_{I\in \mathcal{D}%
\text{ is bad}}\left\Vert \bigtriangleup _{I}^{\sigma }f\right\Vert
_{L^{2}\left( \sigma \right) }^{2}\right) +\mathbb{E}_{\Omega }\left( 
\mathbf{1}_{\left\{ \mathcal{D}\text{ is }Q\text{-bad}\right\} }\sum_{I\in 
\mathcal{D}}\left\Vert \bigtriangleup _{I}^{\sigma }f\right\Vert
_{L^{2}\left( \sigma \right) }^{2}\right) \\
& \leq C2^{-\varepsilon \mathbf{r}}\sum_{I\in \mathcal{D}}\left\Vert
\bigtriangleup _{I}^{\sigma }f\right\Vert _{L^{2}\left( \sigma \right) }^{2}+%
\mathbb{E}_{\Omega }\left( \mathbf{1}_{\left\{ \mathcal{D}\text{ is }Q\text{%
-bad}\right\} }\right) \sum_{I\in \mathcal{D}}\left\Vert \bigtriangleup
_{I}^{\sigma }f\right\Vert _{L^{2}\left( \sigma \right) }^{2}\lesssim
C2^{-\varepsilon \mathbf{r}}\left\Vert f\right\Vert _{L^{2}(\sigma )}^{2}\,.
\end{align*}
\end{proof}

From this we conclude that there is an absolute choice of $\mathbf{r}$
depending on $0<\varepsilon <1$ so that the following holds. Let $%
T\;:\;L^{2}(\sigma )\rightarrow L^{2}(\omega )$ be a bounded linear
operator, and let $Q\in \mathcal{P}^{n}$ be a fixed cube. We then have 
\begin{equation}
\left\Vert T\right\Vert _{L^{2}(\sigma )\rightarrow L^{2}(\omega )}\leq
2\sup_{\left\Vert f\right\Vert _{L^{2}(\sigma )}=1}\sup_{\left\Vert
g\right\Vert _{L^{2}(\omega )}=1}\mathbb{E}_{\Omega }\lvert \left\langle T%
\mathsf{P}_{Q;\QTR{up}{\limfunc{good}}}^{\sigma }f,\mathsf{P}_{Q;\QTR{up}{%
\limfunc{good}}}^{\omega }g\right\rangle _{\omega }\rvert \,.  \label{Tgood}
\end{equation}%
Indeed, we can choose $f\in L^{2}(\sigma )$ of norm one, and $g\in
L^{2}(\omega )$ of norm one so that 
\begin{align*}
\left\Vert T\right\Vert _{L^{2}(\sigma )\rightarrow L^{2}(\omega )}&
=\left\langle Tf,g\right\rangle _{\omega } \\
& \leq \mathbb{E}_{\Omega }\lvert \left\langle T\mathsf{P}_{Q;\QTR{up}{%
\limfunc{good}}}^{\sigma }f,\mathsf{P}_{Q;\QTR{up}{\limfunc{good}}}^{\omega
}g\right\rangle _{\omega }\rvert +\mathbb{E}_{\Omega }\lvert \left\langle T%
\mathsf{P}_{Q;\QTR{up}{\limfunc{bad}}}^{\sigma }f,\mathsf{P}_{Q;\QTR{up}{%
\limfunc{good}}}^{\omega }g\right\rangle _{\omega }\rvert \\
& \quad +\mathbb{E}_{\Omega }\lvert \left\langle T\mathsf{P}_{Q;\QTR{up}{%
\limfunc{good}}}^{\sigma }f,\mathsf{P}_{Q;\QTR{up}{\limfunc{bad}}}^{\omega
}g\right\rangle _{\omega }\rvert +\mathbb{E}_{\Omega }\lvert \left\langle T%
\mathsf{P}_{Q;\QTR{up}{\limfunc{bad}}}^{\sigma }f,\mathsf{P}_{Q;\QTR{up}{%
\limfunc{bad}}}^{\omega }g\right\rangle _{\omega }\rvert \\
& \leq \mathbb{E}_{\Omega }\lvert \left\langle T\mathsf{P}_{Q;\QTR{up}{%
\limfunc{good}}}^{\sigma }f,\mathsf{P}_{Q;\QTR{up}{\limfunc{good}}}^{\omega
}g\right\rangle _{\omega }\rvert +3C\cdot 2^{-\frac{\mathbf{r}\varepsilon }{%
16}}\left\Vert T\right\Vert _{L^{2}(\sigma )\rightarrow L^{2}(\omega )}\ ,
\end{align*}%
And this proves (\ref{Tgood}) for $\mathbf{r}$ sufficiently large depending
on $\varepsilon >0$.

Clearly, all of this extends automatically to the quasiworld.

\begin{description}
\item[Implication] \emph{Given a quasicube }$Q\in \Omega \mathcal{P}^{n}$%
\emph{, }$i$\emph{t suffices to consider only }$Q$\emph{-good quasigrids and 
}$Q$\emph{-good} \emph{quasicubes in these quasigrids, and to prove an
estimate for $\left\Vert T_{\sigma }\right\Vert _{L^{2}(\sigma )\rightarrow
L^{2}(\omega )}$ that is independent of these assumptions.}
\end{description}

\subsection{Control of the indicator/touching property}

Recall the indicator/touching constant $\mathfrak{I}_{T^{\alpha }}$ defined
in (\ref{Ind/touch}) above. Here we will prove that%
\begin{equation}
\mathfrak{I}_{T^{\alpha }}\leq C_{\alpha }\left( \frac{1}{\lambda }\sqrt{%
\mathfrak{A}_{2}^{\alpha }}+\mathfrak{T}_{T^{\alpha }}+\mathfrak{T}%
_{T^{\alpha }}^{\ast }+\mathcal{E}_{\alpha }^{\limfunc{strong}}+\mathcal{E}%
_{\alpha }^{\limfunc{strong},\ast }+\sqrt[4]{\lambda }\mathfrak{N}%
_{T^{\alpha }}\right) ,  \label{I/T}
\end{equation}%
from which it easily follows that we have the same inequality for the weak
boundedness property constant $\mathcal{WBP}_{T^{\alpha }}$ defined in (\ref%
{def WBP}) above,%
\begin{equation}
\mathcal{WBP}_{T^{\alpha }}\leq C_{\alpha }\left( \frac{1}{\lambda }\sqrt{%
\mathfrak{A}_{2}^{\alpha }}+\mathfrak{T}_{T^{\alpha }}+\mathfrak{T}%
_{T^{\alpha }}^{\ast }+\mathcal{E}_{\alpha }^{\limfunc{strong}}+\mathcal{E}%
_{\alpha }^{\limfunc{strong},\ast }+\sqrt[4]{\lambda }\mathfrak{N}%
_{T^{\alpha }}\right) .  \label{WBP}
\end{equation}%
Indeed an elementary argument shows that $\mathcal{WBP}_{T^{\alpha
}}\lesssim \mathfrak{I}_{T^{\alpha }}+\sqrt{\mathfrak{A}_{2}^{\alpha }}+%
\mathfrak{T}_{T^{\alpha }}$. For the proof of (\ref{I/T}) we assume the
reader is already familiar with the proof of the main theorem in \cite%
{SaShUr6} or \cite{SaShUr9}, and we now review the parts of this proof that
are pertinent here.

We first recall the basic setup in \cite{SaShUr6}. Let $\Omega \mathcal{D}%
^{\sigma }=\Omega \mathcal{D}^{\omega }$ be a quasigrid on $\mathbb{R}^{n}$,
and let $\left\{ h_{I}^{\sigma ,a}\right\} _{I\in \Omega \mathcal{D}^{\sigma
},\ a\in \Gamma _{n}}$ and $\left\{ h_{J}^{\omega ,b}\right\} _{J\in \Omega 
\mathcal{D}^{\omega },\ b\in \Gamma _{n}}$ be corresponding quasiHaar bases,
so that $f\in L^{2}\left( \sigma \right) $ and $g\in L^{2}\left( \omega
\right) $ can be written $f=f_{\limfunc{good}}+f_{\limfunc{bad}}$ and $g=g_{%
\limfunc{good}}+g_{\limfunc{bad}}$ where 
\begin{eqnarray*}
f &=&\sum_{I\in \Omega \mathcal{D}^{\sigma }}\bigtriangleup _{I}^{\sigma }f%
\text{ and }g=\sum_{J\in \Omega \mathcal{D}^{\omega }\text{ }}\bigtriangleup
_{J}^{\omega }g\ , \\
f_{\limfunc{good}} &=&\sum_{I\in \Omega \mathcal{D}_{\limfunc{good}}^{\sigma
}}\bigtriangleup _{I}^{\sigma }f\text{ and }g_{\limfunc{good}}=\sum_{J\in
\Omega \mathcal{D}_{\limfunc{good}}^{\omega }\text{ }}\bigtriangleup
_{J}^{\omega }g\ ,
\end{eqnarray*}%
and where $\Omega \mathcal{D}_{\limfunc{good}}^{\sigma }=\Omega \mathcal{D}_{%
\limfunc{good}}^{\omega }$ is the $\left( \mathbf{r},\varepsilon \right) $%
-good subgrid, and where the quasiHaar projections $\bigtriangleup
_{I}^{\sigma }f_{\limfunc{good}}$ and $\bigtriangleup _{J}^{\omega }g_{%
\limfunc{good}}$ \emph{vanish} if the quasicubes $I$ and $J$ are not good in 
$\Omega \mathcal{D}^{\sigma }=\Omega \mathcal{D}^{\omega }$. Note that we
use a \emph{single} independent family of grids $\Omega \mathcal{D}^{\sigma
}=\Omega \mathcal{D}^{\omega }$ and only include the different superscripts $%
\sigma $ and $\omega $ to emphasize which measure the grid is being used
with in a given situation.

\begin{remark}
In \cite{SaShUr9} and \cite{SaShUr6}, the quasiHaar projections $%
\bigtriangleup _{I}^{\sigma }f_{\limfunc{good}}$ and $\bigtriangleup
_{J}^{\omega }g_{\limfunc{good}}$ are required to vanish if the quasicubes $I
$ and $J$ are not $\mathbf{\tau }$-good in $\Omega \mathcal{D}^{\sigma
}=\Omega \mathcal{D}^{\omega }$, where a quasicube $I$ is $\mathbf{\tau }$%
-good in a quasigrid $\Omega \mathcal{D}$ if $I$ together with its children
and its ancestors up to order $\mathbf{\tau }$ are all good. This more
restrictive condition doesn't affect what is done here.
\end{remark}

For future reference note that the argument in \cite{SaShUr6} applies just
as well to the smaller projections $\mathsf{P}_{Q;\QTR{up}{\limfunc{good}}%
}^{\sigma }f$ and $\mathsf{P}_{Q;\QTR{up}{\limfunc{good}}}^{\omega }g$ in
place of $f_{\limfunc{good}}$ and $g_{\limfunc{good}}$ respectively. We fix $%
f=f_{\limfunc{good}}$ and $g=g_{\limfunc{good}}$. For now we continue to
work with general functions $f$ and $g$ and the projections $f_{\limfunc{good%
}}$ and $g_{\limfunc{good}}$, but keeping in mind that in order to prove (%
\ref{I/T}), we will later specialize to the cases of indicator functions $f=%
\mathbf{1}_{Q}$ and $g=\mathbf{1}_{R}$, and we will then also include the
restriction to $Q$-good grids $\Omega \mathcal{D}_{Q;\limfunc{good}}$ and
projections $\mathsf{P}_{Q;\QTR{up}{\limfunc{good}}}^{\sigma }f$ and $%
\mathsf{P}_{Q;\QTR{up}{\limfunc{good}}}^{\omega }g$ for a fixed quasicube $Q$
- the quasicube $Q$ in the projection $\mathsf{P}_{Q;\QTR{up}{\limfunc{good}}%
}^{\sigma }f$ is chosen to coincide with the quasicube $Q$ in the indicator $%
\mathbf{1}_{Q}$ in order to achieve the three critical reductions in
Subsubsecion \ref{three crit} below. Continuing with \cite{SaShUr9}, \cite%
{SaShUr6}, we then proved there the bilinear inequality 
\begin{eqnarray}
\left\vert \mathcal{T}^{\alpha }\left( f,g\right) \right\vert &=&\left\vert
\sum_{I\in \Omega \mathcal{D}_{\limfunc{good}}^{\sigma }\text{ and }J\in
\Omega \mathcal{D}_{\limfunc{good}}^{\omega }}\mathcal{T}^{\alpha }\left(
\bigtriangleup _{I}^{\sigma }f,\bigtriangleup _{J}^{\omega }g\right)
\right\vert  \label{bilin} \\
&\leq &C_{\alpha }\left( \sqrt{\mathfrak{A}_{2}^{\alpha }}+\mathfrak{T}%
_{T^{\alpha }}+\mathfrak{T}_{T^{\alpha }}^{\ast }+\mathcal{E}_{\alpha }^{%
\limfunc{strong}}+\mathcal{E}_{\alpha }^{\limfunc{strong},\ast }+\mathcal{WBP%
}_{T^{\alpha }}\right) \left\Vert f\right\Vert _{L^{2}\left( \sigma \right)
}\left\Vert g\right\Vert _{L^{2}\left( \omega \right) },  \notag
\end{eqnarray}%
uniformly over grids $\mathcal{D}$, and we now discuss the salient features
of this proof for us.

As in \cite{SaShUr9}, \cite{SaShUr6} let 
\begin{eqnarray*}
\mathcal{NTV}_{\alpha } &\equiv &\sqrt{\mathfrak{A}_{2}^{\alpha }}+\mathfrak{%
T}_{T^{\alpha }}+\mathfrak{T}_{T^{\alpha }}^{\ast }+\mathcal{WBP}_{T^{\alpha
}}\ , \\
\mathfrak{A}_{2}^{\alpha } &\equiv &\mathcal{A}_{2}^{\alpha }+\mathcal{A}%
_{2}^{\alpha ,\ast }+A_{2}^{\alpha ,\limfunc{punct}}+A_{2}^{\alpha ,\ast ,%
\limfunc{punct}}\ ,
\end{eqnarray*}%
and recall the following brief schematic diagram of the decompositions
involved in the proof given in \cite{SaShUr6}, with bounds in $\fbox{}$: 
\begin{equation*}
\fbox{$%
\begin{array}{ccccccc}
\left\langle T_{\sigma }^{\alpha }f,g\right\rangle _{\omega } &  &  &  &  & 
&  \\ 
\downarrow &  &  &  &  &  &  \\ 
\mathsf{B}_{\Subset _{\mathbf{\rho }}}\left( f,g\right) & + & \mathsf{B}_{_{%
\mathbf{\rho }}\Supset }\left( f,g\right) & + & \mathsf{B}_{\cap }\left(
f,g\right) & + & \mathsf{B}_{\diagup }\left( f,g\right) \\ 
\downarrow &  & \fbox{dual} &  & \fbox{$\mathcal{NTV}_{\alpha }$} &  & \fbox{%
$\mathcal{NTV}_{\alpha }$} \\ 
\downarrow &  &  &  &  &  &  \\ 
\mathsf{T}_{\limfunc{diagonal}}\left( f,g\right) & + & \mathsf{T}_{\limfunc{%
far}\limfunc{below}}\left( f,g\right) & + & \mathsf{T}_{\limfunc{far}%
\limfunc{above}}\left( f,g\right) & + & \mathsf{T}_{\limfunc{disjoint}%
}\left( f,g\right) \\ 
\downarrow &  & \downarrow &  & \fbox{$\emptyset $} &  & \fbox{$\emptyset $}
\\ 
\downarrow &  & \downarrow &  &  &  &  \\ 
\mathsf{B}_{\Subset _{\mathbf{\rho }}}^{A}\left( f,g\right) &  & \mathsf{T}_{%
\limfunc{far}\limfunc{below}}^{1}\left( f,g\right) & + & \mathsf{T}_{%
\limfunc{far}\limfunc{below}}^{2}\left( f,g\right) &  &  \\ 
\downarrow &  & \fbox{$\mathcal{NTV}_{\alpha }+\mathcal{E}_{\alpha }^{%
\limfunc{strong}}$} &  & \fbox{$\mathcal{NTV}_{\alpha }$} &  &  \\ 
\downarrow &  &  &  &  &  &  \\ 
\mathsf{B}_{stop}^{A}\left( f,g\right) & + & \mathsf{B}_{paraproduct}^{A}%
\left( f,g\right) & + & \mathsf{B}_{neighbour}^{A}\left( f,g\right) &  &  \\ 
\fbox{$\mathcal{E}_{\alpha }^{\limfunc{strong}}+\sqrt{A_{2}^{\alpha }}$} & 
& \fbox{$\mathfrak{T}_{T^{\alpha }}$} &  & \fbox{$\sqrt{A_{2}^{\alpha }}$} & 
& 
\end{array}%
$}
\end{equation*}%
With reference to this diagram, we now make a sweeping and crucial claim.

The \textbf{only} two places in our proof of the main theorem in \cite%
{SaShUr6}\ where the \textbf{weak boundedness property} $\mathcal{WBP}%
_{T^{\alpha }}$ is used, is

\begin{enumerate}
\item in proving the estimates for terms $A_{1}$ and $A_{2}$ involving $%
\left\langle T_{\sigma }^{\alpha }\left( \bigtriangleup _{I}^{\sigma
}f\right) ,\bigtriangleup _{J}^{\omega }g\right\rangle _{\omega }$ that
arise in estimating the form $\mathsf{B}_{\diagup }\left( f,g\right) $ at
the top right of the schematic diagram, and

\item and in the estimates for the inner products $\left\langle T_{\sigma
}^{\alpha }\left( \bigtriangleup _{I}^{\sigma }f\right) ,\bigtriangleup
_{J}^{\omega }g\right\rangle _{\omega }$ in the form $\mathsf{T}_{\limfunc{%
far}\limfunc{below}}^{2}\left( f,g\right) $ for which $I$ are $J$ are close
in both scale and position,

\item and \emph{even then} in these two cases, only for certain child
quasicubes $I_{\theta }$ and $J_{\theta ^{\prime }}$ when they \emph{touch},
i.e. their interiors are disjoint but their closures intersect (even in just
a point). In all other instances where $\mathcal{NTV}_{\alpha }$ appears in
the schematic diagram, the weak boundedness property is \emph{not} used.
\end{enumerate}

In order to make the application of the quasiweak boundedness property in
these arguments clear, we reproduce the relevant portions of the arguments
from \cite{SaShUr6} that deal with the forms $\mathsf{B}_{\diagup }\left(
f,g\right) $ and $\mathsf{T}_{\limfunc{far}\limfunc{below}}^{2}\left(
f,g\right) $. Recall also that the parameters $\mathbf{\rho ,\tau ,r}$ in 
\cite[Definition 12 on page 40]{SaShUr6} were fixed to satisfy%
\begin{equation*}
\mathbf{\tau >r}\text{ and }\mathbf{\rho >\tau +r\ }.
\end{equation*}

\bigskip

\textbf{1}: Here is the beginning of the proof of (6.1) on page 28 dealing
with $\mathsf{B}_{\diagup }\left( f,g\right) $ in the statement of Lemma 9
in \cite{SaShUr6}.

\bigskip

\textbf{Extract from pages 28 and 29 of \cite{SaShUr6}:}

\textit{Note that in (6.1) we have used the parameter }$\mathbf{\rho }$%
\textit{\ in the exponent rather than }$\mathbf{r}$\textit{, and this is
possible because the arguments we use here only require that there are
finitely many levels of scale separating }$I$\textit{\ and }$J$\textit{. To
handle this term we first decompose it into}%
\begin{eqnarray*}
&&\left\{ \sum_{\substack{ \left( I,J\right) \in \Omega \mathcal{D}^{\sigma
}\times \Omega \mathcal{D}^{\omega }:\ J\subset 3I  \\ 2^{-\mathbf{\rho }%
}\ell \left( I\right) \leq \ell \left( J\right) \leq 2^{\mathbf{\rho }}\ell
\left( I\right) }}+\sum_{\substack{ \left( I,J\right) \in \Omega \mathcal{D}%
^{\sigma }\times \Omega \mathcal{D}^{\omega }:\ I\subset 3J  \\ 2^{-\mathbf{%
\rho }}\ell \left( I\right) \leq \ell \left( J\right) \leq 2^{\mathbf{\rho }%
}\ell \left( I\right) }}+\sum_{\substack{ \left( I,J\right) \in \Omega 
\mathcal{D}^{\sigma }\times \Omega \mathcal{D}^{\omega }  \\ 2^{-\mathbf{%
\rho }}\ell \left( I\right) \leq \ell \left( J\right) \leq 2^{\mathbf{\rho }%
}\ell \left( I\right)  \\ J\not\subset 3I\text{ and }I\not\subset 3J}}%
\right\} \left\vert \left\langle T_{\sigma }^{\alpha }\left( \bigtriangleup
_{I}^{\sigma }f\right) ,\bigtriangleup _{J}^{\omega }g\right\rangle _{\omega
}\right\vert \\
&&\ \ \ \ \ \ \ \ \ \ \equiv A_{1}+A_{2}+A_{3}.
\end{eqnarray*}%
\textit{The proof of the bound for term }$A_{3}$\textit{\ is similar to that
of the bound for the left side of (6.2), and so we will defer the bound for }%
$A_{3}$\textit{\ until after (6.2) has been proved.}

\textit{We now consider term }$A_{1}$\textit{\ as term }$A_{2}$\textit{\ is
symmetric. To handle this term we will write the quasiHaar functions }$%
h_{I}^{\sigma }$\textit{\ and }$h_{J}^{\omega }$\textit{\ as linear
combinations of the indicators of the children of their supporting
quasicubes, denoted }$I_{\theta }$\textit{\ and }$J_{\theta ^{\prime }}$%
\textit{\ respectively. Then we use the quasitesting condition on }$%
I_{\theta }$\textit{\ and }$J_{\theta ^{\prime }}$\textit{\ when they
overlap, i.e. their interiors intersect; we use the quasiweak boundedness
property on }$I_{\theta }$\textit{\ and }$J_{\theta ^{\prime }}$\textit{\
when they touch, i.e. their interiors are disjoint but their closures
intersect (even in just a point); and finally we use the }$A_{2}^{\alpha }$%
\textit{\ condition when }$I_{\theta }$\textit{\ and }$J_{\theta ^{\prime }}$%
\textit{\ are separated, i.e. their closures are disjoint. We will suppose
initially that the side length of }$J$\textit{\ is at most the side length }$%
I$\textit{, i.e. }$\ell \left( J\right) \leq \ell \left( I\right) $\textit{,
the proof for }$J=\pi I$\textit{\ being similar but for one point mentioned
below. So suppose that }$I_{\theta }$\textit{\ is a child of }$I$\textit{\
and that }$J_{\theta ^{\prime }}$\textit{\ is a child of }$J$\textit{. If }$%
J_{\theta ^{\prime }}\subset I_{\theta }$\textit{\ we have from (\ref{useful
Haar}) that,} 
\begin{eqnarray*}
\left\vert \left\langle T_{\sigma }^{\alpha }\left( \mathbf{1}_{I_{\theta
}}\bigtriangleup _{I}^{\sigma }f\right) ,\mathbf{1}_{J_{\theta ^{\prime
}}}\bigtriangleup _{J}^{\omega }g\right\rangle _{\omega }\right\vert
&\lesssim &\sup_{a,a^{\prime }\in \Gamma _{n}}\frac{\left\vert \left\langle
f,h_{I}^{\sigma ,a}\right\rangle _{\sigma }\right\vert }{\sqrt{\left\vert
I_{\theta }\right\vert _{\sigma }}}\left\vert \left\langle T_{\sigma
}^{\alpha }\left( \mathbf{1}_{I_{\theta }}\right) ,\mathbf{1}_{J_{\theta
^{\prime }}}\right\rangle _{\omega }\right\vert \frac{\left\vert
\left\langle g,h_{J}^{\omega ,a^{\prime }}\right\rangle _{\omega
}\right\vert }{\sqrt{\left\vert J_{\theta ^{\prime }}\right\vert _{\omega }}}
\\
&\lesssim &\sup_{a,a^{\prime }\in \Gamma _{n}}\frac{\left\vert \left\langle
f,h_{I}^{\sigma ,a}\right\rangle _{\sigma }\right\vert }{\sqrt{\left\vert
I_{\theta }\right\vert _{\sigma }}}\left( \int_{J_{\theta ^{\prime
}}}\left\vert T_{\sigma }^{\alpha }\left( \mathbf{1}_{I_{\theta }}\right)
\right\vert ^{2}d\omega \right) ^{\frac{1}{2}}\left\vert \left\langle
g,h_{J}^{\omega ,a^{\prime }}\right\rangle _{\omega }\right\vert \\
&\lesssim &\sup_{a,a^{\prime }\in \Gamma _{n}}\frac{\left\vert \left\langle
f,h_{I}^{\sigma ,a}\right\rangle _{\sigma }\right\vert }{\sqrt{\left\vert
I_{\theta }\right\vert _{\sigma }}}\mathfrak{T}_{T_{\alpha }}\left\vert
I_{\theta }\right\vert _{\sigma }^{\frac{1}{2}}\left\vert \left\langle
g,h_{J}^{\omega ,a^{\prime }}\right\rangle _{\omega }\right\vert \\
&\lesssim &\sup_{a,a^{\prime }\in \Gamma _{n}}\mathfrak{T}_{T_{\alpha
}}\left\vert \left\langle f,h_{I}^{\sigma ,a}\right\rangle _{\sigma
}\right\vert \left\vert \left\langle g,h_{J}^{\omega ,a^{\prime
}}\right\rangle _{\omega }\right\vert .
\end{eqnarray*}%
\textit{The point referred to above is that when }$J=\pi I$\textit{\ we
write }$\left\langle T_{\sigma }^{\alpha }\left( \mathbf{1}_{I_{\theta
}}\right) ,\mathbf{1}_{J_{\theta ^{\prime }}}\right\rangle _{\omega
}=\left\langle \mathbf{1}_{I_{\theta }},T_{\omega }^{\alpha ,\ast }\left( 
\mathbf{1}_{J_{\theta ^{\prime }}}\right) \right\rangle _{\sigma }$\textit{\
and get the dual quasitesting constant }$T_{T_{\alpha }}^{\ast }$\textit{.
If }$J_{\theta ^{\prime }}$\textit{\ and }$I_{\theta }$\textit{\ touch, then 
}$\ell \left( J_{\theta ^{\prime }}\right) \leq \ell \left( I_{\theta
}\right) $\textit{\ and we have }$J_{\theta ^{\prime }}\subset 3I_{\theta
}\setminus I_{\theta }$\textit{, and so}%
\begin{eqnarray}
\left\vert \left\langle T_{\sigma }^{\alpha }\left( \mathbf{1}_{I_{\theta
}}\bigtriangleup _{I}^{\sigma }f\right) ,\mathbf{1}_{J_{\theta ^{\prime
}}}\bigtriangleup _{J}^{\omega }g\right\rangle _{\omega }\right\vert
&\lesssim &\sup_{a,a^{\prime }\in \Gamma _{n}}\frac{\left\vert \left\langle
f,h_{I}^{\sigma ,a}\right\rangle _{\sigma }\right\vert }{\sqrt{\left\vert
I_{\theta }\right\vert _{\sigma }}}\left\vert \left\langle T_{\sigma
}^{\alpha }\left( \mathbf{1}_{I_{\theta }}\right) ,\mathbf{1}_{J_{\theta
^{\prime }}}\right\rangle _{\omega }\right\vert \frac{\left\vert
\left\langle g,h_{J}^{\omega ,a^{\prime }}\right\rangle _{\omega
}\right\vert }{\sqrt{\left\vert J_{\theta ^{\prime }}\right\vert _{\omega }}}
\label{final display} \\
&\lesssim &\sup_{a,a^{\prime }\in \Gamma _{n}}\frac{\left\vert \left\langle
f,h_{I}^{\sigma ,a}\right\rangle _{\sigma }\right\vert }{\sqrt{\left\vert
I_{\theta }\right\vert _{\sigma }}}\mathcal{WBP}_{T^{\alpha }}\sqrt{%
\left\vert I_{\theta }\right\vert _{\sigma }\left\vert J_{\theta ^{\prime
}}\right\vert _{\omega }}\frac{\left\vert \left\langle g,h_{J}^{\omega
,a^{\prime }}\right\rangle _{\omega }\right\vert }{\sqrt{\left\vert
J_{\theta ^{\prime }}\right\vert _{\omega }}}  \notag \\
&=&\sup_{a,a^{\prime }\in \Gamma _{n}}\mathcal{WBP}_{T^{\alpha }}\left\vert
\left\langle f,h_{I}^{\sigma ,a}\right\rangle _{\sigma }\right\vert
\left\vert \left\langle g,h_{J}^{\omega ,a^{\prime }}\right\rangle _{\omega
}\right\vert .  \notag
\end{eqnarray}

\bigskip

The only place where the quasiweak boundedness property $\mathcal{WBP}%
_{T^{\alpha }}$ was used above was in the second line of the display (\ref%
{final display}) when we invoked%
\begin{equation*}
\left\vert \left\langle T_{\sigma }^{\alpha }\left( \mathbf{1}_{I_{\theta
}}\right) ,\mathbf{1}_{J_{\theta ^{\prime }}}\right\rangle _{\omega
}\right\vert \leq \mathcal{WBP}_{T^{\alpha }}\sqrt{\left\vert I_{\theta
}\right\vert _{\sigma }\left\vert J_{\theta ^{\prime }}\right\vert _{\omega }%
}
\end{equation*}%
for quasicubes $I_{\theta }\in \mathfrak{C}\left( I\right) $ and $J_{\theta
^{\prime }}\in \mathfrak{C}\left( J\right) $ that touch.

\bigskip

\textbf{2}: Here is the beginning of the proof on page 41 that controls the
form $\mathsf{T}_{\limfunc{far}\limfunc{below}}\left( f,g\right) $ in \cite%
{SaShUr6}

\bigskip

\textbf{Extract from page 41 of \cite{SaShUr6}:}

\textit{The far below term }$T_{\limfunc{far}\limfunc{below}}\left(
f,g\right) $\textit{\ is bounded using the Intertwining Proposition and the
control of functional energy condition by the energy condition given in the
next two sections. Indeed, assuming these two results, we have from }$\tau
<\rho $\textit{\ that}%
\begin{eqnarray*}
\mathsf{T}_{\limfunc{far}\limfunc{below}}\left( f,g\right) &=&\sum 
_{\substack{ A,B\in \mathcal{A}  \\ B\subsetneqq A}}\sum_{\substack{ I\in 
\mathcal{C}_{A}\text{ and }J\in \mathcal{C}_{B}^{\mathbf{\tau }-\limfunc{%
shift}}  \\ J\Subset _{\mathbf{\rho },\varepsilon }I}}\left\langle T_{\sigma
}^{\alpha }\left( \bigtriangleup _{I}^{\sigma }f\right) ,\left(
\bigtriangleup _{J}^{\omega }g\right) \right\rangle _{\omega } \\
&=&\sum_{B\in \mathcal{A}}\sum_{A\in \mathcal{A}:\ B\subsetneqq A}\sum 
_{\substack{ I\in \mathcal{C}_{A}\text{ and }J\in \mathcal{C}_{B}^{\mathbf{%
\tau }-\limfunc{shift}}  \\ J\Subset _{\mathbf{\rho },\varepsilon }I}}%
\left\langle T_{\sigma }^{\alpha }\left( \bigtriangleup _{I}^{\sigma
}f\right) ,\left( \bigtriangleup _{J}^{\omega }g\right) \right\rangle
_{\omega } \\
&=&\sum_{B\in \mathcal{A}}\sum_{A\in \mathcal{A}:\ B\subsetneqq A}\sum_{I\in 
\mathcal{C}_{A}\text{ and }J\in \mathcal{C}_{B}^{\mathbf{\tau }-\limfunc{%
shift}}}\left\langle T_{\sigma }^{\alpha }\left( \bigtriangleup _{I}^{\sigma
}f\right) ,\left( \bigtriangleup _{J}^{\omega }g\right) \right\rangle
_{\omega } \\
&&-\sum_{B\in \mathcal{A}}\sum_{A\in \mathcal{A}:\ B\subsetneqq A}\sum 
_{\substack{ I\in \mathcal{C}_{A}\text{ and }J\in \mathcal{C}_{B}^{\mathbf{%
\tau }-\limfunc{shift}}  \\ J\not\Subset _{\mathbf{\rho },\varepsilon }I}}%
\left\langle T_{\sigma }^{\alpha }\left( \bigtriangleup _{I}^{\sigma
}f\right) ,\left( \bigtriangleup _{J}^{\omega }g\right) \right\rangle
_{\omega } \\
&=&\mathsf{T}_{\limfunc{far}\limfunc{below}}^{1}\left( f,g\right) -\mathsf{T}%
_{\limfunc{far}\limfunc{below}}^{2}\left( f,g\right) .
\end{eqnarray*}%
\textit{Now }$\mathsf{T}_{\limfunc{far}\text{\ }\limfunc{below}}^{2}\left(
f,g\right) $\textit{\ is bounded by }$NTV_{\alpha }$\textit{\ by Lemma 9
since }$J$\textit{\ is good if }$\bigtriangleup _{J}^{\omega }g\neq 0$%
\textit{.}

\bigskip

The only place where the quasiweak boundedness property $\mathcal{WBP}%
_{T^{\alpha }}$ was used above\footnote{%
On page 41 of \cite{SaShUr6}, there was a typo in that $J\Subset _{\mathbf{%
\rho },\varepsilon }I$ appeared in the fourth line of the display instead of 
$J\not\Subset _{\mathbf{\rho },\varepsilon }I$ as corrected here.} was in
bounding the inner products $\left\langle T_{\sigma }^{\alpha }\left(
\bigtriangleup _{I}^{\sigma }f\right) ,\left( \bigtriangleup _{J}^{\omega
}g\right) \right\rangle _{\omega }$ by Lemma 9 of \cite{SaShUr6} when in
addition $I$ and $J$ were close in both scale and position, and this reduces
to the previous extract from pages 28 and 29 of \cite{SaShUr6} treated above.

Thus we may split the sum in (\ref{bilin}) as follows:%
\begin{eqnarray*}
\mathcal{T}^{\alpha }\left( f,g\right) &=&\sum_{I\in \Omega \mathcal{D}_{%
\limfunc{good}}^{\sigma }\text{ and }J\in \Omega \mathcal{D}_{\limfunc{good}%
}^{\omega }}\mathcal{T}^{\alpha }\left( \bigtriangleup _{I}^{\sigma
}f,\bigtriangleup _{J}^{\omega }g\right) \\
&=&\left\{ \sum_{\substack{ \left( I,J\right) \in \Omega \mathcal{D}_{%
\limfunc{good}}^{\sigma }\times \Omega \mathcal{D}_{\limfunc{good}}^{\omega
}:\ J\subset 3I  \\ 2^{-\mathbf{\rho }}\ell \left( I\right) \leq \ell \left(
J\right) \leq 2^{\mathbf{\rho }}\ell \left( I\right) }}+\sum_{\substack{ %
\left( I,J\right) \in \Omega \mathcal{D}_{\limfunc{good}}^{\sigma }\times
\Omega \mathcal{D}_{\limfunc{good}}^{\omega }:\ J\subset 3I  \\ 2^{-\mathbf{%
\rho }}\ell \left( I\right) \leq \ell \left( J\right) \leq 2^{\mathbf{\rho }%
}\ell \left( I\right) }}\right\} \mathcal{T}^{\alpha }\left( \bigtriangleup
_{I}^{\sigma }f,\bigtriangleup _{J}^{\omega }g\right) +\mathcal{R}^{\alpha
}\left( f,g\right) \\
&\equiv &\left\{ A_{1}\left( f,g\right) +A_{2}\left( f,g\right) \right\} +%
\mathcal{R}^{\alpha }\left( f,g\right) ,
\end{eqnarray*}%
where we are including in the terms $A_{1}\left( f,g\right) +A_{2}\left(
f,g\right) $ the corresponding inner products from the form $\mathsf{T}_{%
\limfunc{far}\limfunc{below}}^{2}\left( f,g\right) $ to which Lemma 9 of 
\cite{SaShUr6} was applied. Then the remainder form $\mathcal{R}^{\alpha
}\left( f,g\right) $ satisfies the estimate%
\begin{equation}
\left\vert \mathcal{R}^{\alpha }\left( f,g\right) \right\vert \leq C_{\alpha
}\left( \sqrt{\mathfrak{A}_{2}^{\alpha }}+\mathfrak{T}_{T^{\alpha }}+%
\mathfrak{T}_{T^{\alpha }}^{\ast }+\mathcal{E}_{\alpha }^{\limfunc{strong}}+%
\mathcal{E}_{\alpha }^{\limfunc{strong},\ast }\right) \left\Vert
f\right\Vert _{L^{2}\left( \sigma \right) }\left\Vert g\right\Vert
_{L^{2}\left( \omega \right) }.  \label{remainder est}
\end{equation}%
The key point here is that the quasiweak boundedness constant $\mathcal{WBP}%
_{T^{\alpha }}$ does\textbf{\ not} appear on the right hand side of this
estimate, and this is because the arguments in \cite{SaShUr6} that are used
to bound $\mathcal{R}^{\alpha }\left( f,g\right) $ do not use the quasiweak
boundedness property at all, as a patient reader can verify. This
constitutes the deepest part of our argument to prove (\ref{I/T}).

We now turn to the `good-$\lambda $' argument that will substitute for the
use of the quasiweak boundedness property in (\ref{bilin}) in order to prove
(\ref{I/T}). First we observe that the constant $C$ in (\ref{Ind/touch}) can
be taken to be $2^{\mathbf{\rho }}$, and then an application of the
inequality 
\begin{equation*}
\left\vert \left\langle T_{\sigma }^{\alpha }\left( \mathbf{1}_{I_{\theta
}}\right) ,\mathbf{1}_{J_{\theta ^{\prime }}}\right\rangle _{\omega
}\right\vert \leq \mathfrak{I}_{T^{\alpha }}\sqrt{\left\vert I_{\theta
}\right\vert _{\sigma }\left\vert J_{\theta ^{\prime }}\right\vert _{\omega }%
},
\end{equation*}%
to the display in (\ref{final display}) above, shows that%
\begin{eqnarray*}
\left\vert \left\langle T_{\sigma }^{\alpha }\left( \mathbf{1}_{I_{\theta
}}\bigtriangleup _{I}^{\sigma }f\right) ,\mathbf{1}_{J_{\theta ^{\prime
}}}\bigtriangleup _{J}^{\omega }g\right\rangle _{\omega }\right\vert
&\lesssim &\sup_{a,a^{\prime }\in \Gamma _{n}}\frac{\left\vert \left\langle
f,h_{I}^{\sigma ,a}\right\rangle _{\sigma }\right\vert }{\sqrt{\left\vert
I_{\theta }\right\vert _{\sigma }}}\left\vert \left\langle T_{\sigma
}^{\alpha }\left( \mathbf{1}_{I_{\theta }}\right) ,\mathbf{1}_{J_{\theta
^{\prime }}}\right\rangle _{\omega }\right\vert \frac{\left\vert
\left\langle g,h_{J}^{\omega ,a^{\prime }}\right\rangle _{\omega
}\right\vert }{\sqrt{\left\vert J_{\theta ^{\prime }}\right\vert _{\omega }}}
\\
&\lesssim &\sup_{a,a^{\prime }\in \Gamma _{n}}\frac{\left\vert \left\langle
f,h_{I}^{\sigma ,a}\right\rangle _{\sigma }\right\vert }{\sqrt{\left\vert
I_{\theta }\right\vert _{\sigma }}}\mathfrak{I}_{T^{\alpha }}\sqrt{%
\left\vert I_{\theta }\right\vert _{\sigma }\left\vert J_{\theta ^{\prime
}}\right\vert _{\omega }}\frac{\left\vert \left\langle g,h_{J}^{\omega
,a^{\prime }}\right\rangle _{\omega }\right\vert }{\sqrt{\left\vert
J_{\theta ^{\prime }}\right\vert _{\omega }}} \\
&=&\sup_{a,a^{\prime }\in \Gamma _{n}}\mathfrak{I}_{T^{\alpha }}\left\vert
\left\langle f,h_{I}^{\sigma ,a}\right\rangle _{\sigma }\right\vert
\left\vert \left\langle g,h_{J}^{\omega ,a^{\prime }}\right\rangle _{\omega
}\right\vert .
\end{eqnarray*}%
From this we obtain the following \emph{crude} estimate valid for any $f\in
L^{2}\left( \sigma \right) $ and $g\in L^{2}\left( \omega \right) $:%
\begin{equation}
\left\vert A_{1}\left( f,g\right) +A_{2}\left( f,g\right) \right\vert \leq
C_{\alpha }\left( \sqrt{\mathfrak{A}_{2}^{\alpha }}+\mathfrak{T}_{T^{\alpha
}}+\mathfrak{T}_{T^{\alpha }}^{\ast }+\mathfrak{I}_{T^{\alpha }}\right)
\left\Vert f\right\Vert _{L^{2}\left( \sigma \right) }\left\Vert
g\right\Vert _{L^{2}\left( \omega \right) }.  \label{bilin'}
\end{equation}

\begin{definition}
We say that two quasicubes $K$ and $L$ have $\eta $\emph{-comparable} side
lengths, or simply that $\ell \left( K\right) $ and $\ell \left( L\right) $
are $\eta $\emph{-comparable}, if%
\begin{equation*}
2^{-\mathbf{\eta }}\ell \left( K\right) \leq \ell \left( L\right) \leq 2^{%
\mathbf{\eta }}\ell \left( K\right) .
\end{equation*}%
Furthermore, we say that $K$ and $L$ are $\eta $\emph{-close }if they have $%
\eta $\emph{-comparable} side lengths, and if they belong to a common
quasigrid $\Omega \mathcal{D}$ and are touching quasicubes that satisfy
either $K\subset 3L$ or $L\subset 3K$.
\end{definition}

Now consider the special indicator case $f=\mathbf{1}_{Q}$ and $g=\mathbf{1}%
_{R}$ where $Q$ and $R$ are $\rho $\emph{-close} in some $\Omega \mathcal{D}$%
. For this case we will be able to do much better than (\ref{bilin'}). In
fact, for each $0<\lambda <\frac{1}{2}$ we claim that the following `good-$%
\lambda $' inequality holds:%
\begin{equation}
\left\vert A_{1}\left( \mathbf{1}_{Q},\mathbf{1}_{R}\right) \right\vert
+\left\vert A_{2}\left( \mathbf{1}_{Q},\mathbf{1}_{R}\right) \right\vert
\leq C_{\alpha }\left( \frac{1}{\lambda }\sqrt{\mathfrak{A}_{2}^{\alpha }}+%
\mathfrak{T}_{T^{\alpha }}+\mathfrak{T}_{T^{\alpha }}^{\ast }+\sqrt[4]{%
\lambda }\mathfrak{N}_{T^{\alpha }}\right) \left\Vert \mathbf{1}%
_{Q}\right\Vert _{L^{2}\left( \sigma \right) }\left\Vert \mathbf{1}%
_{R}\right\Vert _{L^{2}\left( \omega \right) }.  \label{claim}
\end{equation}%
With (\ref{claim}) proved, we can use it and (\ref{remainder est}) to
complete the proof of the estimate for the indicator/touching property (\ref%
{I/T}) by taking expectations $\mathbb{E}_{\Omega }$ as usual:%
\begin{eqnarray*}
&&\mathbb{E}_{\Omega }\left\vert \sum_{I\in \Omega \mathcal{D}^{\sigma }%
\text{ and }J\in \Omega \mathcal{D}^{\omega }}\mathcal{T}_{\sigma }^{\alpha
}\left( \bigtriangleup _{I}^{\sigma }\mathbf{1}_{Q},\bigtriangleup
_{J}^{\omega }\mathbf{1}_{R}\right) \right\vert \\
&\leq &\mathbb{E}_{\Omega }\left( \left\vert A_{1}\right\vert +\left\vert
A_{2}\right\vert \right) +\mathbb{E}_{\Omega }\left\vert \mathcal{R}^{\alpha
}\left( \mathbf{1}_{Q},\mathbf{1}_{R}\right) \right\vert \\
&\leq &C_{\alpha }\left( \frac{1}{\lambda }\sqrt{\mathfrak{A}_{2}^{\alpha }}+%
\mathfrak{T}_{T^{\alpha }}+\mathfrak{T}_{T^{\alpha }}^{\ast }+\sqrt{\lambda }%
\mathfrak{N}_{T^{\alpha }}\right) \left\Vert \mathbf{1}_{Q}\right\Vert
_{L^{2}\left( \sigma \right) }\left\Vert \mathbf{1}_{R}\right\Vert
_{L^{2}\left( \omega \right) } \\
&&+C_{\alpha }\left( \sqrt{\mathfrak{A}_{2}^{\alpha }}+\mathfrak{T}%
_{T^{\alpha }}+\mathfrak{T}_{T^{\alpha }}^{\ast }+\mathcal{E}_{\alpha }^{%
\limfunc{strong}}+\mathcal{E}_{\alpha }^{\limfunc{strong},\ast }\right)
\left\Vert \mathbf{1}_{Q}\right\Vert _{L^{2}\left( \sigma \right)
}\left\Vert \mathbf{1}_{R}\right\Vert _{L^{2}\left( \omega \right) } \\
&\leq &C_{\alpha }\left( \frac{1}{\lambda }\sqrt{\mathfrak{A}_{2}^{\alpha }}+%
\mathfrak{T}_{T^{\alpha }}+\mathfrak{T}_{T^{\alpha }}^{\ast }+\mathcal{E}%
_{\alpha }^{\limfunc{strong}}+\mathcal{E}_{\alpha }^{\limfunc{strong},\ast }+%
\sqrt[4]{\lambda }\mathfrak{N}_{T^{\alpha }}\right) \left\Vert \mathbf{1}%
_{Q}\right\Vert _{L^{2}\left( \sigma \right) }\left\Vert \mathbf{1}%
_{R}\right\Vert _{L^{2}\left( \omega \right) },
\end{eqnarray*}%
which gives (\ref{I/T}) upon taking the supremum over such $Q$ and $R$ to get%
\begin{equation*}
\mathfrak{I}_{T^{\alpha }}\leq C_{\alpha }^{\prime }\left( \frac{1}{\lambda }%
\sqrt{\mathfrak{A}_{2}^{\alpha }}+\mathfrak{T}_{T^{\alpha }}+\mathfrak{T}%
_{T^{\alpha }}^{\ast }+\mathcal{E}_{\alpha }^{\limfunc{strong}}+\mathcal{E}%
_{\alpha }^{\limfunc{strong},\ast }+\sqrt[4]{\lambda }\mathfrak{N}%
_{T^{\alpha }}\right) .
\end{equation*}

\begin{notation}
The remainder of this paper is devoted to proving (\ref{claim}) for touching
and $\mathbf{\rho }$-close quasicubes $Q$ and $R$. To simplify notation and
geometric constructions, we consider only the case of \emph{ordinary} cubes
in $\mathcal{P}^{n}$, and note that the extension to the quasiworld is then
routine.
\end{notation}

To prove the claim (\ref{claim}) we use the \emph{parameterization by
translation} introduced above. Essentially this approach was used in the
averaging technique employed in \cite{Saw1}, which in turn was borrowed from
Fefferman and Stein \cite{FeSt}, later refined in \cite{Hyt2}, and further
refined here in this paper. It suffices to prove that%
\begin{eqnarray*}
\left\vert \mathcal{T}^{\alpha }\left( \left( \mathbf{1}_{Q}\right) _{%
\limfunc{good}},\left( \mathbf{1}_{R}\right) _{\limfunc{good}}\right)
\right\vert  &\leq &C_{\alpha }\left( \frac{1}{\lambda }\sqrt{\mathfrak{A}%
_{2}^{\alpha }}+\mathfrak{T}_{T^{\alpha }}+\mathfrak{T}_{T^{\alpha }}^{\ast
}+\mathcal{E}_{\alpha }^{\limfunc{strong}}+\mathcal{E}_{\alpha }^{\limfunc{%
strong},\ast }+\sqrt{\lambda }\mathfrak{N}_{T^{\alpha }}\right)  \\
&&\ \ \ \ \ \ \ \ \ \ \ \ \ \ \ \ \ \ \ \ \ \ \ \ \ \times \left\Vert 
\mathbf{1}_{Q}\right\Vert _{L^{2}\left( \sigma \right) }\left\Vert \mathbf{1}%
_{R}\right\Vert _{L^{2}\left( \omega \right) }\ ,
\end{eqnarray*}%
for all $Q,R\in \mathcal{P}^{n}$ that are $\mathbf{\rho }$-close, uniformly
over $Q$-good grids, and where%
\begin{equation*}
\mathcal{T}^{\alpha }\left( \left( \mathbf{1}_{Q}\right) _{\limfunc{good}%
},\left( \mathbf{1}_{R}\right) _{\limfunc{good}}\right) =\sum_{I\in \mathcal{%
D}_{Q;\limfunc{good}}^{\sigma }\text{ and }J\in \mathcal{D}_{Q;\limfunc{good}%
}^{\omega }}\mathcal{T}^{\alpha }\left( \bigtriangleup _{I}^{\sigma }\mathbf{%
1}_{Q},\bigtriangleup _{J}^{\omega }\mathbf{1}_{R}\right) .
\end{equation*}%
The grids $\mathcal{D}_{Q;\limfunc{good}}^{\sigma }=\mathcal{D}_{Q;\limfunc{%
good}}^{\omega }$ are those arising in the projections $\mathsf{P}_{Q;%
\QTR{up}{\limfunc{good}}}^{\sigma }f$ and $\mathsf{P}_{Q;\QTR{up}{\limfunc{%
good}}}^{\omega }g$ above. Moreover, due to the key observation above
regarding where the weak boundedness property arises in the proof of the
main theorem in \cite{SaShUr6}, it suffices to prove%
\begin{eqnarray*}
&&\mathbb{E}_{\Omega }\left\{ \sum_{\substack{ \left( I,J\right) \in 
\mathcal{D}_{Q;\limfunc{good}}^{\sigma }\times \mathcal{D}_{Q;\limfunc{good}%
}^{\omega }:\ J\subset 3I \\ 2^{-\mathbf{\rho }}\ell \left( I\right) \leq
\ell \left( J\right) \leq 2^{\mathbf{\rho }}\ell \left( I\right) }}+\sum
_{\substack{ \left( I,J\right) \in \mathcal{D}_{Q;\limfunc{good}}^{\sigma
}\times \mathcal{D}_{Q;\limfunc{good}}^{\omega }:\ I\subset 3J \\ 2^{-%
\mathbf{\rho }}\ell \left( I\right) \leq \ell \left( J\right) \leq 2^{%
\mathbf{\rho }}\ell \left( I\right) }}\right\} \left\vert \left\langle
T_{\sigma }^{\alpha }\left( \bigtriangleup _{I}^{\sigma }\mathbf{1}%
_{Q}\right) ,\bigtriangleup _{J}^{\omega }\mathbf{1}_{R}\right\rangle
_{\omega }\right\vert  \\
&\leq &C_{\alpha }\left( \frac{1}{\lambda }\sqrt{\mathfrak{A}_{2}^{\alpha }}+%
\mathfrak{T}_{T^{\alpha }}+\mathfrak{T}_{T^{\alpha }}^{\ast }+\sqrt[4]{%
\lambda }\mathfrak{N}_{T^{\alpha }}\right) \left\Vert \mathbf{1}%
_{Q}\right\Vert _{L^{2}\left( \sigma \right) }\left\Vert \mathbf{1}%
_{R}\right\Vert _{L^{2}\left( \omega \right) }\ ,
\end{eqnarray*}%
under the assumption that we sum over only $Q$-good cubes $I$ and $J$ that
belong to $Q$-good grids in the above sums, and where we recall that we may
realize the underlying probability space as translations of any fixed grid,
say the standard dyadic grid. Note that $R$ is contained in $3Q$, and this
accounts for our inclusion of siblings in Definition \ref{crit} above.

By symmetry it suffices to prove for all $0<\lambda <\frac{1}{2}$ that%
\begin{eqnarray}
&&\mathbb{E}_{\Omega }\sum_{\substack{ \left( I,J\right) \in \mathcal{D}_{Q;%
\limfunc{good}}^{\sigma }\times \mathcal{D}_{Q;\limfunc{good}}^{\omega }:\
J\subset 3I \\ 2^{-\mathbf{\rho }}\ell \left( I\right) \leq \ell \left(
J\right) \leq 2^{\mathbf{\rho }}\ell \left( I\right)  \\ I\text{ and }J\text{
touch}}}\left\vert \left\langle T_{\sigma }^{\alpha }\left( \bigtriangleup
_{I}^{\sigma }\mathbf{1}_{Q}\right) ,\bigtriangleup _{J}^{\omega }\mathbf{1}%
_{R}\right\rangle _{\omega }\right\vert   \label{suff} \\
&\leq &C_{\alpha }\left( \frac{1}{\lambda }\sqrt{\mathfrak{A}_{2}^{\alpha }}+%
\mathfrak{T}_{T^{\alpha }}+\mathfrak{T}_{T^{\alpha }}^{\ast }+\sqrt[4]{%
\lambda }\mathfrak{I}_{T^{\alpha }}\right) \left\Vert \mathbf{1}%
_{Q}\right\Vert _{L^{2}\left( \sigma \right) }\left\Vert \mathbf{1}%
_{R}\right\Vert _{L^{2}\left( \omega \right) }\ ,  \notag
\end{eqnarray}%
for all cubes $Q,R\in \mathcal{P}^{n}$ that are $\mathbf{\rho }$-close (we
are including the testing conditions here because we are including children $%
I_{\theta }$ and $J_{\theta ^{\prime }}$ in the display (\ref{final display}%
) that coincide as well).

\subsubsection{Three critical reductions\label{three crit}}

Now we make three critical reductions that permit the application of NTV
surgery, and lie at the core of the much better estimate (\ref{claim}).

\begin{enumerate}
\item We must have that $I$ `cuts across the boundary' of $Q$, i.e. $%
\left\vert I\cap Q\right\vert >0$ and $\left\vert I\cap Q^{c}\right\vert >0$
(or else $\bigtriangleup _{I}^{\sigma }\mathbf{1}_{Q}=0$),

\item We must have that $J$ `cuts across the boundary' of $R$, i.e. $%
\left\vert J\cap R\right\vert >0$ and $\left\vert J\cap R^{c}\right\vert >0$
(or else $\bigtriangleup _{J}^{\omega }\mathbf{1}_{R}=0$),

\item By the assumed `$Q$-goodness' in Definition \ref{crit}, together with
reductions (1) and (2) above, we \emph{cannot} have either $\ell \left(
I\right) \geq 2^{\mathbf{r}}\ell \left( Q\right) $ or $\ell \left( J\right)
\geq 2^{\mathbf{r}}\ell \left( R\right) $.
\end{enumerate}

From these reductions, we are left to prove%
\begin{eqnarray}
&&\mathbb{E}_{\Omega }\sum_{\substack{ \left( I,J\right) \in \mathcal{D}_{Q;%
\limfunc{good}}^{\sigma }\times \mathcal{D}_{Q;\limfunc{good}}^{\omega }:\
J\subset 3I \\ I\text{ and }J\text{ are }\mathbf{\rho }\text{-close} \\ \ell
\left( I\right) <2^{\mathbf{r}}\ell \left( Q\right) \text{ and }\ell \left(
J\right) <2^{\mathbf{r}}\ell \left( R\right) }}\left\vert \left\langle
T_{\sigma }^{\alpha }\left( \bigtriangleup _{I}^{\sigma }\mathbf{1}%
_{Q}\right) ,\bigtriangleup _{J}^{\omega }\mathbf{1}_{R}\right\rangle
_{\omega }\right\vert   \label{restricted} \\
&\leq &C_{\alpha }\left( \frac{1}{\lambda }\sqrt{\mathfrak{A}_{2}^{\alpha }}+%
\mathfrak{T}_{T^{\alpha }}+\mathfrak{T}_{T^{\alpha }}^{\ast }+\sqrt[4]{%
\lambda }\mathfrak{N}_{T^{\alpha }}\right) \left\Vert \mathbf{1}%
_{Q}\right\Vert _{L^{of2}\left( \sigma \right) }\left\Vert \mathbf{1}%
_{R}\right\Vert _{L^{2}\left( \omega \right) }\ ,  \notag
\end{eqnarray}%
for all $\mathbf{\rho }$-close $Q,R\in \mathcal{P}^{n}$.

The \emph{small} pairs of cubes $\left( I,J\right) $, i.e. those with both $%
\ell \left( I\right) <2^{-\mathbf{r}}\ell \left( Q\right) $ and $\ell \left(
J\right) <2^{\mathbf{r}}\ell \left( R\right) $, pose a difficulty and our
next task is to further reduce matters to proving the more restricted
estimate:%
\begin{eqnarray}
&&\mathbb{E}_{\Omega }\sum_{\substack{ \left( I,J\right) \in \mathcal{D}_{Q;%
\limfunc{good}}^{\sigma }\times \mathcal{D}_{Q;\limfunc{good}}^{\omega }:\
J\subset 3I \\ I\text{ and }J\text{ are }\mathbf{\rho }\text{-close} \\ \ell
\left( I\right) \text{ and }\ell \left( Q\right) \text{ are }r\text{%
-comparable} \\ \ell \left( J\right) \text{ and }\ell \left( R\right) \text{
are }r\text{-comparable}}}\left\vert \left\langle T_{\sigma }^{\alpha
}\left( \bigtriangleup _{I}^{\sigma }\mathbf{1}_{Q}\right) ,\bigtriangleup
_{J}^{\omega }\mathbf{1}_{R}\right\rangle _{\omega }\right\vert 
\label{more restricted} \\
&\leq &C_{\alpha }\left( \frac{1}{\lambda }\sqrt{\mathfrak{A}_{2}^{\alpha }}+%
\mathfrak{T}_{T^{\alpha }}+\mathfrak{T}_{T^{\alpha }}^{\ast }+\sqrt[4]{%
\lambda }\mathfrak{N}_{T^{\alpha }}\right) \left\Vert \mathbf{1}%
_{Q}\right\Vert _{L^{2}\left( \sigma \right) }\left\Vert \mathbf{1}%
_{R}\right\Vert _{L^{2}\left( \omega \right) }\ ,  \notag
\end{eqnarray}%
for all $Q,R\in \mathcal{P}^{n}$ that are $\mathbf{\rho }$-close. The
difference between (\ref{more restricted}) and (\ref{restricted}) is that in
(\ref{more restricted}), we do \emph{not} permit small pairs of $\left(
I,J\right) $, i.e. those with $\ell \left( I\right) <2^{-\mathbf{r}}\ell
\left( Q\right) $ or $\ell \left( J\right) <2^{-\mathbf{r}}\ell \left(
RQ\right) $.

\subsubsection{Elimination of small pairs}

To eliminate the small pairs from (\ref{restricted}), we apply for a second
time our proof from \cite{SaShUr6} as outlined above, but this time to each
inner product $\left\langle T_{\sigma }^{\alpha }\left( \bigtriangleup
_{I}^{\sigma }\mathbf{1}_{Q}\right) ,\bigtriangleup _{J}^{\omega }\mathbf{1}%
_{R}\right\rangle _{\omega }$ appearing in the sum in (\ref{restricted})
inside the expectation $\mathbb{E}_{\Omega }$. In other words, for fixed $I$%
, $J$, $Q$ and $R$, we take $f=\bigtriangleup _{I}^{\sigma }\mathbf{1}_{Q}$
and $g=\bigtriangleup _{J}^{\omega }\mathbf{1}_{R}$, and we obtain that%
\begin{eqnarray*}
&&\mathbb{E}_{\Omega }\mathbb{E}_{\Omega ^{\prime }}\left\vert \left\langle
T_{\sigma }^{\alpha }\left( \bigtriangleup _{I}^{\sigma }\mathbf{1}%
_{Q}\right) ,\bigtriangleup _{J}^{\omega }\mathbf{1}_{R}\right\rangle
_{\omega }\right\vert  \\
&\leq &C_{\alpha }\left( \sqrt{\mathfrak{A}_{2}^{\alpha }}+\mathfrak{T}%
_{T^{\alpha }}+\mathfrak{T}_{T^{\alpha }}^{\ast }++\mathcal{E}_{\alpha }^{%
\limfunc{strong}}+\mathcal{E}_{\alpha }^{\limfunc{strong},\ast
}+2^{-\varepsilon \mathbf{r}}\mathfrak{N}_{T^{\alpha }}\right) \left\Vert
\bigtriangleup _{I}^{\sigma }\mathbf{1}_{Q}\right\Vert _{L^{2}\left( \sigma
\right) }\left\Vert \bigtriangleup _{J}^{\sigma }\mathbf{1}_{R}\right\Vert
_{L^{2}\left( \omega \right) } \\
&&+\mathbb{E}_{\Omega }\mathbb{E}_{\Omega ^{\prime }}\sum_{\substack{ \left(
K,L\right) \in \mathcal{D}_{Q;\limfunc{good}}^{\prime }\times \mathcal{D}_{Q;%
\limfunc{good}}^{\prime }:\ L\subset 3K \\ K\text{ and }L\text{ are }\mathbf{%
\rho }\text{-close} \\ \ell \left( K\right) <2^{\mathbf{r}}\ell \left(
I\right) \text{ and }\ell \left( L\right) <2^{\mathbf{r}}\ell \left(
JR\right) }}\left\vert \left\langle T_{\sigma }^{\alpha }\bigtriangleup
_{K}^{\sigma }\left( \bigtriangleup _{I}^{\sigma }\mathbf{1}_{Q}\right)
,\bigtriangleup _{L}^{\omega }\left( \bigtriangleup _{J}^{\omega }\mathbf{1}%
_{R}\right) \right\rangle _{\omega }\right\vert \ ,
\end{eqnarray*}%
where here the expectation $\mathbb{E}_{\Omega ^{\prime }}$ is taken to be
independent of $\mathbb{E}_{\Omega }$.

But now we may further assume that the pair of grids $\left( \mathcal{D},%
\mathcal{D}^{\prime }\right) $, for which $\left( I,J\right) \in \mathcal{D}%
\times \mathcal{D}$ and $\left( K,L\right) \in \mathcal{D}^{\prime }\times 
\mathcal{D}^{\prime }$, are \emph{mutually good}\footnote{%
Both $I$ and $J$ belong to the common grid $\mathcal{D}$, while $K$ and $L$
belong to the independent common grid $\mathcal{D}^{\prime }$ - in contrast
to the traditional use of two independent grids where $I\in \mathcal{D}$ and 
$J\in \mathcal{D}^{\prime }$.}. Thus we cannot have $\ell \left( K\right)
<2^{-\mathbf{\rho }}\ell \left( I\right) $ because $K$ is $I$-good, and this
elimates the inclusion of small pairs $\left( K,L\right) $, i.e. those with $%
\ell \left( K\right) <2^{-\mathbf{\rho }}\ell \left( I\right) $. Note that
the term $2^{-\varepsilon \mathbf{r}}\mathfrak{N}_{T^{\alpha }}$ arises from
the bad Haar projections $\bigtriangleup _{K}^{\sigma }$ and $\bigtriangleup
_{L}^{\omega }$ of $\bigtriangleup _{I}^{\sigma }\mathbf{1}_{Q}$ and $%
\bigtriangleup _{J}^{\omega }\mathbf{1}_{R}$ respecitively. Finally, we note
that $f=\bigtriangleup _{I}^{\sigma }\mathbf{1}_{Q}$ is constant on the
children of $I$ and that $\left\Vert \bigtriangleup _{I}^{\sigma }\mathbf{1}%
_{Q}\right\Vert _{L^{2}\left( \sigma \right) }^{2}=\sum_{I^{\prime }\in 
\mathfrak{C}\left( I\right) }\int_{I^{\prime }}\left\vert \mathbb{E}%
_{I^{\prime }}^{\sigma }\mathbf{1}_{Q}-\mathbb{E}_{I}^{\sigma }\mathbf{1}%
_{Q}\right\vert ^{2}d\sigma $. Thus it suffices to prove the following
estimate,%
\begin{eqnarray*}
&&\mathbb{E}_{\Omega ^{\prime }}\sum_{\substack{ \left( K,L\right) \in 
\mathcal{D}_{Q;\limfunc{good}}^{\prime }\times \mathcal{D}_{Q;\limfunc{good}%
}^{\prime }:\ L\subset 3K \\ K\text{ and }L\text{ are }\mathbf{\rho }\text{%
-close} \\ \ell \left( K\right) \text{ and }\ell \left( I\right) \text{ are }%
\mathbf{r}\text{-comparable} \\ \ell \left( L\right) \text{ and }\ell \left(
J\right) \text{ are }\mathbf{r}\text{-comparable}}}\sum_{\substack{ %
I^{\prime }\in \mathfrak{C}\left( I\right)  \\ J^{\prime }\in \mathfrak{C}%
\left( J\right) }}\left\vert \left\langle T_{\sigma }^{\alpha
}\bigtriangleup _{K}^{\sigma }\left( \left[ \mathbb{E}_{I^{\prime }}^{\sigma
}\bigtriangleup _{I}^{\sigma }\mathbf{1}_{Q}\right] \mathbf{1}_{I^{\prime
}}\right) ,\bigtriangleup _{L}^{\omega }\left( \left[ \mathbb{E}_{J^{\prime
}}^{\omega }\bigtriangleup _{J}^{\omega }\mathbf{1}_{R}\right] \mathbf{1}%
_{J^{\prime }}\right) \right\rangle _{\omega }\right\vert  \\
&\leq &C_{\alpha }\left( \frac{1}{\lambda }\sqrt{\mathfrak{A}_{2}^{\alpha }}+%
\mathfrak{T}_{T^{\alpha }}+\mathfrak{T}_{T^{\alpha }}^{\ast }+\sqrt[4]{%
\lambda }\mathfrak{N}_{T^{\alpha }}\right) \sum_{\substack{ I^{\prime }\in 
\mathfrak{C}\left( I\right)  \\ J^{\prime }\in \mathfrak{C}\left( J\right) }}%
\left\vert \mathbb{E}_{I^{\prime }}^{\sigma }\bigtriangleup _{I}^{\sigma }%
\mathbf{1}_{Q}\right\vert \ \left\vert \mathbb{E}_{J^{\prime }}^{\omega
}\bigtriangleup _{J}^{\omega }\mathbf{1}_{R}\right\vert \ \left\Vert \mathbf{%
1}_{I^{\prime }}\right\Vert _{L^{2}\left( \sigma \right) }\left\Vert \mathbf{%
1}_{J^{\prime }}\right\Vert _{L^{2}\left( \omega \right) },
\end{eqnarray*}%
which we can write simply as%
\begin{eqnarray*}
&&\mathbb{E}_{\Omega ^{\prime }}\sum_{\substack{ \left( K,L\right) \in 
\mathcal{D}_{Q;\limfunc{good}}^{\prime }\times \mathcal{D}_{Q;\limfunc{good}%
}^{\prime }:\ L\subset 3K \\ K\text{ and }L\text{ are }\mathbf{\rho }\text{%
-close} \\ \ell \left( K\right) \text{ and }\ell \left( I^{\prime }\right) 
\text{ are }\mathbf{r}\text{-comparable} \\ \ell \left( L\right) \text{ and }%
\ell \left( J^{\prime }\right) \text{ are }\mathbf{r}\text{-comparable}}}%
\left\vert \left\langle T_{\sigma }^{\alpha }\bigtriangleup _{K}^{\sigma
}\left( \mathbf{1}_{I^{\prime }}\right) ,\bigtriangleup _{L}^{\omega }\left( 
\mathbf{1}_{J^{\prime }}\right) \right\rangle _{\omega }\right\vert  \\
&\leq &C_{\alpha }\left( \frac{1}{\lambda }\sqrt{\mathfrak{A}_{2}^{\alpha }}+%
\mathfrak{T}_{T^{\alpha }}+\mathfrak{T}_{T^{\alpha }}^{\ast }+\sqrt[4]{%
\lambda }\mathfrak{N}_{T^{\alpha }}\right) \ \left\Vert \mathbf{1}%
_{I^{\prime }}\right\Vert _{L^{2}\left( \sigma \right) }\left\Vert \mathbf{1}%
_{J^{\prime }}\right\Vert _{L^{2}\left( \omega \right) }
\end{eqnarray*}%
for each $I^{\prime }\in \mathfrak{C}\left( I\right) $ and $J^{\prime }\in 
\mathfrak{C}\left( J\right) $. Now relabel $I^{\prime }$ and $J^{\prime }$
as $Q$ and $R$ respectively (and then also $K$ and $L$ as $I$ and $J$
respectively) to obtain (\ref{more restricted}).

\subsubsection{NTV surgery}

Now in order to prove (\ref{more restricted}), we invoke the technique of
NTV surgery as used in \cite{NTV}, \cite{HyMa} and \cite{LaWi}. Given $%
0<\lambda <\frac{1}{2}$, define%
\begin{equation*}
J_{\lambda }\equiv \left\{ x\in J:\limfunc{dist}\left( x,\partial J\right)
>\lambda \ell \left( J\right) \right\} .
\end{equation*}%
Then we write%
\begin{eqnarray*}
\left\vert \left\langle T_{\sigma }^{\alpha }\left( \bigtriangleup
_{I}^{\sigma }\mathbf{1}_{Q}\right) ,\bigtriangleup _{J}^{\omega }\mathbf{1}%
_{R}\right\rangle _{\omega }\right\vert &\leq &\left\vert \left\langle
T_{\sigma }^{\alpha }\left( \bigtriangleup _{I}^{\sigma }\mathbf{1}%
_{Q}\right) ,\mathbf{1}_{J_{\lambda }}\bigtriangleup _{J}^{\omega }\mathbf{1}%
_{R}\right\rangle _{\omega }\right\vert +\left\vert \left\langle T_{\sigma
}^{\alpha }\left( \bigtriangleup _{I}^{\sigma }\mathbf{1}_{Q}\right) ,%
\mathbf{1}_{J\setminus J_{\lambda }}\bigtriangleup _{J}^{\omega }\mathbf{1}%
_{R}\right\rangle _{\omega }\right\vert \\
&\equiv &A_{1}+A_{2}.
\end{eqnarray*}%
Now we use first the fact that$\ I$ and $J_{\lambda }$ are separated by a
distance at least $\lambda \ell \left( J\right) >0$ in order to bound the
first term $A_{1}$ by 
\begin{eqnarray}
A_{1} &=&\left\vert \left\langle T_{\sigma }^{\alpha }\left( \mathbf{1}%
_{I^{\prime }}\bigtriangleup _{I}^{\sigma }\mathbf{1}_{Q}\right) ,\mathbf{1}%
_{J_{\lambda }}\bigtriangleup _{J}^{\omega }\mathbf{1}_{R}\right\rangle
_{\omega }\right\vert  \label{sep} \\
&\lesssim &\frac{1}{\lambda }\sqrt{\mathfrak{A}_{2}^{\alpha }}\left\Vert
\bigtriangleup _{I}^{\sigma }\mathbf{1}_{Q}\right\Vert _{L^{2}\left( \sigma
\right) }\left\Vert \bigtriangleup _{J}^{\omega }\mathbf{1}_{R}\right\Vert
_{L^{2}\left( \omega \right) }\leq \frac{1}{\lambda }\sqrt{\mathfrak{A}%
_{2}^{\alpha }}\left\Vert \mathbf{1}_{Q}\right\Vert _{L^{2}\left( \sigma
\right) }\left\Vert \mathbf{1}_{R}\right\Vert _{L^{2}\left( \omega \right)
}\ .  \notag
\end{eqnarray}%
We further dominate the square of the second term $A_{2}$ by%
\begin{eqnarray}
A_{2}^{2} &=&\left\vert \left\langle T_{\sigma }^{\alpha }\left(
\bigtriangleup _{I}^{\sigma }\mathbf{1}_{Q}\right) ,\mathbf{1}_{J\setminus
J_{\lambda }}\bigtriangleup _{J}^{\omega }\mathbf{1}_{R}\right\rangle
_{\omega }\right\vert ^{2}  \label{B2} \\
&=&\left\vert \left\langle T_{\sigma }^{\alpha }\left( \sum_{I^{\prime }\in 
\mathfrak{C}\left( I\right) }\mathbf{1}_{I^{\prime }}\bigtriangleup
_{I}^{\sigma }\mathbf{1}_{Q}\right) ,\mathbf{1}_{J\setminus J_{\delta
}}\sum_{J^{\prime }\in \mathfrak{C}\left( J\right) }\mathbf{1}_{J^{\prime
}}\bigtriangleup _{J}^{\omega }\mathbf{1}_{R}\right\rangle _{\omega
}\right\vert ^{2}  \notag \\
&\lesssim &\sum_{I^{\prime }\in \mathfrak{C}\left( I\right) }\sum_{J^{\prime
}\in \mathfrak{C}\left( J\right) }\left\vert \left\langle T_{\sigma
}^{\alpha }\left( \mathbf{1}_{I^{\prime }}\bigtriangleup _{I}^{\sigma }%
\mathbf{1}_{Q}\right) ,\mathbf{1}_{J^{\prime }\setminus J_{\lambda
}}\bigtriangleup _{J}^{\omega }\mathbf{1}_{R}\right\rangle _{\omega
}\right\vert ^{2}  \notag \\
&\lesssim &\sum_{I^{\prime }\in \mathfrak{C}\left( I\right) }\sum_{J^{\prime
}\in \mathfrak{C}\left( J\right) }\mathfrak{N}_{T^{\alpha }}^{2}\left\Vert 
\mathbf{1}_{I^{\prime }}\bigtriangleup _{I}^{\sigma }\mathbf{1}%
_{Q}\right\Vert _{L^{2}\left( \sigma \right) }^{2}\left\Vert \mathbf{1}%
_{J^{\prime }\setminus J_{\lambda }}\bigtriangleup _{J}^{\omega }\mathbf{1}%
_{R}\right\Vert _{L^{2}\left( \omega \right) }^{2}  \notag \\
&\lesssim &\mathfrak{N}_{T^{\alpha }}^{2}\left\Vert \mathbf{1}%
_{Q}\right\Vert _{L^{2}\left( \sigma \right) }^{2}\sum_{J^{\prime }\in 
\mathfrak{C}\left( J\right) }\left\Vert \mathbf{1}_{J^{\prime }\setminus
J_{\lambda }}\bigtriangleup _{J}^{\omega }\mathbf{1}_{R}\right\Vert
_{L^{2}\left( \omega \right) }^{2}=\mathfrak{N}_{T^{\alpha }}^{2}\left\Vert 
\mathbf{1}_{Q}\right\Vert _{L^{2}\left( \sigma \right) }^{2}\int_{J^{\prime
}\setminus J_{\lambda }}\left\vert \bigtriangleup _{J}^{\omega }\mathbf{1}%
_{R}\right\vert ^{2}d\omega \ .  \notag
\end{eqnarray}

Then we note the fact that, using the \emph{translation parameterization} of 
$\Omega $ indexed by $\gamma \in \Gamma $, we have 
\begin{equation}
\mathbb{E}_{\Omega }\left\vert R\cap \left[ \left( J+\gamma \right) ^{\prime
}\setminus \left( J+\gamma \right) _{\lambda }\right] \right\vert _{\omega
}\leq C_{\alpha }\lambda \left\vert R\right\vert _{\omega },  \label{hand}
\end{equation}%
which follows upon taking the average over certain translates $\mathcal{D}%
_{0}+\gamma $ where $\mathcal{D}_{0}$ is a fixed grid containing $J$. This
is of course equivalent to taking instead the average over\ the same
translates $\omega +\gamma $ of the measure $\omega $, and it is in this
latter form that (\ref{hand}) is evident.

Now we will apply (\ref{hand}), together with an argument to resolve the
difficulty associated with the appearance of $J$ in \emph{both} $J^{\prime
}\setminus J_{\lambda }$ and $\bigtriangleup _{J}^{\omega }\mathbf{1}_{R}$,
to obtain the following key estimate for every $0<\lambda <\frac{1}{2}$:%
\begin{equation}
\mathbb{E}_{\Omega }\int_{J^{\prime }\setminus J_{\lambda }}\left\vert
\bigtriangleup _{J}^{\omega }\mathbf{1}_{R}\right\vert ^{2}d\omega \leq
C_{\alpha }\sqrt{\lambda }\left\vert R\right\vert _{\omega },
\label{follow est}
\end{equation}%
for the expected value of the final integral on the right hand side of (\ref%
{B2}). With (\ref{follow est}) and (\ref{sep}) in hand, we will obtain that 
\begin{eqnarray*}
&&\mathbb{E}_{\Omega }\left\vert \left\langle T_{\sigma }^{\alpha }\left(
\bigtriangleup _{I}^{\sigma }\mathbf{1}_{Q}\right) ,\bigtriangleup
_{J}^{\omega }\mathbf{1}_{R}\right\rangle _{\omega }\right\vert ^{2} \\
&\lesssim &\mathbb{E}_{\Omega }\left\vert \left\langle T_{\sigma }^{\alpha
}\left( \bigtriangleup _{I}^{\sigma }\mathbf{1}_{Q}\right) ,\mathbf{1}%
_{J_{\lambda }}\bigtriangleup _{J}^{\omega }\mathbf{1}_{R}\right\rangle
_{\omega }\right\vert ^{2}+\mathbb{E}_{\Omega }\sum_{I^{\prime }\in 
\mathfrak{C}\left( I\right) }\sum_{J^{\prime }\in \mathfrak{C}\left(
J\right) }\left\vert \left\langle T_{\sigma }^{\alpha }\left( \mathbf{1}%
_{I^{\prime }}\bigtriangleup _{I}^{\sigma }\mathbf{1}_{Q}\right) ,\mathbf{1}%
_{J^{\prime }\setminus J_{\lambda }}\bigtriangleup _{J}^{\omega }\mathbf{1}%
_{R}\right\rangle _{\omega }\right\vert ^{2} \\
&\leq &C_{\alpha }^{2}\frac{1}{\lambda ^{2}}\mathfrak{A}_{2}^{\alpha
}\left\Vert \mathbf{1}_{Q}\right\Vert _{L^{2}\left( \sigma \right)
}^{2}\left\Vert \mathbf{1}_{R}\right\Vert _{L^{2}\left( \omega \right) }^{2}+%
\mathbb{E}_{\Omega }\sum_{I^{\prime }\in \mathfrak{C}\left( I\right)
}\sum_{J^{\prime }\in \mathfrak{C}\left( J\right) }\mathfrak{N}_{T^{\alpha
}}^{2}\left\Vert \mathbf{1}_{I^{\prime }}\bigtriangleup _{I}^{\sigma }%
\mathbf{1}_{Q}\right\Vert _{L^{2}\left( \sigma \right) }^{2}\left\Vert 
\mathbf{1}_{J^{\prime }\setminus J_{\lambda }}\bigtriangleup _{J}^{\omega }%
\mathbf{1}_{R}\right\Vert _{L^{2}\left( \omega \right) }^{2} \\
&\leq &C_{\alpha }^{2}\frac{1}{\lambda ^{2}}\mathfrak{A}_{2}^{\alpha
}\left\Vert \mathbf{1}_{Q}\right\Vert _{L^{2}\left( \sigma \right)
}^{2}\left\Vert \mathbf{1}_{R}\right\Vert _{L^{2}\left( \omega \right) }^{2}+%
\sqrt{\lambda }\mathfrak{N}_{T^{\alpha }}^{2}\left\Vert \mathbf{1}%
_{Q}\right\Vert _{L^{2}\left( \sigma \right) }^{2}\left\Vert \mathbf{1}%
_{R}\right\Vert _{L^{2}\left( \omega \right) }^{2}\ ,
\end{eqnarray*}%
as required. Thus the proof of (\ref{I/T}), and hence also that of the Good-$%
\lambda $ Lemma, will be complete once we have proved the estimate (\ref%
{follow est}), to which we now turn.

\begin{remark}
In the third line above we have used the norm inequality $\left\vert
\left\langle T_{\sigma }^{\alpha }f,g\right\rangle _{\omega }\right\vert
\leq \mathfrak{N}_{T^{\alpha }}\left\Vert f\right\Vert _{L^{2}\left( \sigma
\right) }\left\Vert g\right\Vert _{L^{2}\left( \omega \right) }$ with $f=%
\mathbf{1}_{I^{\prime }}\bigtriangleup _{I}^{\sigma }\mathbf{1}_{Q}$ and $g=%
\mathbf{1}_{J^{\prime }\setminus J_{\lambda }}\bigtriangleup _{J}^{\omega }%
\mathbf{1}_{R}$, and where $g$ is a constant multiple of an indicator of a
`rectangle' $J^{\prime }\setminus J_{\lambda }$. This prevents us from using
the smaller bound $\lambda \mathfrak{I}_{T^{\alpha }}^{2}$ in place of $%
\lambda \mathfrak{N}_{T^{\alpha }}^{2}$.
\end{remark}

In order to illuminate the main ideas in the proof of (\ref{follow est}), we
first prove the simplest case of dimension $n=1$. So let 
\begin{equation*}
J\setminus J_{\lambda }=J_{\lambda }^{\limfunc{left}}\cup J_{\lambda }^{%
\limfunc{right}},
\end{equation*}%
where $J_{\lambda }^{\limfunc{left}}=J_{-}\setminus J_{\lambda }$ and $%
J_{\lambda }^{\limfunc{right}}=J_{+}\setminus J_{\lambda }$, and write%
\begin{equation}
\mathbb{E}_{\Omega }\int_{J^{\prime }\setminus J_{\lambda }}\left\vert
\bigtriangleup _{J}^{\omega }\mathbf{1}_{R}\right\vert ^{2}d\omega =\mathbb{E%
}_{\Omega }\int_{J_{\lambda }^{\limfunc{left}}}\left\vert \bigtriangleup
_{J}^{\omega }\mathbf{1}_{R}\right\vert ^{2}d\omega +\mathbb{E}_{\Omega
}\int_{J_{\lambda }^{\limfunc{right}}}\left\vert \bigtriangleup _{J}^{\omega
}\mathbf{1}_{R}\right\vert ^{2}d\omega =Left+Right.  \label{expectations}
\end{equation}

Now we recall the parameterization of the expectation by translations $%
\gamma \in \Gamma _{M}^{N}$ of step size $2^{-M}$, and let $\eta =\lambda
2^{M}$ where $\lambda $ is the side length of the interval $J^{\prime
}\setminus J_{\lambda }$. Then, by using the `average of an average'
principle, we can rewrite the expectation in terms of the larger step size $%
\eta 2^{-M}$. We continue to use $\gamma $ to denote the new step size $\eta
2^{-M}$. Then we further decompose the expectation $Left$ in (\ref%
{expectations}) as%
\begin{eqnarray*}
Left &=&\mathbb{E}_{\Omega }\int_{J_{\lambda }^{\limfunc{left}}}\left\vert
\bigtriangleup _{J}^{\omega }\mathbf{1}_{R}\right\vert ^{2}d\omega =\mathbb{E%
}_{\Omega }\int_{\left( J+\gamma \right) _{\lambda }^{\limfunc{left}%
}}\left\vert \bigtriangleup _{J+\gamma }^{\omega }\mathbf{1}_{R}\right\vert
^{2}d\omega \\
&=&\mathbb{E}_{\Omega }\mathbf{1}_{\left\{ \gamma :\left( J+\gamma \right)
_{\lambda }^{\limfunc{left}}\subset R\right\} }\int_{\left( J+\gamma \right)
_{\lambda }^{\limfunc{left}}}\left\vert \bigtriangleup _{J+\gamma }^{\omega }%
\mathbf{1}_{R}\right\vert ^{2}d\omega \\
&&+\mathbb{E}_{\Omega }\mathbf{1}_{\left\{ \gamma :\left( J+\gamma \right)
_{\lambda }^{\limfunc{left}}\text{ lies to the left of }R\right\}
}\int_{\left( J+\gamma \right) _{\lambda }^{\limfunc{left}}}\left\vert
\bigtriangleup _{J+\gamma }^{\omega }\mathbf{1}_{R}\right\vert ^{2}d\omega \\
&\equiv &A_{3}+A_{4}\ ,
\end{eqnarray*}%
where because of our change of step size, we have that $\left\{ \left(
J+\gamma \right) _{\lambda }^{\limfunc{left}}\right\} _{\gamma }$ is a
pairwise disjoint covering of the top interval containing $J$ that has side
length $2^{-N}$ (see the beginning of Subsection \ref{g/b tech}\ above).

For term $A_{3}$ we use the elementary estimate 
\begin{equation*}
\left\vert \bigtriangleup _{J+\gamma }^{\omega }\mathbf{1}_{R}\right\vert
=\left\vert \mathbb{E}_{\left( J+\gamma \right) _{-}}\mathbf{1}_{R}-\mathbb{E%
}_{\left( J+\gamma \right) }\mathbf{1}_{R}\right\vert \leq 1
\end{equation*}%
together with the estimate in (\ref{hand}), to obtain%
\begin{eqnarray*}
A_{3} &=&\mathbb{E}_{\Omega }\mathbf{1}_{\left\{ \gamma :\left( J+\gamma
\right) _{\lambda }^{\limfunc{left}}\subset R\right\} }\int_{\left( J+\gamma
\right) _{\lambda }^{\limfunc{left}}}\left\vert \bigtriangleup _{J+\gamma
}^{\omega }\mathbf{1}_{R}\right\vert ^{2}d\omega \\
&\leq &\mathbb{E}_{\Omega }\left\vert R\cap \left( J+\gamma \right)
_{\lambda }^{\limfunc{left}}\right\vert _{\omega }\leq C_{\alpha }\lambda
\left\vert R\right\vert _{\omega }\ .
\end{eqnarray*}

For term $A_{4}$ we proceed as follows. We suppose that $\left( J+\gamma
\right) _{\lambda }^{\limfunc{left}}$ lies to the left of $R$, since the
case when $\left( J+\gamma \right) _{\lambda }^{\limfunc{right}}$ lies to
the right of $R$ is similar. We have%
\begin{eqnarray*}
\int_{\left( J+\gamma \right) _{\lambda }^{\limfunc{left}}}\left\vert
\bigtriangleup _{J+\gamma }^{\omega }\mathbf{1}_{R}\right\vert ^{2}d\omega
&=&\int_{\left( J+\gamma \right) _{\lambda }^{\limfunc{left}}}\left\vert 
\mathbb{E}_{\left( J+\gamma \right) _{-}}\mathbf{1}_{R}-\mathbb{E}_{\left(
J+\gamma \right) }\mathbf{1}_{R}\right\vert ^{2}d\omega \\
&=&\int_{\left( J+\gamma \right) _{\lambda }^{\limfunc{left}}}\left\vert 
\frac{\left\vert R\cap \left( J+\gamma \right) _{-}\right\vert _{\omega }}{%
\left\vert \left( J+\gamma \right) _{-}\right\vert _{\omega }}-\frac{%
\left\vert R\cap \left( J+\gamma \right) \right\vert _{\omega }}{\left\vert
J+\gamma \right\vert _{\omega }}\right\vert ^{2}d\omega \\
&\leq &2\left\vert \left( J+\gamma \right) _{\lambda }^{\limfunc{left}%
}\right\vert _{\omega }\left( \frac{\left\vert R\cap \left( J+\gamma \right)
_{-}\right\vert _{\omega }}{\left\vert \left( J+\gamma \right)
_{-}\right\vert _{\omega }}\right) ^{2}+2\left\vert \left( J+\gamma \right)
_{\lambda }^{\limfunc{left}}\right\vert _{\omega }\left( \frac{\left\vert
R\cap \left( J+\gamma \right) \right\vert _{\omega }}{\left\vert J+\gamma
\right\vert _{\omega }}\right) ^{2}\ .
\end{eqnarray*}%
We now estimate the sum of the first terms above since the sum of the second
terms can be estimated with the same argument.

For the sum of the first terms we write%
\begin{eqnarray*}
&&\sum_{\gamma :\ \left( J+\gamma \right) _{\lambda }^{\limfunc{left}}\text{
is left of }R}\left\vert \left( J+\gamma \right) _{\lambda }^{\limfunc{left}%
}\right\vert _{\omega }\left( \frac{\left\vert R\cap \left( J+\gamma \right)
_{-}\right\vert _{\omega }}{\left\vert \left( J+\gamma \right)
_{-}\right\vert _{\omega }}\right) ^{2} \\
&\leq &\left( \sum_{\gamma :\ \left( J+\gamma \right) _{\lambda }^{\limfunc{%
left}}\text{ is left of }R}\frac{\left\vert \left( J+\gamma \right)
_{\lambda }^{\limfunc{left}}\right\vert _{\omega }}{\left\vert \left(
J+\gamma \right) _{-}\right\vert _{\omega }}\frac{\left\vert R\cap \left(
J+\gamma \right) _{-}\right\vert _{\omega }}{\left\vert \left( J+\gamma
\right) _{-}\right\vert _{\omega }}\right) \left\vert R\right\vert _{\omega
}\ ,
\end{eqnarray*}%
and let $J+\gamma _{1}$ be the leftmost translate of $J$ such that%
\begin{equation}
\frac{\left\vert \left( J+\gamma \right) _{\lambda }^{\limfunc{left}%
}\right\vert _{\omega }}{\left\vert \left( J+\gamma \right) _{-}\right\vert
_{\omega }}\frac{\left\vert R\cap \left( J+\gamma \right) _{-}\right\vert
_{\omega }}{\left\vert \left( J+\gamma \right) _{-}\right\vert _{\omega }}%
>\delta ,  \label{bigger than delta}
\end{equation}%
where $\delta >0$ will be chosen later to be $\sqrt{\lambda }$. We suppose
the translations $\gamma $ are ordered to be increasing. Note that we have
both%
\begin{equation*}
1\geq \frac{\left\vert R\cap \left( J+\gamma _{1}\right) _{-}\right\vert
_{\omega }}{\left\vert \left( J+\gamma _{1}\right) _{-}\right\vert _{\omega }%
}>\delta
\end{equation*}%
and 
\begin{eqnarray*}
&&\left( J+\gamma \right) _{\lambda }^{\limfunc{left}}\subset \left(
J+\gamma _{1}\right) _{-}\ , \\
\text{if both }\gamma &>&\gamma _{1}\text{ and }\left( J+\gamma \right)
_{\lambda }^{\limfunc{left}}\text{ is left of }R.
\end{eqnarray*}%
Thus we compute that%
\begin{eqnarray}
&&\mathbb{E}_{\Omega }\int_{\left( J+\gamma \right) _{\lambda }^{\limfunc{%
left}}}\left\vert \bigtriangleup _{J+\gamma }^{\omega }\mathbf{1}%
_{R}\right\vert ^{2}d\omega =\frac{1}{\Lambda }\left\{ \sum_{\gamma <\gamma
_{1}}+\sum_{\gamma >\gamma _{1}:\ \left( J+\gamma \right) _{\lambda }^{%
\limfunc{left}}\text{ is left of }R}\right\} \int_{\left( J+\gamma \right)
_{\lambda }^{\limfunc{left}}}\left\vert \bigtriangleup _{J+\gamma }^{\omega }%
\mathbf{1}_{R}\right\vert ^{2}d\omega  \label{compute} \\
&\leq &\frac{1}{\Lambda }\sum_{\gamma <\gamma _{1}}\frac{\left\vert \left(
J+\gamma \right) _{\lambda }^{\limfunc{left}}\right\vert _{\omega
}\left\vert R\cap \left( J+\gamma \right) _{-}\right\vert _{\omega }}{%
\left\vert \left( J+\gamma \right) _{-}\right\vert _{\omega }^{2}}\left\vert
R\right\vert _{\omega }+\frac{1}{\Lambda }\sum_{\gamma >\gamma _{1}:\ \left(
J+\gamma \right) _{\lambda }^{\limfunc{left}}\text{ is left of }R}\left\vert
\left( J+\gamma \right) _{\lambda }^{\limfunc{left}}\right\vert _{\omega } 
\notag \\
&\leq &\frac{1}{\Lambda }\delta \ \#\left\{ \gamma <\gamma _{1}\right\}
\left\vert R\right\vert _{\omega }+\frac{1}{\Lambda }\left\vert \left(
J+\gamma _{1}\right) _{-}\right\vert _{\omega }\leq \delta \left\vert
R\right\vert _{\omega }+\frac{1}{\Lambda }\frac{1}{\delta }\left\vert R\cap
\left( J+\gamma _{1}\right) _{-}\right\vert _{\omega }  \notag \\
&\leq &\left( \delta +\frac{\lambda }{\delta }\right) \left\vert
R\right\vert _{\omega }=2\sqrt{\lambda }\left\vert R\right\vert _{\omega }, 
\notag
\end{eqnarray}%
if we choose $\delta =\sqrt{\lambda }$. This completes the proof of (\ref%
{follow est}) in dimension $n=1$.

\subsubsection{Higher dimensions}

In the case of $n>1$ dimensions we decompose the `corner-like' pieces $%
J^{\prime }\setminus J_{\lambda }$ for each child $J^{\prime }\in \mathfrak{C%
}\left( J\right) $ into faces $S+\gamma $ of width $\lambda $ (when $n=1$
there are only two such faces $S+\gamma $, namely the intervals $\left(
J+\gamma \right) _{\lambda }^{\limfunc{left}}$ and $\left( J+\gamma \right)
_{\lambda }^{\limfunc{right}}$). Then we apply the above argument for $%
\left( J+\gamma \right) _{\lambda }^{\limfunc{left}}$ to $S+\gamma $ for
each face $S$ of width $\lambda $ in $J^{\prime }\setminus J_{\lambda }$,
but using only translations perpendicular to the face $S$, and finally apply
the `average of an average' principle, to obtain (\ref{follow est}). We
illustrate the proof in the case $n=2$ since the general case $n\geq 2$ is
no different.

For a square $K$ in the plane, let $K_{-}$ denote the lower left child of $K$%
. Now fix squares $J$ and $R$ in the plane with $\mathbf{\rho }$-comparable
side lengths and such that $J\subset 3R$. For $\gamma \in \mathcal{H}%
_{\lambda }$, where $\mathcal{H}_{\lambda }$ is the set of \emph{horizontal}
translations $\gamma $ of step size $\lambda $ with $\left\vert \gamma
\right\vert \leq C\ell \left( R\right) $, denote by $\left( J+\gamma \right)
_{\lambda }^{\limfunc{lower}\limfunc{left}}$ the $L$-shaped `corner'%
\begin{equation*}
\left( J+\gamma \right) _{\lambda }^{\limfunc{lower}\limfunc{left}}\equiv
\left( J+\gamma \right) _{-}\setminus J_{\lambda }\ ,
\end{equation*}%
and by $\left( J+\gamma \right) _{\lambda }^{\limfunc{left}}$ the \emph{%
vertical} portion of the $L$-shaped set $\left( J+\gamma \right) _{\lambda
}^{\limfunc{lower}\limfunc{left}}$ (this is one of the faces $S+\gamma $
introduced above). We will show that%
\begin{equation}
\frac{1}{\#\mathcal{H}_{\lambda }}\sum_{\gamma \in \mathcal{H}_{\lambda
}}\int_{\left( J+\gamma \right) _{\lambda }^{\limfunc{left}}}\left\vert
\bigtriangleup _{J+\gamma }^{\omega }\mathbf{1}_{R}\right\vert ^{2}d\omega
\lesssim \sqrt{\lambda },  \label{horizontal}
\end{equation}%
where $\#\mathcal{H}_{\lambda }\approx \frac{C\ell \left( R\right) }{\lambda 
}$, and then by the `average of an average' principle we obtain (\ref{follow
est}). To prove (\ref{horizontal}) we will apply the one-dimensional
argument from the previous subsubsection, but with modifications to
accommodate the fact that $\left( J+\gamma \right) _{\lambda }^{\limfunc{left%
}}$ can now spill out over the top of $R$ as well as to the left of $R$
(recall that in the one-dimensional setting, $\left( J+\gamma \right)
_{\lambda }^{\limfunc{left}}$ occurred to the left of the interval $R$ if it
was not contained in $R$). As in dimension $n=1$, let $J+\gamma _{1}$ be the
leftmost horizontal translate of $J$ such that%
\begin{equation}
\frac{\left\vert \left( J+\gamma \right) _{\lambda }^{\limfunc{left}%
}\right\vert _{\omega }}{\left\vert \left( J+\gamma \right) _{-}\right\vert
_{\omega }}\frac{\left\vert R\cap \left( J+\gamma \right) _{-}\right\vert
_{\omega }}{\left\vert \left( J+\gamma \right) _{-}\right\vert _{\omega }}%
>\delta ,  \label{bigger than delta n}
\end{equation}%
so that we have%
\begin{equation*}
1\geq \frac{\left\vert R\cap \left( J+\gamma _{1}\right) _{-}\right\vert
_{\omega }}{\left\vert \left( J+\gamma _{1}\right) _{-}\right\vert _{\omega }%
}>\delta .
\end{equation*}

Then with notation analogous to the case $n=1$ we have a similar calculation
to that in (\ref{compute}):%
\begin{eqnarray*}
&&\frac{1}{\Lambda }\left\{ \sum_{\gamma <\gamma _{1}}+\sum_{\gamma >\gamma
_{1}:\ \left( J+\gamma \right) _{\lambda }^{\limfunc{left}}\subset \left(
J+\gamma _{1}\right) _{-}}\right\} \int_{\left( J+\gamma \right) _{\lambda
}^{\limfunc{left}}}\left\vert \bigtriangleup _{J+\gamma }^{\omega }\mathbf{1}%
_{R}\right\vert ^{2}d\omega \\
&\leq &\frac{1}{\Lambda }\sum_{\gamma <\gamma _{1}}\frac{\left\vert \left(
J+\gamma \right) _{\lambda }^{\limfunc{left}}\right\vert _{\omega
}\left\vert R\cap \left( J+\gamma \right) _{-}\right\vert _{\omega }}{%
\left\vert \left( J+\gamma \right) _{-}\right\vert _{\omega }^{2}}\left\vert
R\right\vert _{\omega }+\frac{1}{\Lambda }\sum_{\gamma >\gamma _{1}:\ \left(
J+\gamma \right) _{\lambda }^{\limfunc{left}}\subset \left( J+\gamma
_{1}\right) _{-}}\left\vert \left( J+\gamma \right) _{\lambda }^{\limfunc{%
left}}\right\vert _{\omega } \\
&\leq &\frac{1}{\Lambda }\delta \ \#\left\{ \gamma <\gamma _{1}\right\}
\left\vert R\right\vert _{\omega }+\frac{1}{\Lambda }\left\vert \left(
J+\gamma _{1}\right) _{-}\right\vert _{\omega }\leq \delta \left\vert
R\right\vert _{\omega }+\frac{1}{\Lambda }\frac{1}{\delta }\left\vert R\cap
\left( J+\gamma _{1}\right) _{-}\right\vert _{\omega } \\
&\leq &\left( \delta +\frac{\lambda }{\delta }\right) \left\vert
R\right\vert _{\omega }=2\sqrt{\lambda }\left\vert R\right\vert _{\omega },
\end{eqnarray*}%
if we choose $\delta =\sqrt{\lambda }$. Thus we have so far successfully
estimated the sum over translations $\gamma $ that satisfy either $\gamma
<\gamma _{1}$ or $\left( J+\gamma \right) _{\lambda }^{\limfunc{left}%
}\subset \left( J+\gamma _{1}\right) _{-}$.

Now we simply repeat the last step considering only the remaining horizontal
translations. Since the side lengths of $J$ and $R$ are comparable, there
are at most a fixed number of such steps left, and adding up the results,
and using the `average of an average' principle, then gives%
\begin{equation*}
\mathbb{E}_{\Omega }\int_{\left( J+\gamma \right) _{\lambda }^{\limfunc{left}%
}}\left\vert \bigtriangleup _{J+\gamma }^{\omega }\mathbf{1}_{R}\right\vert
^{2}d\omega \leq C_{\alpha }\sqrt{\lambda }.
\end{equation*}%
This completes the proof of (\ref{follow est}) in the case of dimension $n=2$%
, and as mentioned earlier, the above two-dimensional argument easily adapts
to the case $n\geq 3$.

\section{Appendix}

We assume notation as above. Define the bilinear form%
\begin{equation*}
\mathcal{B}\left( f,g\right) \equiv \left\langle T_{\sigma }^{\alpha
}f,g\right\rangle _{\omega }\ ,\ \ \ \ \ f\in L^{2}\left( \sigma \right)
,g\in L^{2}\left( \omega \right) ,
\end{equation*}%
restricted to functions $f$ and $g$ of compact support and mean zero. For
each dyadic grid $\mathcal{D}$ we then have%
\begin{equation*}
\mathcal{B}\left( f,g\right) =\sum_{I,J\in \mathcal{D}\text{ }}\left\langle
T_{\sigma }^{\alpha }\bigtriangleup _{I}^{\sigma }f,\bigtriangleup
_{J}^{\omega }g\right\rangle _{\omega }\ .
\end{equation*}%
Now define the bilinear forms%
\begin{equation*}
\mathcal{C}_{\mathcal{D}}\left( f,g\right) =\sum_{I,J\in \mathcal{D}:\ I%
\text{ and }J\text{ are }\mathbf{r}\text{-close }}\left\langle T_{\sigma
}^{\alpha }\bigtriangleup _{I}^{\sigma }f,\bigtriangleup _{J}^{\omega
}g\right\rangle _{\omega }\ ,\ \ \ \ \ f\in L^{2}\left( \sigma \right) ,g\in
L^{2}\left( \omega \right) .
\end{equation*}%
Thus the form $\mathcal{C}_{\mathcal{D}}\left( f,g\right) $ sums over those
pairs of cubes in the grid $\mathcal{D}$ that are close in both scale and
position, these being the only pairs where the need for a weak boundedness
property traditionally arises. We also consider the subbilinear form%
\begin{equation*}
\mathcal{S}_{\mathcal{D}}\left( f,g\right) =\sum_{I,J\in \mathcal{D}:\ I%
\text{ and }J\text{ are }\mathbf{r}\text{-close }}\left\vert \left\langle
T_{\sigma }^{\alpha }\bigtriangleup _{I}^{\sigma }f,\bigtriangleup
_{J}^{\omega }g\right\rangle _{\omega }\right\vert \ ,\ \ \ \ \ f\in
L^{2}\left( \sigma \right) ,g\in L^{2}\left( \omega \right) ,
\end{equation*}%
which dominates $\mathcal{C}_{\mathcal{D}}\left( f,g\right) $, i.e. $%
\left\vert \mathcal{C}_{\mathcal{D}}\left( f,g\right) \right\vert \leq 
\mathcal{S}_{\mathcal{D}}\left( f,g\right) $ for all $f\in L^{2}\left(
\sigma \right) ,g\in L^{2}\left( \omega \right) $. The main results above
can be organized into the following two part theorem.

\begin{theorem}
With notation as above, we have:

\begin{enumerate}
\item For $f$ and $g$ of compact support and mean zero, 
\begin{eqnarray*}
&&\mathbb{E}_{\Omega }\left\vert \mathcal{B}\left( f,g\right) -\mathcal{C}_{%
\mathcal{D}}\left( f,g\right) \right\vert \\
&\leq &C_{\alpha }\left( \sqrt{\mathfrak{A}_{2}^{\alpha }}+\mathfrak{T}%
_{T^{\alpha }}+\mathfrak{T}_{T^{\alpha }}^{\ast }+\mathcal{E}_{\alpha }^{%
\limfunc{strong}}+\mathcal{E}_{\alpha }^{\limfunc{strong},\ast
}+2^{-\varepsilon \mathbf{r}}\mathfrak{N}_{T^{\alpha }}\right) \left\Vert
f\right\Vert _{L^{2}\left( \sigma \right) }\left\Vert g\right\Vert
_{L^{2}\left( \omega \right) } \\
&&\ \ \ \ \ \ \ \ \ \ \ \ \ \ \ \ \ \ \ \ +C_{\alpha }\mathbb{E}_{\Omega }%
\mathcal{S}_{\mathcal{D}}\left( f,g\right) .
\end{eqnarray*}

\item For $f$ and $g$ of compact support and mean zero, and for $0<\lambda <%
\frac{1}{2}$,%
\begin{equation*}
\mathbb{E}_{\Omega }\mathcal{S}_{\mathcal{D}}\left( f,g\right) \leq
C_{\alpha }\left( \frac{1}{\lambda }\sqrt{\mathfrak{A}_{2}^{\alpha }}+%
\mathfrak{T}_{T^{\alpha }}+\mathfrak{T}_{T^{\alpha }}^{\ast }+\sqrt[4]{%
\lambda }\mathfrak{N}_{T^{\alpha }}\right) \left\Vert f\right\Vert
_{L^{2}\left( \sigma \right) }\left\Vert g\right\Vert _{L^{2}\left( \omega
\right) }.
\end{equation*}
\end{enumerate}
\end{theorem}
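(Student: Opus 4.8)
The plan is to prove the two parts separately: part~(1) is a reorganization of the known $\mathcal{NTV}$-type argument of \cite{SaShUr6} together with a good/bad reduction, while part~(2) is the genuinely new good-$\lambda$ estimate obtained via NTV surgery. For part~(1), I would first run the standard good/bad reduction relative to the grid $\mathcal{D}$: replacing $f,g$ by their $\left(\mathbf{r},\varepsilon\right)$-good parts $f_{\limfunc{good}},g_{\limfunc{good}}$ changes the bilinear form by an amount whose $\mathbb{E}_{\Omega}$-average is at most $C\,2^{-\varepsilon\mathbf{r}}\mathfrak{N}_{T^{\alpha}}\left\Vert f\right\Vert_{L^{2}\left(\sigma\right)}\left\Vert g\right\Vert_{L^{2}\left(\omega\right)}$, by the conditional probability estimate~(\ref{cond prob}) and the control of bad Haar projections. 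I would then apply the full bilinear decomposition of the main theorem of \cite{SaShUr6}, the schematic diagram reproduced in the excerpt, to $\mathcal{B}\left(f_{\limfunc{good}},g_{\limfunc{good}}\right)$. The crucial structural input, already isolated in the excerpt, is that the weak boundedness property enters the proof of \cite{SaShUr6} \emph{only} through the touching children arising from $\mathbf{r}$-close pairs of cubes (inside the forms $\mathsf{B}_{\diagup}$ and $\mathsf{T}_{\limfunc{far}\limfunc{below}}^{2}$); every other form in the diagram is therefore bounded by $\sqrt{\mathfrak{A}_{2}^{\alpha}}+\mathfrak{T}_{T^{\alpha}}+\mathfrak{T}_{T^{\alpha}}^{\ast}+\mathcal{E}_{\alpha}^{\limfunc{strong}}+\mathcal{E}_{\alpha}^{\limfunc{strong},\ast}$, which is exactly the remainder estimate~(\ref{remainder est}) and is the deepest ingredient cited. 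The $\mathbf{r}$-close contributions not already accounted for by $\mathcal{C}_{\mathcal{D}}$ are dominated termwise, in absolute value, by $\mathcal{S}_{\mathcal{D}}$. Taking $\mathbb{E}_{\Omega}$ and collecting the three sources --- good/bad error, remainder estimate, and $\mathbf{r}$-close leftover --- gives~(1).

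For part~(2), fix the grid $\mathcal{D}$ and expand each $\mathbf{r}$-close inner product $\left\langle T_{\sigma}^{\alpha}\bigtriangleup_{I}^{\sigma}f,\bigtriangleup_{J}^{\omega}g\right\rangle_{\omega}$ using that $\bigtriangleup_{I}^{\sigma}f$ is constant on each child $I^{\prime}\in\mathfrak{C}\left(I\right)$ and $\bigtriangleup_{J}^{\omega}g$ is constant on each child $J^{\prime}\in\mathfrak{C}\left(J\right)$; after pulling out the Haar coefficients this reduces matters to the indicator inner products $\left\langle T_{\sigma}^{\alpha}\mathbf{1}_{I^{\prime}},\mathbf{1}_{J^{\prime}}\right\rangle_{\omega}$. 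When $I^{\prime}$ and $J^{\prime}$ overlap I would use the quasitesting conditions, and when their closures are disjoint the Muckenhoupt conditions $\mathfrak{A}_{2}^{\alpha}$; the essential remaining case is when $I^{\prime}$ and $J^{\prime}$ \emph{touch}. There I would run NTV surgery as in the main-text proof of~(\ref{I/T}): split $\mathbf{1}_{J^{\prime}}=\mathbf{1}_{J^{\prime}\cap J_{\lambda}}+\mathbf{1}_{J^{\prime}\setminus J_{\lambda}}$ with $J_{\lambda}=\left\{x\in J:\limfunc{dist}\left(x,\partial J\right)>\lambda\ell\left(J\right)\right\}$, bound the interior piece using that $I^{\prime}$ and $J_{\lambda}$ are separated by $\gtrsim\lambda\ell\left(J\right)$ --- which contributes the factor $\tfrac{1}{\lambda}\sqrt{\mathfrak{A}_{2}^{\alpha}}$ --- and bound the collar piece by $\mathfrak{N}_{T^{\alpha}}$ times the square root of the $\omega$-mass of the collar. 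Realizing $\Omega$ as translations of a fixed grid and invoking the key collar estimate~(\ref{follow est}), $\mathbb{E}_{\Omega}\int_{J^{\prime}\setminus J_{\lambda}}\left\vert\bigtriangleup_{J}^{\omega}\mathbf{1}_{R}\right\vert^{2}d\omega\leq C_{\alpha}\sqrt{\lambda}\left\vert R\right\vert_{\omega}$, together with its routine extension to general $g$, then produces, after a square root, the factor $\sqrt[4]{\lambda}\,\mathfrak{N}_{T^{\alpha}}$. Finally the summation over $\mathbf{r}$-close pairs $\left(I,J\right)$ and over children is carried out by Cauchy--Schwarz, using that the $\mathbf{r}$-close relation has bounded overlap together with the Bessel identities $\sum_{I}\left\Vert\bigtriangleup_{I}^{\sigma}f\right\Vert_{L^{2}\left(\sigma\right)}^{2}=\left\Vert f\right\Vert_{L^{2}\left(\sigma\right)}^{2}$ and $\sum_{J}\left\Vert\bigtriangleup_{J}^{\omega}g\right\Vert_{L^{2}\left(\omega\right)}^{2}=\left\Vert g\right\Vert_{L^{2}\left(\omega\right)}^{2}$.

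The step I expect to be the main obstacle is the collar estimate~(\ref{follow est}) itself: one cannot take the expectation directly because the cube $J$ appears both in the region $J^{\prime}\setminus J_{\lambda}$ over which one integrates and inside the Haar projection $\bigtriangleup_{J}^{\omega}\mathbf{1}_{R}$, so the two factors are correlated under the translation average. The remedy is a stopping-time argument: after passing, by the ``average of an average'' principle, to translation steps of size comparable to $\lambda\ell\left(J\right)$, order the translates $J+\gamma$ and let $J+\gamma_{1}$ be the first one at which a normalized product of $\omega$-mass ratios exceeds a threshold $\delta$; split the average at $\gamma_{1}$, estimate the two resulting pieces by $\delta\left\vert R\right\vert_{\omega}$ and $\tfrac{\lambda}{\delta}\left\vert R\right\vert_{\omega}$ respectively, and optimize by choosing $\delta=\sqrt{\lambda}$ --- this is precisely where the gain degrades from $\lambda$ to $\sqrt{\lambda}$, hence to $\sqrt[4]{\lambda}$ after the square root. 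In dimensions $n\geq2$ one must additionally decompose the ``corner'' $J^{\prime}\setminus J_{\lambda}$ into faces of width $\lambda$, run the one-dimensional stopping argument on each face using only the translations perpendicular to that face, and reassemble by the ``average of an average'' principle; this is routine once the one-dimensional case is settled.
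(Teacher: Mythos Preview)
Your proposal is essentially correct and tracks the paper's own argument closely. Part~(1) is exactly the paper's organization: good/bad reduction yields the $2^{-\varepsilon\mathbf{r}}\mathfrak{N}_{T^{\alpha}}$ term, the remainder estimate~(\ref{remainder est}) handles everything except the $\mathbf{r}$-close pairs, and those leftover close-pair terms are absorbed into $\mathcal{S}_{\mathcal{D}}$. For part~(2) you correctly identify the NTV surgery decomposition, the separation/testing/$\mathfrak{A}_{2}^{\alpha}$ trichotomy on children, and the stopping-time mechanism behind~(\ref{follow est}) that degrades the gain from $\lambda$ to $\sqrt{\lambda}$.

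One point deserves more care: your phrase ``routine extension to general $g$'' for~(\ref{follow est}) glosses over a genuine subtlety. The paper's stopping argument is tailored to $g=\mathbf{1}_{R}$ with $R$ \emph{fixed independently of the random grid}: it splits according to whether the translated collar $(J+\gamma)_{\lambda}^{\limfunc{left}}$ lies inside or outside $R$, and in the ``inside'' case uses the pointwise bound $\left\vert\bigtriangleup_{J+\gamma}^{\omega}\mathbf{1}_{R}\right\vert\leq 1$. Neither the inside/outside dichotomy nor this pointwise bound is available for general $g\in L^{2}(\omega)$, and the correlation between $\left\Vert\bigtriangleup_{J}^{\omega}g\right\Vert$ and the collar mass $\left\vert J^{\prime}\setminus J_{\lambda}\right\vert_{\omega}$ (both grid-dependent) is precisely what the stopping argument is designed to break for indicators. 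In fact the paper's Section~3 carries out the argument only for $f=\mathbf{1}_{Q}$, $g=\mathbf{1}_{R}$ --- exploiting the three critical reductions and then an \emph{elimination-of-small-pairs} step that introduces a second independent family of grids, which you do not invoke --- and the Appendix theorem's general-$f,g$ statement is presented as a ``reorganization'' rather than separately proved. So your proposal matches the paper's level of detail, but if you were to make the general-$g$ step rigorous you would need either to reduce to indicators of fixed cubes (as the paper effectively does for its main application) or to give a nontrivial adaptation of the stopping argument.
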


The reason for emphasizing the two estimates in this way, is that a
different proof strategy might produce a \emph{different} bound for $\mathbb{%
E}_{\Omega }\left\vert \mathcal{B}\left( f,g\right) -\mathcal{C}_{\mathcal{D}%
}\left( f,g\right) \right\vert $, which can then be combined with the bound
for $\mathbb{E}_{\Omega }\mathcal{S}_{\mathcal{D}}\left( f,g\right) $ to
control $\left\vert \mathcal{B}\left( f,g\right) \right\vert $. Note also
that the term $C_{\alpha }\mathbb{E}_{\Omega }\mathcal{S}_{\mathcal{D}%
}\left( f,g\right) $ is included in part (1) of the theorem, to allow for
some of the inner products in the definition of $\mathcal{C}_{\mathcal{D}%
}\left( f,g\right) $ to be added back into the form $\mathcal{B}\left(
f,g\right) -\mathcal{C}_{\mathcal{D}}\left( f,g\right) $ during the course
of the proof of estimate (1). Indeed, this was done when controlling the
form $\mathsf{T}_{\limfunc{far}\limfunc{below}}^{2}\left( f,g\right) $ above.


\begin{thebibliography}{LaSaShUrWi}
\bibitem[AsGo]{AsGo} \textsc{K. Astala and M. J. Gonzalez,}\textit{Chord-arc
curves and the Beurling transform}, Invent. Math. \textbf{205} (2016), no.
1, 57-81.

\bibitem[AsZi]{AsZi} \textsc{K. Astala and M. Zinsmeister,} \textit{Teichm%
\"{u}ller spaces and }$BMOA$\textit{,} Math. Ann. \textbf{289} (1991), no.
4, 613--625.

\bibitem[CoFe]{CoFe} \textsc{R. R. Coifman and C. L. Fefferman,} \textit{%
Weighted norm inequalities for maximal functions and singular integrals,}
Studia Math. \textbf{51} (1974), 241-250.

\bibitem[FeSt]{FeSt} \textsc{C. L. Fefferman and E. M. Stein,} \textit{Some
maximal inequalities, }Amer. J. Math. \textbf{93} (1971), 107-115.

\bibitem[HuMuWh]{HuMuWh} \textsc{R. Hunt, B. Muckenhoupt} \textsc{and R. L.
Wheeden,} \textit{Weighted norm inequalities for the conjugate function and
the Hilbert transform}, Trans. Amer. Math. Soc. \textbf{176} (1973), 227-251.

\bibitem[Hyt2]{Hyt2} \textsc{Hyt\"{o}nen, Tuomas, }\textit{The two weight
inequality for the Hilbert transform with general measures, \texttt{%
arXiv:1312.0843v2}.}

\bibitem[HyMa]{HyMa} \textsc{Hyt\"{o}nen, Tuomas and H. Martikainen,} 
\textit{On general local }$Tb$\textit{\ theorems}, \texttt{arXiv:1011.0642v1.%
}

\bibitem[IwMa]{IwMa} \textsc{T. Iwaniec and G. Martin,} \textit{%
Quasiconformal mappings and capacity,} Indiana Univ. Math. J. \textbf{40}
(1991), no. 1, 101--122.

\bibitem[Lac]{Lac} \textsc{Lacey, Michael T.,}\textit{\ Two weight
inequality for the Hilbert transform: A real variable characterization, II},
Duke Math. J. Volume \textbf{163}, Number 15 (2014), 2821-2840.

\bibitem[LaMa]{LaMa} \textsc{M. T. Lacey and H. Martikainen,} \textit{Local }%
$Tb$\textit{\ theorem with }$L^{2}$\textit{\ testing conditions and general
measures: Calder\'{o}n--Zygmund operators}, \texttt{arXiv:1310.08531v1.}

\bibitem[LaSaUr]{LaSaUr} \textsc{Lacey, Michael T., Sawyer, Eric T.,
Uriarte-Tuero, Ignacio,} \textit{Astala's conjecture on distortion of
Hausdorff measures under quasiconformal maps in the plane, }Acta Math. 
\textbf{204} (2010), 273-292.

\bibitem[LaSaUr2]{LaSaUr2} \textsc{Lacey, Michael T., Sawyer, Eric T.,
Uriarte-Tuero, Ignacio,} \textit{A Two Weight Inequality for the Hilbert
transform assuming an energy hypothesis, } Journal of Functional Analysis,
Volume \textbf{263} (2012), Issue 2, 305-363.

\bibitem[LaSaShUr3]{LaSaShUr3} \textsc{Lacey, Michael T., Sawyer, Eric T.,
Shen, Chun-Yen, Uriarte-Tuero, Ignacio,} \textit{Two weight inequality for
the Hilbert transform: A real variable characterization I}, Duke Math. J,
Volume \textbf{163}, Number 15 (2014), 2795-2820.

\bibitem[LaSaShUrWi]{LaSaShUrWi} \textsc{Lacey, Michael T., Sawyer, Eric T.,
Shen, Chun-Yen, Uriarte-Tuero, Ignacio, Wick, Brett D.,} \textit{Two weight
inequalities for the Cauchy transform from }$\mathbb{R}$ to $\mathbb{C}_{+}$%
, \textit{\texttt{arXiv:1310.4820v4}}.

\bibitem[LaWi]{LaWi} \textsc{Lacey, Michael T., Wick, Brett D.,} \textit{Two
weight inequalities for Riesz transforms: uniformly full dimension weights}, 
\textit{\texttt{arXiv:1312.6163v1,v2,v3}}.

\bibitem[NTV1]{NTV1} \textsc{F. Nazarov, S. Treil and A. Volberg,} \textit{%
The Bellman function and two weight inequalities for Haar multipliers}, J.
Amer. Math. Soc. \textbf{12} (1999), 909-928, MR\{1685781 (2000k:42009)\}.

\bibitem[NTV2]{NTV2} \textsc{Nazarov, F., Treil, S. and Volberg, A.,} 
\textit{The }$Tb$\textit{-theorem on non-homogeneous spaces,} Acta Math. 
\textbf{190} (2003), no. 2, MR 1998349 (2005d:30053).

\bibitem[NTV3]{NTV} \textsc{Nazarov, F., Treil, S., and Volberg, A.,} 
\textit{Accretive system }$Tb$\textit{-theorems on nonhomogeneous spaces},
Duke Math. J. \textbf{113} (2) (2002), 259--312.

\bibitem[NTV4]{NTV3} \textsc{F. Nazarov, S. Treil and A. Volberg,} \textit{%
Two weight estimate for the Hilbert transform and corona decomposition for
non-doubling measures}, preprint (2004) \texttt{arxiv:1003.1596}

\bibitem[NaVo]{NaVo} \textsc{F. Nazarov and A. Volberg,} \textit{The Bellman
function, the two weight Hilbert transform, and the embeddings of the model
space }$K_{\theta }$, J. d'Analyse Math. \textbf{87} (2002), 385-414.

\bibitem[NiTr]{NiTr} \textsc{N. Nikolski and S. Treil,} \textit{Linear
resolvent growth of rank one perturbation of a unitary operator does not
imply its similarity to a normal operator}, J. Anal. Math. 87 (2002),
415--431. MR1945291.

\bibitem[PeVoYu]{PeVoYu} \textsc{F. Peherstorfer, A. Volberg and P.
Yuditskii,} \textit{CMV matrices with asymptotically constant coefficients,
Szeg\"{o}--Blaschke class, Scattering Theory,} J. Funct. Anal. \textbf{256}
(2009), no. 7, 2157--2210.

\bibitem[PeVoYu1]{PeVoYu1} \textsc{F. Peherstorfer, A. Volberg and P.
Yuditskii,} \textit{Two-weight Hilbert transform and Lipschitz property of
Jacobi matrices associated to Hyperbolic polynomials,} J. Funct. Anal. 
\textbf{246} (2007), 1--30.

\bibitem[Saw1]{Saw1} \textsc{E. Sawyer,} \textit{A characterization of a
two-weight norm inequality for maximal operators}, Studia Math. \textbf{75}
(1982), 1-11, MR\{676801 (84i:42032)\}.

\bibitem[Saw]{Saw3} \textsc{E. Sawyer,} \textit{A characterization of two
weight norm inequalities for fractional and Poisson integrals}, Trans.
A.M.S. \textbf{308} (1988), 533-545, MR\{930072 (89d:26009)\}.

\bibitem[SaShUr5]{SaShUr5} \textsc{Sawyer, Eric T., Shen, Chun-Yen,
Uriarte-Tuero, Ignacio,} A \textit{two weight theorem for }$\alpha $\textit{%
-fractional singular integrals with an energy side condition and quasicube
testing}, \texttt{arXiv:1302.5093v10.}

\bibitem[SaShUr6]{SaShUr6} \textsc{Sawyer, Eric T., Shen, Chun-Yen,
Uriarte-Tuero, Ignacio,} A \textit{two weight theorem for }$\alpha $\textit{%
-fractional singular integrals with an energy side condition, quasicube
testing and common point masses}, \texttt{arXiv:1505.07816v2,v3.}

\bibitem[SaShUr7]{SaShUr7} \textsc{Sawyer, Eric T., Shen, Chun-Yen,
Uriarte-Tuero, Ignacio,} A \textit{two weight theorem for }$\alpha $\textit{%
-fractional singular integrals with an energy side condition}, Revista Mat.
Iberoam. \textbf{32} (2016), no. 1, 79-174.

\bibitem[SaShUr8]{SaShUr8} \textsc{Sawyer, Eric T., Shen, Chun-Yen,
Uriarte-Tuero, Ignacio,} The \textit{two weight }$T1$ \textit{theorem for
fractional Riesz transforms when one measure is supported on a curve}, 
\texttt{arXiv:1505.07822v4}.

\bibitem[SaShUr9]{SaShUr9} \textsc{Sawyer, Eric T., Shen, Chun-Yen,
Uriarte-Tuero, Ignacio,} A \textit{two weight fractional singular integral
theorem with side conditions, energy and }$k$\textit{-energy dispersed,} 
\texttt{arXiv:1603.04332v2}.

\bibitem[SaShUr10]{SaShUr10} \textsc{Eric T. Sawyer, Chun-Yen Shen and
Ignacio Uriarte-Tuero,} \textit{Failure of necessity of the energy condition}%
, \texttt{arXiv:16072.06071v2.}

\bibitem[Ste]{Ste} \textsc{E. M. Stein,} \textit{Harmonic Analysis:
real-variable methods, orthogonality, and oscillatory integrals},\textit{\ }%
Princeton University Press, Princeton, N. J., 1993.

\bibitem[Vol]{Vol} \textsc{A. Volberg,} \textit{Calder\'{o}n-Zygmund
capacities and operators on nonhomogeneous spaces,} CBMS Regional Conference
Series in Mathematics (2003), MR\{2019058 (2005c:42015)\}.

\bibitem[VoYu]{VoYu} \textsc{A. Volberg and P. Yuditskii,} \textit{On the
inverse scattering problem for Jacobi matrices with the spectrum on an
interval, a finite system of intervals or a Cantor set of positive length,}
Comm. Math. Phys. \textbf{226} (2002), 567--605.
\end{thebibliography}
\end{document}